\newtheorem{theorem}{Theorem}[section]
\newtheorem{lemma}[theorem]{Lemma}
\newtheorem{proposition}[theorem]{Proposition}
\newtheorem{corollary}[theorem]{Corollary}
\theoremstyle{definition}
\newtheorem{definition}[theorem]{Definition}
\newtheorem{example}[theorem]{Example}
\theoremstyle{remark}
\newtheorem{remark}[theorem]{Remark}
\numberwithin{equation}{section}
\begin{document}
\setcounter{page}{1}

\title[ Diffusion equations on graded Lie groups]{Analytic functional calculus and G\r{a}rding inequality on graded Lie groups with applications to diffusion equations}

\author[D. Cardona]{Duv\'an Cardona}
\address{
  Duv\'an Cardona:
  \endgraf
  Department of Mathematics: Analysis, Logic and Discrete Mathematics
  \endgraf
  Ghent University, Belgium
  \endgraf
  {\it E-mail address} {\rm duvanc306@gmail.com, duvan.cardonasanchez@ugent.be}
  }
  
  \author[J. Delgado]{Julio Delgado}
\address{
  Julio Delgado:
  \endgraf
  Departmento de Matem\'aticas
  \endgraf
  Universidad del Valle
  \endgraf
  Cali-Colombia
    \endgraf
    {\it E-mail address} {\rm delgado.julio@correounivalle.edu.co}
  }

\author[M. Ruzhansky]{Michael Ruzhansky}
\address{
  Michael Ruzhansky:
  \endgraf
  Department of Mathematics: Analysis, Logic and Discrete Mathematics
  \endgraf
  Ghent University, Belgium
  \endgraf
 and
  \endgraf
  School of Mathematical Sciences
  \endgraf
  Queen Mary University of London
  \endgraf
  United Kingdom
  \endgraf
  {\it E-mail address} {\rm michael.ruzhansky@ugent.be, m.ruzhansky@qmul.ac.uk}
  }

\thanks{The authors are supported  by the FWO  Odysseus  1  grant  G.0H94.18N:  Analysis  and  Partial Differential Equations and by the Methusalem programme of the Ghent University Special Research Fund (BOF)
(Grant number 01M01021). Julio Delgado is also supported by Vice. Inv. Universidad del Valle Grant CI-71281,  MathAmSud and Minciencias-Colombia under the project MATHAMSUD 21-MATH-03.  Michael Ruzhansky is also supported  by EPSRC grant 
EP/R003025/2.
}

\begin{abstract}In this paper we study the Cauchy problem for diffusion equations associated to a class of strongly hypoelliptic pseudo-differential operators on graded Lie groups. To do so, we develop a global complex functional calculus on graded Lie groups in order to analyse the corresponding energy estimates. One of the main aspects of this complex functional calculus is that for the $(\rho,\delta)$-Euclidean H\"ormander classes we recover the standard functional calculus developed by Seeley \cite{seeley}. In consequence the G\r{a}rding inequality that we prove for arbitrary graded Lie groups absorbs the historical 1953's inequality due to G\r{a}rding \cite{Garding1953}.
\end{abstract} \maketitle

\tableofcontents
\allowdisplaybreaks
\section{Introduction}
Let $G$ be a graded Lie group. In this work we study the well-posedness  for the Cauchy problem
\begin{equation}\label{PVI:Intro}(\textnormal{IVP}): \begin{cases}\frac{\partial v}{\partial t}=K(t,x,D)v+f ,& \text{ }v\in \mathscr{D}'((0,T)\times G), 
\\v(0)=u_0, & \text{ } \end{cases}
\end{equation}
associated to a continuous time-dependent family of strongly hypoelliptic pseudo-differential operators $K(t)=K(t,x,D),$ and within the scale of suitable Sobolev spaces \cite{FischerRuzhansky2017}. 

This general framework allows us to consider relevant  examples in the setting of nilpotent Lie groups (e.g. the Heisenberg group and any stratified Lie group). Indeed,  here we consider the situation for a very general class of pseudo-differential operators $K(t)$ in the global  H\"ormander classes on graded Lie groups as developed in \cite{FischerRuzhansky2014',FischerRuzhanskyBook2015}  allowing $K(t) $ to be a hypoelliptic operator of arbitrary order  that cannot be treated with the usual H\"ormander classes defined by localisations (or also with more classical classes).  We recall that several classes of diffusion equations in the temporally homogeneous case can be studied within the setting of the semigroup theory, see e.g. Taylor \cite[Chapter IV]{Taylorbook1981}. 

One of the new aspects in the analysis of \eqref{PVI:Intro} is the construction of  analytic functions of global pseudo-differential operators on graded groups, providing a modern extension of the work of Seeley \cite{seeley}  to the nilpotent setting, and of the global functional calculus on compact Lie groups as developed by the third author and Wirth in \cite{RuzhanskyWirth2014}. 

Graded Lie groups are the family of nilpotent Lie groups characterised by the existence of Rockland operators in view of the Helffer and Nurrigat solution \cite{HelfferNourrigat}  of the Rockland conjecture. For instance, our approach allows us to study (in particular) the Heat equation associated with the following operators:
\begin{itemize}
    \item $K=-\sum_{j}X_j^2,$ that is, the positive sub-Laplacian on a stratified group, 
    \item any positive Rockland operator on a graded Lie group, and any fractional power $\mathcal{R}^{a}$ and $(1+\mathcal{R})^a$ of $\mathcal{R}$ and $1+\mathcal{R},$ respectively, defined by the spectral calculus,
    \item a long variety of  hypoelliptic pseudo-differential operators on arbitrary graded Lie groups (see Theorem \ref{Main:Th} for details).
\end{itemize}
Observe that in the Euclidean case,
the analysis of the regularity for parabolic equations has a long tradition. For instance, a good account on the local structure of solutions and other properties can be found in \cite{edib:book}. On the other hand, fractional diffusion equations have a wide variety of applications e.g. fluid mechanics \cite{Constantin2006,MajdaBertozzi2002},  mathematical finance  \cite{c:ff} and fractional dynamics \cite{skb:kk,Laskin2000,Laskin2012}. The fractional Laplacian and its different generalisations have been of intensive research interest in the last decades, e.g. \cite{rup:f1}, \cite{Cao:hy}, \cite{Uhlm:fl}. 

We observe that in the Euclidean setting in the case of the Laplacian $\mathcal{R}=-\Delta$, that is a Rockland operator of order 2, the heat equation associated to the fractional powers $(-\Delta)^{a}$ and its generalisations are related with diffusion problems \cite{Constantin2006,MajdaBertozzi2002}, and the  relevance of its analysis is justified by its applications in probability, finance, mathematical physics and differential geometry (see  \cite{Grubb2018} and references therein).

It is worth mentioning that the study of the diffusion equation for the operator $(-\Delta)^{a}$  has been undertaken from several perspectives.  Indeed, as it was pointed out in \cite{2021}, roughly speaking: (1) there is a probabilistic approach, based on the fact that it is the infinitesimal generator of a L\'evy process, (2) there is an approach by potential theoretic methods searching for similarities with the usual Laplacian $\Delta$, and (3) there is an approach in terms of  its Fourier representation using that the fractional Laplacian is (modulo an error term) a pseudo-differential operator \cite{HormanderBook34}. The first two methods have filled the major part of contributions in recent years; they deal mainly with real functions and real integral operators (Calder\'on-Zygmund singular integral operators with real kernels). 

The third method gave some early results in the sixties and seventies, indeed, from the pseudo-differential calculus in \cite{KohnNirenberg1965,HormanderBook34} and its generalisations, but has only been reintroduced fairly recently for this particular type of operators. In this work, we will use the pseudo-differential model to study the diffusion equation, e.g. for $\mathcal{R}^{a}$ and $(1+\mathcal{R})^a,$ $a>0.$ Indeed, still the kernels of these pseudo-differential operators become kernels of Calder\'on-Zygmund type \cite{FischerRuzhanskyBook2015}.

Our analysis allows us to tackle fractional powers of the propagator term. Some of the main ideas to study parabolic equations in this work correspond to an adaptation of the methods used for the classical classes of pseudo-differential operators as presented for instance in \cite{Taylorbook1981} and from the recent works \cite{Delgado2016,Delgado2018} in the setting of the Weyl-H\"ormander calculus.

In order to study the well-posedness for the initial value problem (IVP for short) \eqref{PVI:Intro}, it is necessary to extend the functional calculus of Seeley \cite{seeley} in the framework of the global H\"ormander classes of  \cite{FischerRuzhanskyBook2015}. Indeed, to  exploit the  strongly-ellipticity condition on $K(t),$ and to study the corresponding energy estimates we use a G\r{a}rding type inequality \cite{FischerRuzhanskyLowerBounds}, whose proof lies in the construction of the analytic functional calculus.  We want to emphasize that this global functional calculus extends in the context of graded Lie groups the functional calculus on compact Lie groups developed in  \cite{RuzhanskyWirth2014} and \cite[Sections 7 and 8]{CR20} for elliptic and subelliptic H\"ormander classes of operators, respectively. In the case of $G=\mathbb{R}^n,$ and $\mathcal{R}=-\Delta,$ we recover the historical 1953's inequality due to G\r{a}rding \cite{Garding1953}. 

We observe that the inclusion of the spectral calculus of Rockland operators (which is defined by their spectral measures) into the global H\"ormander classes on graded Lie groups was consistently developed in \cite{FischerRuzhanskyBook2015}, so, in this paper we address the problem of the inclusion of analytic functions of general symbols in these classes. As an application of this complex functional calculus of operators we obtain a proof of the G\r{a}rding inequality and we obtain the well-posedness of \eqref{PVI:Intro}. Naturally, the initial data are considered on the Sobolev spaces associated to Rockland operators, as consistently developed in \cite{FischerRuzhansky2017}.

As for versions of the G\r{a}rding inequality and the sharp G\r{a}rding inequality on compact Lie groups with a free of local coordinate systems approach we refer the reader to \cite[Section 8.2]{CR20} and  \cite{CardonaFedericoRuzhansky}. G\r{a}rding type inequalities in the context of the non-harmonic analysis on general smooth manifolds can be found in \cite{CKRTman:1}. 

This paper is organised as follows:
\begin{itemize}
    \item In Section \ref{Preliminaries} we briefly present the basics about the H\"ormander classes on graded groups, for this we follow  \cite{FischerRuzhanskyBook2015}.
    \item In Sections \ref{ParameterGraded} and  \ref{SFC}  we develop the analytic functional calculus  for the global H\"ormander classes on graded groups  \cite{FischerRuzhanskyBook2015}. In constrast with the functional calculus of Seeley \cite{seeley}, where the conditions on the operators are imposed in terms of the principal symbol, here the powers or functions of the operators are included in the pseudo-differential calculus relying on the conditions on the global symbol.
    \item In Subsection \ref{Gardinggraded} we present a proof for the G\r{a}rding inequality on graded Lie groups, improving the one in \cite{FischerRuzhanskyLowerBounds}, where a commutativity condition for the symbol of the operator with the symbol of a Rockland operator is required. Such a improvement is one the strong applications of  the analytic functional calculus that develop here.
    \item Finally, the analysis  for the diffusion problem \eqref{PVI:Intro} is established in Theorem \ref{Main:Th} of Section \ref{GST}. 
\end{itemize}

\section{Preliminaries and global  quantization  on graded Lie groups}\label{Preliminaries}
 The notation and terminology herein on the analysis of homogeneous Lie groups are mostly taken from Folland and Stein \cite{FollandStein1982}. For the theory of pseudo-differential operators we will follow the framework  developed in \cite{FischerRuzhanskyBook2015} through  the notion of (operator-valued) global symbols. If $H_1,H_2$ are Hilbert spaces,  $\mathscr{B}(H_1,H_2)$ denotes the $C^*$-algebra of bounded linear operators from $H_1$ to $H_2,$ and also we will write $\mathscr{B}(H_1)=\mathscr{B}(H_1,H_2).$
    \subsection{Homogeneous and graded Lie groups} 
    Let $G$ be a homogeneous Lie group. This means that $G$ is a connected and simply connected Lie group whose Lie algebra $\mathfrak{g}$ is endowed with a family of dilations $D_{r},$ $r>0,$ which are automorphisms on $\mathfrak{g}$  satisfying the following two conditions:
\begin{itemize}
\item For every $r>0,$ $D_{r}$ is a map of the form
$$ D_{r}=\textnormal{Exp}(\ln(r)A) $$
for some diagonalisable linear operator $A\equiv \textnormal{diag}[\nu_1,\cdots,\nu_n]$ on $\mathfrak{g}.$
\item $\forall X,Y\in \mathfrak{g}, $ and $r>0,$ $[D_{r}X, D_{r}Y]=D_{r}[X,Y].$ 
\end{itemize}
We call  the eigenvalues of $A,$ $\nu_1,\nu_2,\cdots,\nu_n,$ the dilation's weights or weights of $G$.  The homogeneous dimension of a homogeneous Lie group $G$ is given by  $$ Q=\textnormal{\textbf{Tr}}(A)=\sum_{j=1}^n\nu_j.  $$
The family of dilations $D_{r}$ of  $\mathfrak{g}$ induces a family of  maps on $G$ defined via,
$$ D_{r}:=\exp\circ D_{r} \circ \exp^{-1},\,\, r>0, $$
where $\exp:\mathfrak{g}\rightarrow G$ is the  exponential mapping associated to the Lie group $G.$ We refer to the family $D_{r},$ $r>0,$ as dilations on the group. If we write $rx=D_{r}(x),$ $x\in G,$ $r>0,$ then a relation between the homogeneous structure of $G$ and the Haar measure $dx$ on $G$ is given by $$ \int\limits_{G}(f\circ D_{r})(x)dx=r^{-Q}\int\limits_{G}f(x)dx. $$
    
A  Lie group is graded if its Lie algebra $\mathfrak{g}$ may be decomposed as the direct sum of subspaces $\mathfrak{g}=\mathfrak{g}_{1}\oplus\mathfrak{g}_{2}\oplus \cdots \oplus \mathfrak{g}_{s}$ such that $[\mathfrak{g}_{i},\mathfrak{g}_{j} ]\subset \mathfrak{g}_{i+j},$ and $ \mathfrak{g}_{i+j}=\{0\}$ if $i+j>s.$  Examples of such groups are the Heisenberg group $\mathbb{H}^n$ and more generally any stratified groups where the Lie algebra $ \mathfrak{g}$ is generated by $\mathfrak{g}_{1}$.  Here, $n$ is the topological dimension of $G,$ $n=n_{1}+\cdots +n_{s},$ where $n_{k}=\mbox{dim}\mathfrak{g}_{k}.$

A Lie algebra admitting a family of dilations is nilpotent, and hence so is its associated
connected, simply connected Lie group. The converse does not hold, i.e., not every
nilpotent Lie group is homogeneous (see Dyer \cite{Dyer1970}) although they exhaust a large class (see Johnson \cite[page 294]{Johnson1975}). Indeed, the main class of Lie groups under our consideration is that of graded Lie groups. A graded Lie group $G$ is a homogeneous Lie group equipped with a family of weights $\nu_j,$ all of them positive rational numbers. Let us observe that if $\nu_{i}=\frac{a_i}{b_i}$ with $a_i,b_i$ integer numbers,  and $b$ is the least common multiple of the $b_i's,$ the family of dilations 
$$ \mathbb{D}_{r}=\textnormal{Exp}(\ln(r^b)A):\mathfrak{g}\rightarrow\mathfrak{g}, $$
has integer weights,  $\nu_{i}=\frac{a_i b}{b_i}. $ So, in this paper we always assume that the weights $\nu_j,$ defining the family of dilations are non-negative integer numbers which allow us to assume that the homogeneous dimension $Q$ is a non-negative integer number. This is a natural context for the study of Rockland operators (see Remark 4.1.4 of \cite{FischerRuzhanskyBook2015}).

\subsection{Fourier analysis on nilpotent Lie groups}
Let $G$ be a simply connected nilpotent Lie group.  
Let us assume that $\pi$ is a continuous, unitary and irreducible  representation of $G,$ this means that,
\begin{itemize}
    \item $\pi\in \textnormal{Hom}(G, \textnormal{U}(H_{\pi})),$ for some separable Hilbert space $H_\pi,$ i.e. $\pi(gg')=\pi(g)\pi(g')$ and for the  adjoint of $\pi(g),$ $\pi(g)^*=\pi(g^{-1}),$ for every $g,g'\in G.$
    \item The map $(g,v)\mapsto \pi(g)v, $ from $G\times H_\pi$ into $H_\pi$ is continuous.
    \item For every $g\in G,$ and $W_\pi\subset H_\pi,$ if $\pi(g)W_{\pi}\subset W_{\pi},$ then $W_\pi=H_\pi$ or $W_\pi=\emptyset.$
\end{itemize} Let $\textnormal{Rep}(G)$ be the family of unitary, continuous and irreducible representations of $G.$ The relation, {\small{
\begin{equation*}
    \pi_1\sim \pi_2\textnormal{ if and only if, there exists } A\in \mathscr{B}(H_{\pi_1},H_{\pi_2}),\textnormal{ such that }A\pi_{1}(g)A^{-1}=\pi_2(g), 
\end{equation*}}}for every $g\in G,$ is an equivalence relation and the unitary dual of $G,$ denoted by $\widehat{G}$ is defined via
$
    \widehat{G}:={\textnormal{Rep}(G)}/{\sim}.
$ Let us denote by $d\pi$ the Plancherel measure on $\widehat{G}.$ 
The Fourier transform of $f\in \mathscr{S}(G), $ (this means that $f\circ \textnormal{exp}\in \mathscr{S}(\mathfrak{g})$, with $\mathfrak{g}\simeq \mathbb{R}^{\dim(G)}$) at $\pi\in\widehat{G},$ is defined by 
\begin{equation*}
    \widehat{f}(\pi)=\int\limits_{G}f(g)\pi(g)^*dg:H_\pi\rightarrow H_\pi,\textnormal{   and   }\mathscr{F}_{G}:\mathscr{S}(G)\rightarrow \mathscr{S}(\widehat{G}):=\mathscr{F}_{G}(\mathscr{S}(G)).
\end{equation*}
If we identify one representation $\pi$ with its equivalence class, $[\pi]=\{\pi':\pi\sim \pi'\}$,  for every $\pi\in \widehat{G}, $ the Kirillov trace character $\Theta_\pi$ defined by  $$(\Theta_{\pi},f):
=\textnormal{\textbf{Tr}}(\widehat{f}(\pi)),$$ is a tempered distribution on $\mathscr{S}(G).$ In particular, the identity
$
    f(e_G)=\int\limits_{\widehat{G}}\textnormal{\textbf{Tr}}(\widehat{f}(\pi))d\pi,
$ 
implies the Fourier inversion formula $f=\mathscr{F}_G^{-1}(\widehat{f}),$ where
\begin{equation*}
    (\mathscr{F}_G^{-1}\sigma)(g):=\int\limits_{\widehat{G}}\textnormal{\textbf{Tr}}[\pi(g)\sigma(\pi)]d\pi,\,\,g\in G,\,\,\,\,\mathscr{F}_G^{-1}:\mathscr{S}(\widehat{G})\rightarrow\mathscr{S}(G),
\end{equation*}is the inverse Fourier  transform. In this context, the Plancherel theorem takes the form $\Vert f\Vert_{L^2(G)}=\Vert \widehat{f}\Vert_{L^2(\widehat{G})}$,  where  $ L^2(\widehat{G}):=\int\limits_{\widehat{G}}H_\pi\otimes H_{\pi}^*d\pi,$  is the Hilbert space endowed with the norm: $\Vert \sigma\Vert_{L^2(\widehat{G})}=(\int_{\widehat{G}}\Vert \sigma(\pi)\Vert_{\textnormal{HS}}^2d\pi)^{\frac{1}{2}}.$

\subsection{Homogeneous linear operators and Rockland operators} A linear operator $T:C^\infty(G)\rightarrow \mathscr{D}'(G)$ is homogeneous of  degree $\nu\in \mathbb{C}$ if for every $r>0$ the equality 
\begin{equation*}
T(f\circ D_{r})=r^{\nu}(Tf)\circ D_{r}
\end{equation*}
holds for every $f\in \mathscr{D}(G). $
If for every representation $\pi\in\widehat{G},$ $\pi:G\rightarrow U({H}_{\pi}),$ we denote by ${H}_{\pi}^{\infty}$ the set of smooth vectors, that is, the space of elements $v\in {H}_{\pi}$ such that the function $x\mapsto \pi(x)v,$ $x\in \widehat{G}$ is smooth,  a Rockland operator is a left-invariant differential operator $\mathcal{R}$ which is homogeneous of positive degree $\nu=\nu_{\mathcal{R}}$ and such that, for every unitary irreducible non-trivial representation $\pi\in \widehat{G},$ $\pi({R})$ is injective on ${H}_{\pi}^{\infty};$ $\sigma_{\mathcal{R}}(\pi)=\pi(\mathcal{R})$ is the symbol associated to $\mathcal{R}.$ It coincides with the infinitesimal representation of $\mathcal{R}$ as an element of the universal enveloping algebra. It can be shown that a Lie group $G$ is graded if and only if there exists a differential Rockland operator on $G.$ If the Rockland operator is formally self-adjoint, then $\mathcal{R}$ and $\pi(\mathcal{R})$ admit self-adjoint extensions on $L^{2}(G)$ and ${H}_{\pi},$ respectively. Now if we preserve the same notation for their self-adjoint
extensions and we denote by $E$ and $E_{\pi}$  their respective spectral measures, we will denote by
$$ f(\mathcal{R})=\int\limits_{-\infty}^{\infty}f(\lambda) dE(\lambda),\,\,\,\textnormal{and}\,\,\,\pi(f(\mathcal{R}))\equiv f(\pi(\mathcal{R}))=\int\limits_{-\infty}^{\infty}f(\lambda) dE_{\pi}(\lambda), $$ the functions defined by the functional calculus. 
In general, we will reserve the notation ${E_A(\lambda)}_{0\leq\lambda<\infty}$ for the spectral measure associated with a positive and self-adjoint operator $A$ on a Hilbert space $H.$

 {{For  any $\alpha\in \mathbb{N}_0^n,$ and for an arbitrary family $X_1,\cdots, X_n,$ of left-invariant  vector-fields we will use the notation
\begin{equation}
    [\alpha]:=\sum_{j=1}^n\nu_j\alpha_j,
\end{equation}for the homogeneity degree of the operator $X^{\alpha}:=X_1^{\alpha_1}\cdots X_{n}^{\alpha_n},$ whose order is $|\alpha|:=\sum_{j=1}^n\alpha_j.$}}

\subsection{Symbols and quantization of pseudo-differential operators}
 In order to present a consistent definition of pseudo-differential operators as developed in \cite{FischerRuzhanskyBook2015} (see the quantisation formula  \eqref{Quantization}), we need  a suitable class of spaces on the unitary dual $\widehat{G}$ acting in a suitable way with the set of smooth vectors $H_{\pi}^{\infty},$ on every representation space $H_{\pi}.$ Let us now recall the main notions.
 \begin{definition}[Sobolev spaces on  smooth vectors] Let $\pi_1\in \textnormal{Rep}(G),$ and $a\in \mathbb{R}.$ We denote by $H_{\pi_1}^a,$ the Hilbert space obtained  as the completion of $H_{\pi_1}^\infty$ with respect to the norm
 \begin{equation*}
     \Vert v \Vert_{H_{\pi_1}^a}=\Vert\pi_1(1+\mathcal{R})^{\frac{a}{\nu}} v\Vert_{H_{\pi_1}},
 \end{equation*}
 where $\mathcal{R}$ is  a positive Rockland operator on $G$ of homogeneous degree $\nu>0.$
 \end{definition}
 
 In order to introduce the general notion of symbol as the one developed in \cite{FischerRuzhanskyBook2015}, we will use a suitable notion of operator-valued  symbols acting on smooth vectors. We introduce it as follows.
 
\begin{definition}
A $\widehat{G}$-field of operators $\sigma=\{\sigma(\pi):\pi\in \widehat{G}\}$ defined on smooth vectors {is defined} on the Sobolev space ${H}_\pi^a$ when for each representation $\pi_1\in \textnormal{Rep}(G),$ the operator $\sigma(\pi_1)$ is bounded from $H^a_{\pi_1}$ into $H_{\pi_{1}}$ in the sense that
\begin{equation*}
    \sup_{ \Vert v\Vert_{H_{\pi_1}^a}=1}\Vert \sigma(\pi_1)v \Vert<\infty.
\end{equation*}
\end{definition}

We will consider those $\widehat{G}$-fields of operators with ranges in Sobolev spaces on smooth vectors.  We recall that the  Sobolev space $L^{2}_{a}(G)$ is  defined by the norm (see \cite[Chapter 4]{FischerRuzhanskyBook2015})
\begin{equation}\label{L2ab2}
    \Vert f \Vert_{L^{2}_{a}(G)}=\Vert (1+\mathcal{R})^{\frac{a}{\nu}}f\Vert_{L^2(G)},
\end{equation} for $a\in \mathbb{R}.$

\begin{definition}
A $\widehat{G}$-field of operators  defined on smooth vectors {with range} in the Sobolev space $H_{\pi}^a$ is a family of classes of operators $\sigma=\{\sigma(\pi):\pi\in \widehat{G}\}$ where
\begin{equation*}
    \sigma(\pi):=\{\sigma(\pi_1):H^{\infty}_{\pi_1}\rightarrow H_{\pi}^a,\,\,\pi_1\in \pi\},
\end{equation*} for every $\pi\in \widehat{G}$ viewed as a subset of $\textnormal{Rep}(G),$ satisfying for every two elements $\sigma(\pi_1)$ and $\sigma(\pi_2)$ in $\sigma(\pi):$
\begin{equation*}
  \textnormal{If  }  \pi_{1}\sim \pi_2  \textnormal{  then  }   \sigma(\pi_1)\sim \sigma(\pi_2). 
\end{equation*}
\end{definition}
  The following notion  will be useful in order to use the general theory of non-commutative integration (see e.g. Dixmier \cite{Dixmier1953}). 
\begin{definition}
A $\widehat{G}$-field of operators  defined on smooth vectors with range in the Sobolev space $H_\pi^a$ {is measurable}  when for some (and hence for any) $\pi_1\in \pi$ and any vector $v_{\pi_1}\in H_{\pi_1}^\infty,$ as $\pi\in \widehat{G},$ the resulting field $\{\sigma(\pi)v_\pi:\pi\in\widehat{G}\},$ 
is $d\pi$-measurable and
\begin{equation*}
    \int\limits_{\widehat{G} }\Vert v_\pi \Vert^2_{H_\pi^a}d\pi=\int\limits_{\widehat{G} }\Vert\pi(1+\mathcal{R})^{\frac{a}{\nu}} v_\pi \Vert^2_{H_\pi}d\pi<\infty.
\end{equation*}
\end{definition}
\begin{remark}
We always assume that a $\widehat{G}$-field of operators  defined on smooth vectors {with range} in the Sobolev space $H_{\pi}^a$ is $d\pi$-measurable.
\end{remark} The $\widehat{G}$-fields of operators associated to Rockland operators can be defined as follows.
\begin{definition}
Let $L^2_a(\widehat{G})$ denote the space of fields of operators $\sigma$ with range in $H_\pi^a,$ that is,
\begin{equation*}
    \sigma=\{\sigma(\pi):H_\pi^\infty\rightarrow H_\pi^a\}, \textnormal{ with }\{\pi(1+\mathcal{R})^{\frac{a}{\nu}}\sigma(\pi):\pi\in \widehat{G}\}\in L^2(\widehat{G}),
\end{equation*}for one (and hence for any) Rockland operator of homogeneous degree $\nu.$ We also denote
\begin{equation*}
    \Vert \sigma\Vert_{L^2_a(\widehat{G})}:=\Vert \pi(1+\mathcal{R})^{\frac{a}{\nu}}\sigma(\pi)\Vert_{L^2(\widehat{G})}.
\end{equation*}
\end{definition} By using the notation above, we will introduce a family of function spaces required to define $\widehat{G}$-fields of operators (that will be used to define the symbol of a pseudo-differential operator, see Definition \ref{SRCK}).
\begin{definition}[The spaces $\mathscr{L}_{L}(L^2_a(G),L^2_b(G)),$ $\mathcal{K}_{a,b}(G)$ and $L^\infty_{a,b}(\widehat{G})$]
\hspace{0.1cm}
\begin{itemize}
    \item The space $\mathscr{L}_{L}(L^2_a(G),L^2_b(G)) $ consists of all  left-invariant linear operators $T$  such that  $T:L^2_a(G)\rightarrow L^2_b(G) $ extends to a bounded operator.
    \item The space  $\mathcal{K}_{a,b}(G)$ is the family of all right convolution kernels of elements in $  \mathscr{L}_{L}(L^2_a(G),L^2_b(G))  ,$ i.e. $k=T\delta\in \mathcal{K}_{a,b}(G)$ if and only if  $T\in
    \mathscr{L}_{L}(L^2_a(G),L^2_b(G)) .$ 
    \item We also define the space $L^\infty_{a,b}(\widehat{G})$ by the following condition:  $\sigma\in L^\infty_{a,b}(\widehat{G})$ if 
\begin{equation*}
    \Vert \pi(1+\mathcal{R})^{\frac{b}{\nu}}\sigma(\pi)\pi(1+\mathcal{R})^{-\frac{a}{\nu}} \Vert_{L^\infty(\widehat{G})}:=\sup_{\pi\in\widehat{G}}\Vert \pi(1+\mathcal{R})^{\frac{b}{\nu}}\sigma(\pi)\pi(1+\mathcal{R})^{-\frac{a}{\nu}} \Vert_{\mathscr{B}(H_\pi)}<\infty.
\end{equation*}
\end{itemize} In this case $T_\sigma:L^2_a(G)\rightarrow L^2_b(G)$ extends to a bounded operator with 
\begin{equation*}
  \Vert \sigma\Vert _{L^\infty_{a,b}(\widehat{G})}= \Vert T_{\sigma} \Vert_{\mathscr{L}(L^2_a(G),L^2_b(G))},
\end{equation*} and   $\sigma\in L^\infty_{a,b}(\widehat{G})$ if and only if $k:=\mathscr{F}_{G}^{-1}\sigma \in \mathcal{K}_{a,b}(G).$
\end{definition}
   With the previous definitions, we will introduce the type of symbols that we will use  in this work and under which the quantization formula makes sense.
   
   \begin{definition}[Symbols and right-convolution kernels]\label{SRCK} A {symbol} is a field of operators $\{\sigma(x,\pi):H_\pi^\infty\rightarrow H_\pi^\infty,\,\,\pi\in\widehat{G}\},$ depending on $x\in G,$ such that 
   \begin{equation*}
       \sigma(x,\cdot)=\{\sigma(x,\pi):H_\pi^\infty\rightarrow H_\pi^\infty,\,\,\pi\in\widehat{G}\}\in L^\infty_{a,b}(\widehat{G})
   \end{equation*}for some $a,b\in \mathbb{R}.$ The {right-convolution kernel} $k\in C^\infty(G,\mathscr{S}'(G))$ associated with $\sigma$ is defined, via the inverse Fourier transform on the group by
   \begin{equation*}
       x\mapsto k(x)\equiv k_{x}:=\mathscr{F}_{G}^{-1}(\sigma(x,\cdot)): G\rightarrow\mathscr{S}'(G).
   \end{equation*}
   \end{definition}
   Definition \ref{SRCK} in this section allows us to establish the following theorem, which gives sense to the quantization of pseudo-differential operators in the graded setting (see Theorem 5.1.39 of \cite{FischerRuzhanskyBook2015}).
   \begin{theorem}\label{thetheoremofsymbol}
   Let us consider a symbol $\sigma$ and its associated right-convolution  kernel $k.$ For every $f\in \mathscr{S}(G),$ let us define the operator $A$ acting on $\mathscr{S}(G),$ via 
   \begin{equation}\label{pseudo}
       Af(x)=(f\ast k_{x})(x),\,\,\,\,\,x\in G.
   \end{equation}Then $Af\in C^\infty,$ and 
   \begin{equation}\label{Quantization}
       Af(x)=\int\limits_{\widehat{G}}\textnormal{\textbf{Tr}}(\pi(x)\sigma(x,\pi)\widehat{f}(\pi))d\pi.
   \end{equation}
   \end{theorem}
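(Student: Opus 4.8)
The plan is to establish the two assertions of Theorem~\ref{thetheoremofsymbol} separately: first that $Af\in C^\infty(G)$, and then the Fourier-side identity \eqref{Quantization}. Throughout I would use that, by Definition~\ref{SRCK}, $\sigma(x,\cdot)\in L^\infty_{a,b}(\widehat G)$ for some fixed $a,b\in\mathbb{R}$, so that $k_x=\mathscr{F}_G^{-1}(\sigma(x,\cdot))\in\mathcal{K}_{a,b}(G)\subset\mathscr{S}'(G)$ and the left-invariant operator $f\mapsto f\ast k_x=T_{\sigma(x,\cdot)}f$ is bounded from $L^2_a(G)$ to $L^2_b(G)$. For the smoothness claim, note that for fixed $x$ the convolution in \eqref{pseudo} is the pairing $(f\ast k_x)(y)=\langle k_x,\ z\mapsto f(yz^{-1})\rangle$; since $f\in\mathscr{S}(G)$ and the twisted translation $z\mapsto f(yz^{-1})$ is a smooth map of $y$ into $\mathscr{S}(G)$, the standard fact that the convolution of a Schwartz function with a tempered distribution on a simply connected nilpotent Lie group is a smooth, slowly increasing function gives $f\ast k_x\in C^\infty(G)$. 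Feeding in that $x\mapsto\sigma(x,\cdot)$, hence $x\mapsto k_x$, is smooth into $\mathscr{S}'(G)$ (part of what it means for $\sigma$ to be a symbol), one obtains joint smoothness of $(x,y)\mapsto(f\ast k_x)(y)$ on $G\times G$; restricting to the diagonal gives $x\mapsto Af(x)=(f\ast k_x)(x)\in C^\infty(G)$, and allows left-invariant derivatives of $Af$ to be computed by differentiating the defining expressions.

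For the quantization formula, I would write the convolution explicitly, $Af(x)=(f\ast k_x)(x)=\int_G f(y)\,k_x(y^{-1}x)\,dy$, and substitute the Fourier inversion formula $k_x(w)=\int_{\widehat G}\textnormal{\textbf{Tr}}[\pi(w)\sigma(x,\pi)]\,d\pi$ at $w=y^{-1}x$. Using $\pi(y^{-1}x)=\pi(y)^*\pi(x)$ and the cyclicity of the trace, this becomes
\[
Af(x)=\int_G\!\!\int_{\widehat G} f(y)\,\textnormal{\textbf{Tr}}\big[\pi(x)\,\sigma(x,\pi)\,\pi(y)^*\big]\,d\pi\,dy,
\]
and once this double integral converges absolutely, Fubini's theorem together with the continuity of the trace on the trace class lets us bring $\int_G f(y)\pi(y)^*\,dy=\widehat f(\pi)$ inside the trace, giving exactly \eqref{Quantization}. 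To make the interchange rigorous I would first prove the identity for the dense subclass of symbols with $\sigma(x,\cdot)\in L^2(\widehat G)$ --- so that $k_x\in L^2(G)$ and each step is classical via Plancherel --- and then pass to a general symbol by approximation, controlling the error with the $L^\infty_{a,b}$-boundedness of $T_{\sigma(x,\cdot)}$ and the rapid decay of $\widehat f$.

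The main obstacle is precisely this absolute convergence, equivalently the local-in-$x$ convergence of the $\widehat G$-integral in \eqref{Quantization}. Here I would exploit the defining estimate of $L^\infty_{a,b}(\widehat G)$ by factoring $\sigma(x,\pi)=\pi(1+\mathcal{R})^{-\frac{b}{\nu}}\,\tau(x,\pi)\,\pi(1+\mathcal{R})^{\frac{a}{\nu}}$ with $\sup_\pi\|\tau(x,\pi)\|_{\mathscr{B}(H_\pi)}\le\|\sigma(x,\cdot)\|_{L^\infty_{a,b}(\widehat G)}$, and then insert auxiliary factors $\pi(1+\mathcal{R})^{\pm\frac{M}{\nu}}$ so as to bound the trace by Hölder's inequality for Schatten norms. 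The two inputs are: the kernel of $(1+\mathcal{R})^{-\frac{M}{\nu}}$ lies in $L^2(G)$ for $M$ large, i.e. $\pi(1+\mathcal{R})^{-\frac{M}{\nu}}$ is square-integrable over $\widehat G$ in the Hilbert-Schmidt norm; and $\pi(1+\mathcal{R})^{\frac{N}{\nu}}\widehat f(\pi)=\widehat{(1+\mathcal{R})^{\frac{N}{\nu}}f}(\pi)$ has finite $L^2(\widehat G)$-norm for every $N$, since $(1+\mathcal{R})^{\frac{N}{\nu}}f\in\mathscr{S}(G)\subset L^2(G)$ (while $\pi(x)$ and $\pi(y)$ are unitary and $f\in L^1(G)$). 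Combining these yields convergence uniformly for $x$ in compact sets, which simultaneously legitimises Fubini, the use of continuity of the trace, and gives a second, quantitative route to the smoothness of $Af$ via differentiation under the integral in \eqref{Quantization}. I expect this Schatten-norm and spectral-decay bookkeeping for the Rockland operator $\mathcal{R}$ to be the only genuinely technical step; everything else reduces to the Fourier inversion formula, the relation $\pi(y^{-1}x)=\pi(y)^*\pi(x)$, and cyclicity of the trace.
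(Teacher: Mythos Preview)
The paper does not actually prove this theorem: it is stated as Theorem~\ref{thetheoremofsymbol} with an explicit citation ``see Theorem 5.1.39 of \cite{FischerRuzhanskyBook2015}'' and no argument is given in the paper itself. So there is no in-paper proof to compare against; your proposal is to be measured against the standard argument in the cited monograph.

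Your sketch is essentially that standard argument and is correct in outline. The smoothness claim follows, as you say, from the fact that $k\in C^\infty(G,\mathscr{S}'(G))$ (this is part of Definition~\ref{SRCK}) together with the smoothness of convolution of a Schwartz function with a tempered distribution on a nilpotent Lie group. For \eqref{Quantization}, the core manipulation --- Fourier inversion for $k_x$, the identity $\pi(y^{-1}x)=\pi(y)^*\pi(x)$, cyclicity of the trace, and recognising $\int_G f(y)\pi(y)^*\,dy=\widehat f(\pi)$ --- is exactly right. Your plan for justifying Fubini via the factorisation $\sigma(x,\pi)=\pi(1+\mathcal{R})^{-b/\nu}\tau(x,\pi)\pi(1+\mathcal{R})^{a/\nu}$ with $\tau$ uniformly bounded, combined with Schatten--H\"older and the Hilbert--Schmidt integrability of $\pi(1+\mathcal{R})^{-M/\nu}$ for $M$ large, is precisely the mechanism used in \cite{FischerRuzhanskyBook2015} to make the trace integral absolutely convergent (cf.\ the analysis around Lemma~5.1.35 and the proof of Theorem~5.1.39 there). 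One small caution: the Fourier inversion formula $k_x(w)=\int_{\widehat G}\textnormal{\textbf{Tr}}[\pi(w)\sigma(x,\pi)]\,d\pi$ is only literally valid when $\sigma(x,\cdot)$ has enough decay (e.g.\ lies in $\mathscr{S}(\widehat G)$ or $L^2(\widehat G)$); for a general symbol in $L^\infty_{a,b}(\widehat G)$ you should, as you already indicate, run the argument first on a dense subclass and then pass to the limit using the uniform $L^2_a\to L^2_b$ bound on $T_{\sigma(x,\cdot)}$. With that proviso, your proposal matches the reference.
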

   
 Theorem   \ref{thetheoremofsymbol} motivates the following definition.
 \begin{definition}\label{DefiPSDO}
A continuous linear operator $A:C^\infty(G)\rightarrow\mathscr{D}'(G)$ with Schwartz kernel $K_A\in C^{\infty}(G)\widehat{\otimes}_{\pi} \mathscr{D}'(G),$ is a pseudo-differential operator, if
 there exists a  \textit{symbol}, which is a field of operators $\{\sigma(x,\pi):H_\pi^\infty\rightarrow H_\pi^\infty,\,\,\pi\in\widehat{G}\},$ depending on $x\in G,$ such that 
   \begin{equation*}
       \sigma(x,\cdot)=\{\sigma(x,\pi):H_\pi^\infty\rightarrow H_\pi^\infty,\,\,\pi\in\widehat{G}\}\in L^\infty_{a,b}(\widehat{G})
   \end{equation*}for some $a,b\in \mathbb{R},$ such that, the Schwartz kernel of $A$ is given by 
\begin{equation*}
    K_{A}(x,y)=\int\limits_{\widehat{G}}\textnormal{\textbf{Tr}}(\pi(y^{-1}x)\sigma(x,\pi))d\pi=k_{x}(y^{-1}x).
\end{equation*}
{{In this case, we use  the notation
\begin{equation*}
    A:=\textnormal{Op}(\sigma),
\end{equation*}     
to indicate that $A$ is the pseudo-differential operator associated with symbol  $\sigma.$}}
\end{definition}
 
\subsection{The symbol of a continuous linear operator} The global symbol of  a continuous linear operator on a graded Lie group was computed in Theorem 3.2 of \cite{RuzhanskyDelgadoCardona2019}. We present this formula in Theorem 
\ref{SymbolPseudo}. Let $\mathcal{R}$ be a positive Rockland operator on a graded Lie group. Then $\mathcal{R}$  and $\pi(\mathcal{R}):=d\pi(\mathcal{R})$ (the infinitesimal representation of $\mathcal{R}$) are symmetric and  densely defined operators on $C^\infty_0 (G)$ and $H^\infty_\pi\subset H_\pi.$ We will denote by $\mathcal{R}$  and $\pi(\mathcal{R}):=d\pi(\mathcal{R})$ their self-adjoint extensions to $L^2(G)$ and $H_\pi$ respectively (see Proposition 4.1.5 and Corollary 4.1.16 of \cite[page 178]{FischerRuzhanskyBook2015}). 

\begin{remark}\label{symbolremark}
Let $\mathcal{R}$ be a positive  Rockland operator of  homogeneous degree $\nu$ on a graded Lie group $G.$ Every operator $  \pi(\mathcal{R}) $ has discrete spectrum (see ter Elst and Robinson \cite{TElst+Robinson}) admitting, by the spectral theorem, a basis  contained in its domain. In this case, $H_{\pi}^{\infty}\subset\textnormal{Dom}(\pi(\mathcal{R}))\subset H_{\pi},$ but in view of Proposition 4.1.5 and Corollary 4.1.16 of \cite[page 178]{FischerRuzhanskyBook2015}, every $ \pi(\mathcal{R})$ is densely defined and symmetric on $H^\infty_\pi,$ and this fact allows us to define the (restricted) domain of $\pi(\mathcal{R}),$ as \begin{equation}\label{RestrictedDomain}
    \textnormal{Dom}_{\textnormal{rest}}(\pi(\mathcal{R}))=H_{\pi}^{\infty}.   
\end{equation}Next, when we mention the domain of $\pi(\mathcal{R})$ we are referring to the restricted domain in  \eqref{RestrictedDomain}. This fact will be important, because, via the spectral theorem we can construct a basis for $H_{\pi},$ consisting of vectors in $\textnormal{Dom}_{\textnormal{rest}}(\pi(\mathcal{R}))=H_{\pi}^{\infty},$ where the operator $\pi(\mathcal{R})$ is diagonal. So, if  $B_\pi=\{e_{\pi,k}\}_{k=1}^\infty\subset H_{\pi}^{\infty},$ is  an orthonormal basis such that  $ \pi(\mathcal{R})e_{\pi,k}=\nu_{kk}(\pi)e_{\pi,k},\,k\in \mathbb{N}, \,\,\,\pi \in \widehat{G}, $  the function $x\mapsto \pi(x)e_{\pi,k},$ is smooth and the family of functions
\begin{equation}\label{piij}
   \pi_{ij}:G\rightarrow \mathbb{C},\,\, \pi(x)_{ij}:=( \pi(x)e_{\pi,i},e_{\pi,j} )_{H_\pi},\,\,x\in G,
\end{equation} are smooth functions on $G.$  Consequently, for every continuous linear operator $A:C^\infty(G)\rightarrow C^\infty(G)$ we have \begin{equation*}\{\pi_{ij}\}_{i,j=1}^\infty\subset \textnormal{Dom}(A)=C^\infty(G),\end{equation*} for every $ \pi\in \widehat{G}. $
\end{remark} 

In view of Remark \ref{symbolremark}  we have the following theorem where we present the formula of the  global symbol in terms of its corresponding pseudo-differential operator in the graded setting (see Theorem 3.2 of \cite{RuzhanskyDelgadoCardona2019}).

\begin{theorem}\label{SymbolPseudo}
Let  $\mathcal{R}$ be a positive  Rockland operator of  homogeneous degree $\nu$ on a graded Lie group $G.$ For every $\pi\in \widehat{G},$ let  $B_\pi=\{e_{\pi,k}\}_{k=1}^\infty\subset H_{\pi}^{\infty},$ be  a basis where the operator $\pi(\mathcal{R})$ is diagonal, i.e., 
\begin{equation*}
    \pi(\mathcal{R})e_{\pi,j}=\nu_{jj}(\pi)e_{\pi,j},\,j\in \mathbb{N}_0, \,\,\,\pi \in \widehat{G}.
\end{equation*} For every $x\in G,$ and $\pi\in \widehat{G},$ let us consider the functions $\pi(\cdot)_{ij}\in C^\infty(G)$ in \eqref{piij} induced by the coefficients of the matrix representation
 of $\pi(x)$ in the basis $B_\pi.$ If $A:C^\infty(G)\rightarrow C^\infty(G)$ is a continuous linear operator with symbol 
\begin{equation}\label{definition}
    \sigma:=\{\sigma(x,\pi):H_{\pi}^\infty\rightarrow H_\pi^\infty:\, x\in G,\,\pi\in \widehat{G}\},\, 
\end{equation} such that
\begin{equation}\label{quantization}
    Af(x)=\int\limits_{\widehat{G}}\textnormal{\textbf{Tr}}[\pi(x)\sigma(x,\pi)\widehat{f}(\pi)]d\pi,
\end{equation} for every $f\in \mathscr{S}(G),$ and a.e. $(x,\pi),$ and if   $A\pi(x)$ is the  densely defined operator on $H_{\pi}^\infty,$ via
\begin{equation}\label{thesymbolofA}
  A\pi(x)\equiv  ((A\pi(x)e_{\pi,i},e_{\pi,j}))_{i,j=1}^\infty,\,\,\,(A\pi(x)e_{\pi,i},e_{\pi,j})=:(A\pi_{ij})(x),
\end{equation} then we have
\begin{equation}\label{formulasymbol}
    \sigma(x,\pi)=\pi(x)^*A\pi(x),
\end{equation} for every $x\in G,$ and a.e. $\pi\in \widehat{G}.$ 
\end{theorem}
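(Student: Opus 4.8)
The plan is to use the quantization formula \eqref{quantization} together with the key observation that the matrix coefficients $\pi_{ij}$ lie in the domain $C^\infty(G)$ of $A$, so that $A\pi(x)$ in \eqref{thesymbolofA} is well defined. First I would test the operator $A$ against the functions $\pi_{ij}$: since $\{e_{\pi,k}\}$ is an orthonormal basis of $H_\pi$ in which $\pi(\mathcal{R})$ is diagonal, and since $\pi_{ij}(x)=(\pi(x)e_{\pi,i},e_{\pi,j})_{H_\pi}$ is smooth, the Fourier transform $\widehat{\pi_{ij}}(\pi')$ is (up to the Plancherel normalization) a rank-one operator supported at $\pi'\sim\pi$, essentially $e_{\pi,j}\otimes e_{\pi,i}^{*}$. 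Plugging this into \eqref{quantization} with $f=\pi_{ij}$ and using $\textnormal{\textbf{Tr}}[\pi(x)\sigma(x,\pi)(e_{\pi,j}\otimes e_{\pi,i}^{*})]=(\pi(x)\sigma(x,\pi)e_{\pi,i},e_{\pi,j})_{H_\pi}$ would give
\begin{equation*}
(A\pi_{ij})(x)=(\pi(x)\sigma(x,\pi)e_{\pi,i},e_{\pi,j})_{H_\pi}.
\end{equation*}
Comparing with the definition \eqref{thesymbolofA}, this says exactly $(A\pi(x)e_{\pi,i},e_{\pi,j})=(\pi(x)\sigma(x,\pi)e_{\pi,i},e_{\pi,j})$ for all $i,j$, i.e. $A\pi(x)=\pi(x)\sigma(x,\pi)$ as densely defined operators on $H_\pi^\infty$. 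Multiplying on the left by $\pi(x)^{*}=\pi(x)^{-1}$ (using unitarity of $\pi$) yields $\sigma(x,\pi)=\pi(x)^{*}A\pi(x)$, which is \eqref{formulasymbol}.

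The key steps, in order, are: (1) identify $\widehat{\pi_{ij}}$ as a rank-one operator concentrated on the equivalence class of $\pi$, citing Fourier analysis on nilpotent groups and the Plancherel measure; this is where the dual $\widehat{G}$ being discrete-enough (or at least the measurability set-up from the preliminary definitions) is used, and care is needed because $\widehat{G}$ is generally not discrete, so one works with the appropriate Plancherel decomposition and the fact that $A$ is continuous $C^\infty(G)\to C^\infty(G)$ so that evaluation at $x$ and pairing against $e_{\pi,j}$ commute with the integral over $\widehat{G}$; (2) substitute into \eqref{quantization} and compute the trace of the product against the rank-one operator to extract the matrix entry; (3) assemble the matrix entries into the operator identity $A\pi(x)=\pi(x)\sigma(x,\pi)$ on $H_\pi^\infty$, justifying that agreement on the basis $B_\pi$ suffices since both sides are densely defined on $H_\pi^\infty=\textnormal{Dom}_{\textnormal{rest}}(\pi(\mathcal{R}))$; (4) invert $\pi(x)$.

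I expect the main obstacle to be the rigorous handling of step (1): making precise in what sense $\widehat{\pi_{ij}}(\pi')$ "is" $e_{\pi,j}\otimes e_{\pi,i}^{*}\,\delta_{[\pi']=[\pi]}$ when $\widehat{G}$ carries a continuous Plancherel measure, and correspondingly justifying the interchange of $\int_{\widehat{G}}$ with the (continuous) linear functional $g\mapsto (A g)(x)$ — one must use that $A$ maps $C^\infty(G)$ continuously to $C^\infty(G)$ together with the Schwartz-kernel representation in Definition \ref{DefiPSDO}, and that the $\pi_{ij}$ generate a suitable dense set so that $\sigma(x,\pi)$ is determined a.e.\ by its matrix entries. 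A secondary technical point is the "a.e.\ $\pi$" qualifier: since the $e_{\pi,k}$ are chosen via the spectral theorem for $\pi(\mathcal{R})$, one should note measurability of the field $\pi\mapsto B_\pi$ so that the identity \eqref{formulasymbol} holds on a set of full Plancherel measure. Everything else — the trace computation, unitarity of $\pi$, and the passage from matrix entries to the operator identity on $H_\pi^\infty$ — is routine given the framework of \cite{FischerRuzhanskyBook2015,RuzhanskyDelgadoCardona2019}.
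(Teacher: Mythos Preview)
The paper does not actually prove Theorem \ref{SymbolPseudo}; it merely states the result and refers to Theorem 3.2 of \cite{RuzhanskyDelgadoCardona2019} for the argument. Consequently there is no in-paper proof to compare your proposal against.

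That said, your strategy---apply $A$ to the matrix coefficients $\pi_{ij}$, read off the matrix entries of $\pi(x)\sigma(x,\pi)$, and invert $\pi(x)$---is exactly the standard route and is the one followed in the cited reference. You have also correctly isolated the genuine technical issue: on a noncompact graded group such as the Heisenberg group, the coefficients $\pi_{ij}$ are smooth and bounded but do \emph{not} belong to $\mathscr{S}(G)$ or $L^2(G)$, so the quantization formula \eqref{quantization} cannot be applied to $f=\pi_{ij}$ directly, and $\widehat{\pi_{ij}}$ exists only in a distributional sense (formally a Dirac mass on $\widehat{G}$ tensored with a rank-one operator). The rigorous argument in \cite{RuzhanskyDelgadoCardona2019} bypasses this by working instead with the right-convolution kernel $k_x$ and the Schwartz kernel identity $K_A(x,y)=k_x(y^{-1}x)$ from Definition \ref{DefiPSDO}: one writes $(A\pi_{ij})(x)$ as the pairing of $K_A(x,\cdot)$ against $\pi_{ij}$, uses that $k_x\in\mathscr{S}'(G)$ with $\widehat{k_x}(\pi)=\sigma(x,\pi)$, and computes the pairing to obtain $(\pi(x)\sigma(x,\pi)e_{\pi,i},e_{\pi,j})$. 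Your step (1) as written would need exactly this reformulation to be rigorous; the remaining steps (2)--(4) are fine.
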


\subsection{Global H\"ormander classes of pseudo-differential operators}

The main tool in the construction of global H\"ormander classes is the notion of difference operators. 
 Indeed, for every smooth function $q\in C^\infty(G)$ and $\sigma\in L^\infty_{a,b}(G),$ where $a,b\in\mathbb{R},$ the difference operator $\Delta_q$ acts on $\sigma$ according to the formula (see Definition 5.2.1 of \cite{FischerRuzhanskyBook2015}),
 \begin{equation*}
     \Delta_q\sigma(\pi)\equiv [\Delta_q\sigma](\pi):=\mathscr{F}_{G}(qf)(\pi),\,\textnormal{ for  a.e.  }\pi\in\widehat{G},\textnormal{ where }f:=\mathscr{F}_G^{-1}\sigma\,\,.
 \end{equation*}
We will reserve the notation $\Delta^{\alpha}$ for the difference operators defined by the functions $q_{\alpha}(x):=x^\alpha,$ while we denote  by $\tilde{\Delta}^{\alpha}$ the difference operators associated with the functions   $\tilde{q}_{\alpha}(x)=(x^{-1})^\alpha$. In particular, we have the Leibnitz rule,
\begin{equation}\label{differenceespcia;l}
    \Delta^{\alpha}(\sigma\tau)=\sum_{\alpha_1+\alpha_2=\alpha}c_{\alpha_1,\alpha_2}\Delta^{\alpha_1}(\sigma)\Delta^{\alpha_2}(\tau),\,\,\,\sigma,\tau\in L^\infty_{a,b}(\widehat{G}).
\end{equation} 
For our further analysis we will use the following property of the difference operators $\Delta^\alpha,$ (see e.g. \cite[page 20]{FischerFermanian-Kammerer2017}),
\begin{equation}\label{FischerFermanian-Kammerer2017}
    \Delta^{\alpha}(\sigma_{r\cdot})(\pi)=r^{ [\alpha] }(\Delta^{\alpha}\sigma)(r\cdot \pi),\,\,\,r>0\,\,\,\pi\in\widehat{G},
\end{equation}where we have denoted \begin{equation}\label{eeeeeeeee}
    \sigma_{r\cdot}:=\{\sigma(r\cdot \pi):\pi\in\widehat{G}\},\,\,r\cdot \pi(x):=\pi(D_r(x)),\,\,x\in G. 
\end{equation}

In terms of difference operators, the global H\"ormander classes introduced in \cite{FischerRuzhanskyBook2015} can be introduced as follows.
 Let $0\leq \delta,\rho\leq 1,$ and let $\mathcal{R}$ be a positive Rockland operator of homogeneous degree $\nu>0.$ If $m\in \mathbb{R},$ we say that the symbol $\sigma\in L^\infty_{a,b}(\widehat{G}), $ where $a,b\in\mathbb{R},$ belongs to the $(\rho,\delta)$-H\"ormander class of order $m,$ $S^m_{\rho,\delta}(G\times \widehat{G}),$ if for all $\gamma\in \mathbb{R},$ the following conditions
\begin{equation}\label{seminorm}
   p_{\alpha,\beta,\gamma,m}(\sigma)= \operatornamewithlimits{ess\, sup}_{(x,\pi)\in G\times \widehat{G}}\Vert \pi(1+\mathcal{R})^{\frac{\rho [\alpha] -\delta [\beta] -m-\gamma}{\nu}}[X_{x}^\beta \Delta^{\alpha}\sigma(x,\pi)] \pi(1+\mathcal{R})^{\frac{\gamma}{\nu}}\Vert_{\textnormal{op}}<\infty,
\end{equation}
hold true for all $\alpha$ and $\beta$ in $\mathbb{N}_0^n.$ The resulting class $S^m_{\rho,\delta}(G\times \widehat{G}),$ does not depend on the choice of the Rockland operator $\mathcal{R}.$ In particular (see Theorem 5.5.20 of \cite{FischerRuzhanskyBook2015}), the following facts are equivalent: 
\begin{itemize}
    \item[(A)] $\forall \alpha,\beta\in \mathbb{N}_{0}^n, \forall\gamma\in \mathbb{R}, $   $p_{\alpha,\beta,\gamma,m}(\sigma)<\infty;$
    
    \item[(B)]  $\forall \alpha,\beta\in \mathbb{N}_{0}^n, $   $p_{\alpha,\beta,0,m}(\sigma)<\infty;$
    
    \item[(C)]  $\forall \alpha,\beta\in \mathbb{N}_{0}^n, $   $p_{\alpha,\beta,m+\delta [\beta] -\rho [\alpha] ,m}(\sigma)<\infty;$
    \item[(D)] $\sigma\in {S}^{m}_{\rho,\delta}(G\times \widehat{G});$
\end{itemize}and in view of Theorem 13.16 of \cite{CR20}, the conditions (A), (B), (C) and (D) are equivalent to the following one:

\begin{itemize}
    \item[(E)]  $\forall \alpha,\beta\in \mathbb{N}_{0}^n, \exists \gamma_0\in \mathbb{R}, $   $p_{\alpha,\beta,\gamma_0,m}(\sigma)<\infty.$
\end{itemize}

 We will denote,
\begin{equation}
     \Vert \sigma\Vert_{k\,,S^{m}_{\rho,\delta}}:= \max_{ [\alpha] + [\beta] \leq k}\{  p_{\alpha,\beta,0,m}(\sigma)\}.
\end{equation}
\begin{remark}
In the abelian case $G=\mathbb{R}^n,$ endowed with its natural structure of abelian  group, and $\mathcal{R}=-\Delta_{x},$ $x\in \mathbb{R}^n,$ with $\Delta_{x}=\sum_{j=1}^{n}\partial_{x_i}^{2}$ being the usual Laplace operator on $\mathbb{R}^n,$ the classes defined via \eqref{seminorm}, agree with the well known H\"ormander classes on $\mathbb{R}^n$ (see e.g. H\"ormander \cite[Vol. 3]{HormanderBook34}). In this case the difference operators are the partial derivatives on $\mathbb{R}^n,$ (see Remark 5.2.13 and Example 5.2.6 of \cite{FischerRuzhanskyBook2015}).  
\end{remark}
For an arbitrary graded Lie group, the H\"ormander classes $S^m_{\rho,\delta}(G\times \widehat{G}),$ $m\in\mathbb{R},$ provide a symbolic calculus closed under compositions, adjoints, and existence of parametrices. The following theorem summarises the composition and the adjoint rules for global operators  as well as the Calder\'on-Vaillancourth theorem. The notion of ellipticity as developed in \cite{FischerRuzhanskyBook2015} and the  existence of parametrices will be presented in Section \ref{LEH}.
\begin{theorem}\label{calculus} Let $0\leqslant \delta<\rho\leqslant 1,$ and let us denote $\Psi^{m}_{\rho,\delta}:=\textnormal{Op}({S}^{m}_{\rho,\delta}(G\times \widehat{G})),$ for every $m\in \mathbb{R}.$ Then,
\begin{itemize}
    \item [(i)] The mapping $A\mapsto A^{*}:\Psi^{m}_{\rho,\delta}\rightarrow \Psi^{m}_{\rho,\delta}$ is a continuous linear mapping between Fr\'echet spaces and  the  symbol of $A^*,$ $\sigma_{A^*}(x,\pi)\equiv \widehat{A^{*}}(x,\pi) $ satisfies the asymptotic expansion,
 \begin{equation*}
    \widehat{A^{*}}(x,\pi)\sim \sum_{|\alpha|= 0}^\infty\Delta^{\alpha}X_x^\alpha (\widehat{A}(x,\pi)^{*}).
 \end{equation*} This means that, for every $N\in \mathbb{N},$ and all $\ell\in \mathbb{N},$
\begin{equation*}
   \Small{ \Delta^{\alpha_\ell}X_x^\beta\left(\widehat{A^{*}}(x,\pi)-\sum_{|\alpha|\leqslant N}\Delta^\alpha X_x^\alpha (\widehat{A}(x,\pi)^{*}) \right)\in {S}^{m-(\rho-\delta)(N+1)-\rho\ell+\delta[\beta]}_{\rho,\delta}(G\times\widehat{G}) },
\end{equation*} where $[\alpha_\ell]=\ell.$
\item [(ii)] The mapping $(A_1,A_2)\mapsto A_1\circ A_2: \Psi^{m_1}_{\rho,\delta}\times \Psi^{m_2}_{\rho,\delta}\rightarrow \Psi^{m_3}_{\rho,\delta}$ is a continuous bilinear mapping between Fr\'echet spaces, and the symbol of $A=A_{1}\circ A_2,$ satisfies the asymptotic expansion,
\begin{equation*}
    \sigma_A(x,\pi)\sim \sum_{|\alpha|= 0}^\infty(\Delta^\alpha\widehat{A}_{1}(x,\pi))(X_x^\alpha \widehat{A}_2(x,\pi)),
\end{equation*}this means that, for every $N\in \mathbb{N},$ and all $\ell \in\mathbb{N},$
\begin{align*}
    &\Delta^{\alpha_\ell}X_x^\beta\left(\sigma_A(x,\pi)-\sum_{|\alpha|\leqslant N}  (\Delta^{\alpha}\widehat{A}_{1}(x,\pi))(X_x^\alpha \widehat{A}_2(x,\pi))  \right)\\
    &\hspace{2cm}\in {S}^{m_1+m_2-(\rho-\delta)(N+1)-\rho\ell+\delta[\beta]}_{\rho,\delta}(G\times \widehat{G}),
\end{align*}for every  $\alpha_\ell \in \mathbb{N}_0^n$ with $[\alpha_\ell]=\ell.$
\item [(iii)] For  $0\leqslant \delta\leq  \rho\leqslant    1,$  $\delta\neq 1,$ let us consider a continuous linear operator $A:C^\infty(G)\rightarrow\mathscr{D}'(G)$ with symbol  $\sigma\in {S}^{0}_{\rho,\delta}(G\times \widehat{G})$. Then $A$ extends to a bounded operator from $L^2(G)$ to  $L^2(G).$ 
\end{itemize}

\end{theorem}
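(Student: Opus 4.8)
The plan is to treat the three assertions as what they are --- the composition theorem, the adjoint theorem, and the Calder\'on--Vaillancourt theorem for the classes $S^m_{\rho,\delta}(G\times\widehat G)$, all of which are established in \cite{FischerRuzhanskyBook2015} --- and to outline the architecture rather than carry out the estimates. The common engine is the kernel representation \eqref{pseudo}: an operator $\Op(\sigma)$ acts by $f\mapsto(f\ast k_x)(x)$ with $k_x=\mathscr F_G^{-1}\sigma(x,\cdot)$, so that assertions about symbols become assertions about the $x$-dependent convolution kernels $k_x$. The second ingredient is the Taylor expansion on homogeneous Lie groups of Folland--Stein \cite{FollandStein1982}: for smooth $h$ on $G$ one has $h(xy)=\sum_{[\alpha]<M}\tfrac{1}{\alpha!}(X^\alpha h)(x)\,q_\alpha(y)+R_M(x,y)$, with $q_\alpha$ the homogeneous polynomials dual to the monomials $y^\alpha$ and $R_M$ controlled by the $X^\beta h$ with $[\beta]\ge M$; this is the substitute for the Euclidean spatial Taylor expansion, while the difference operators $\Delta^\alpha$ of \eqref{differenceespcia;l} play the role of the frequency derivatives $\partial_\xi^\alpha$.

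For part (ii) I would write $(A_1A_2f)(x)$ as an iterated convolution against $k^{(1)}_x$ and the inner kernel $k^{(2)}_{(\cdot)}$, and Taylor-expand the spatial argument of $k^{(2)}$ about $x$ using the expansion above; the monomial factors $q_\alpha$ that appear multiply $k^{(1)}_x$ and, upon Fourier transforming, turn into $\Delta^\alpha\widehat A_1$, while the coefficients $X^\alpha_x k^{(2)}_x$ turn into $X_x^\alpha\widehat A_2$, so the $\alpha$-term is exactly $(\Delta^\alpha\widehat A_1)(X_x^\alpha\widehat A_2)$. Since $\Delta^\alpha$ lowers the order by $\rho[\alpha]$ and $X_x^\alpha$ raises it by $\delta[\alpha]$, this term lies in $S^{m_1+m_2-(\rho-\delta)[\alpha]}_{\rho,\delta}$, and tracking the remainder after $N$ steps --- using the scaling identity \eqref{FischerFermanian-Kammerer2017} and the seminorm bounds \eqref{seminorm} --- places it in the class displayed in the statement, with its seminorms bounded by a \emph{product} of finitely many seminorms of $\sigma_1$ and $\sigma_2$; that product bound is precisely the asserted bilinear continuity. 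Part (i) is obtained the same way after dualising the quantization formula \eqref{Quantization}: from $(\Op(\sigma)f,g)_{L^2}=(f,\Op(\sigma)^*g)_{L^2}$, expanding and re-quantising yields $\widehat{A^*}(x,\pi)\sim\sum_\alpha\Delta^\alpha X_x^\alpha(\widehat A(x,\pi)^*)$ together with the stated remainder. Throughout, the hypothesis $\delta<\rho$ guarantees that each successive term has strictly lower order, so the series is genuinely asymptotic, and one uses freely the equivalence of the seminorm families (A)--(E) to move between the $\gamma$-weighted norms.

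For part (iii) the natural route is the Cotlar--Stein lemma. Fix a positive Rockland operator $\mathcal R$ of degree $\nu$ and a dyadic decomposition $1=\sum_{j\ge0}\psi_j$ of its spectrum with $\psi_j$ supported where $\lambda\sim2^j$, and set $A_j=\Op(\sigma\,\psi_j(\pi(\mathcal R)))$, so that $A=\sum_jA_j$. Computing the symbols of $A_j^*A_k$ and $A_jA_k^*$ by the calculus of (i)--(ii) and estimating the almost-orthogonal pieces by a crude Hilbert--Schmidt or Schur bound, I expect an inequality of the form $\Vert A_j^*A_k\Vert_{\mathscr B(L^2)}+\Vert A_jA_k^*\Vert_{\mathscr B(L^2)}\le C\,2^{-\varepsilon|j-k|}\,\Vert\sigma\Vert_{k_0,S^0_{\rho,\delta}}^2$ for some $\varepsilon>0$ and some $k_0$ depending only on $G,\rho,\delta$; the Cotlar--Stein lemma then gives $\Vert A\Vert_{\mathscr B(L^2)}\le C\,\Vert\sigma\Vert_{k_0,S^0_{\rho,\delta}}$. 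The degenerate case $\rho=\delta$, for which the calculus of (i)--(ii) is unavailable, has to be argued separately, for instance by reducing to a Mikhlin--H\"ormander type multiplier theorem for $\mathcal R$ or to a Schur test on $k_x$ directly.

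The step I expect to be the main obstacle is the bookkeeping of orders in (i)--(ii): on a graded group the ``order'' of a symbol is encoded through conjugations by $\pi(1+\mathcal R)^{s/\nu}$ rather than through a scalar weight $\langle\xi\rangle$, so the facts that $\Delta^\alpha$ lowers the order by $\rho[\alpha]$ and $X_x^\beta$ raises it by $\delta[\beta]$, uniformly in $\pi$, have to be extracted from the spectral structure of $\pi(\mathcal R)$ and from the equivalences (A)--(E), and each remainder term must be placed in the correct class with seminorm bounds that are linear (respectively bilinear) in the data. For part (iii) the analogous difficulty is securing the almost-orthogonality decay with a usable exponent while keeping the number $k_0$ of seminorms and the control uniform in the dyadic decomposition.
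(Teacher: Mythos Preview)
The paper does not prove this theorem: it appears in the preliminaries section as a summary of results already established in \cite{FischerRuzhanskyBook2015}, and no argument is given beyond the citation. Your outline is therefore not being compared against an in-paper proof but against the architecture of the proofs in that reference, and at that level it is accurate: the adjoint and composition formulae are indeed obtained from the kernel representation $Af(x)=(f\ast k_x)(x)$ together with the homogeneous Taylor expansion of Folland--Stein, with the monomials $q_\alpha$ producing the difference operators $\Delta^\alpha$ on the Fourier side and the derivatives $X_x^\alpha$ landing on the second factor, exactly as you describe; the order bookkeeping via conjugation by $\pi(1+\mathcal R)^{s/\nu}$ and the equivalences (A)--(E) is also the correct mechanism.

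One point worth flagging in your plan for (iii): you propose to feed the calculus of (i)--(ii) into a Cotlar--Stein argument, but (i)--(ii) require $\delta<\rho$ while (iii) is stated for $\delta\le\rho$, $\delta\neq 1$. You do note that the case $\rho=\delta$ needs a separate argument, but in fact in \cite{FischerRuzhanskyBook2015} the $L^2$-boundedness is not derived from the symbolic calculus at all; it is proved directly from kernel/symbol estimates (so that it logically precedes the calculus rather than depending on it), which is why the hypothesis in (iii) is weaker than in (i)--(ii). Your Cotlar--Stein route would work in the range $\delta<\rho$, but it is a genuinely different organisation of the argument from the reference, and it does not by itself cover the endpoint $\rho=\delta$ that the statement includes.
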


\subsection{Ellipticity and construction of  parametrices}\label{LEH}

The ellipticity notion of the H\"ormander classes  in the graded setting \cite{FischerRuzhanskyBook2015} is defined by the invertibility of the symbol in high frequencies.  To introduce this notion, for $\Lambda\in \mathbb{R}^+_0$, and for the spectral measures $(E(\lambda))_{\lambda\geq 0}$ and $(E_\pi(\lambda))_{\lambda\geq 0}$ of the positive operators $\mathcal{R}$ and   $\pi(\mathcal{R}),$ respectively, $\pi \in \widehat{G},$ consider the closed subspace
\begin{equation}
    H_{\pi,\Lambda}^{\infty}:=E_{\pi}(\Lambda,\infty)H_{\pi}^\infty.
\end{equation}Now we record Definition 5.8.1 of \cite{FischerRuzhanskyBook2015} as follows.
\begin{definition}
Let $\sigma\in S^{m}_{\rho,\delta}(G\times \widehat{G}).$ The symbol $\sigma$ is elliptic with respect to $\mathcal{R}$ of elliptic order $m_0,$ if there is $\Lambda\in \mathbb{R}^+_0,$ such that for any $\gamma\in \mathbb{R},$ $x\in G,$ and a.e. $\pi\in \widehat{G},$ we have
\begin{equation}
    \forall u\in H_{\pi,\Lambda}^{\infty},\,\,\, \Vert\pi(1+\mathcal{R})^{\frac{\gamma}{\nu}} \sigma(x,\pi)u\Vert_{H_\pi}\geq C_\gamma\Vert \pi(1+\mathcal{R})^{\frac{\gamma+m_0}{\nu}} u\Vert_{H_\pi}.
\end{equation}We also say that the symbol, (and the corresponding operator $\textnormal{Op}(\sigma)$) is $(\mathcal{R},\Lambda, m_0)$-elliptic.
\end{definition}

Now, we will present a technical result about the existence of parametrices for  elliptic  operators. We denote by $ {S}^{-\infty}(G\times \widehat{G}):=\bigcap_{m\in \mathbb{R}}{S}^{m}_{\rho,\delta}(G\times \widehat{G})$ and by $ {\Psi}^{-\infty}(G\times \widehat{G}):=\bigcap_{m\in \mathbb{R}}{\Psi}^{m}_{\rho,\delta}(G\times \widehat{G}),$ the classes of smoothing symbols and smoothing operators, respectively.
\begin{proposition}\label{IesTParametrix} Let $m\in \mathbb{R},$ and let $0\leqslant \delta<\rho\leqslant 1.$  Let  $A=\textnormal{Op}(\sigma)(x,\pi)\in {\Psi}^{m}_{\rho,\delta}(G\times \widehat{G})$ be $(\mathcal{R},\Lambda, m_0)$-elliptic.  Then, there exists $B\in {\Psi}^{-m_0}_{\rho,\delta}(G\times \widehat{G}),$ such that $AB-I,BA-I\in {\Psi}^{-\infty}(G\times \widehat{G}). $ Moreover, the symbol $\tau:=\textnormal{Op}^{-1}(B)$ of $B$ satisfies the following asymptotic expansion
\begin{equation}\label{AE}
    \tau(x,\pi)\sim \sum_{N=0}^\infty\tau_{N}(x,\pi),\,\,\,(x,\pi)\in G\times \widehat{G},
\end{equation}where $\tau_{N}\in {S}^{-m_0-(\rho-\delta)N}_{\rho,\delta}(G\times \widehat{G})$ obeys to the inductive  formula
\begin{equation}\label{conditionelip}
    \tau_{N}(x,\pi)=-\sigma(x,\pi)^{-1}\left(\sum_{k=0}^{N-1}\sum_{|\gamma|=N-k}(\Delta^\gamma \sigma(x,\pi))(X_x^\gamma\tau_{k}(x,\pi))\right),\,\,N\geqslant 1,
\end{equation}with $ \tau_{0}(x,\pi):=E_{\pi}(\Lambda,\infty)\sigma(x,\pi)^{-1}:H_{\pi,\Lambda}^{\infty}\rightarrow H_{\pi}^{\infty}$ being the inverse of $\sigma(x,\pi)$ on $H_{\pi,\Lambda}^{\infty},$ as defined in Lemma 5.8.3 of \cite{FischerRuzhanskyBook2015}.  
\end{proposition}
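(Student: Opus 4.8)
The plan is to construct the parametrix symbol by the standard iterative scheme, using the ellipticity hypothesis to make sense of the leading term and the symbolic calculus of Theorem~\ref{calculus} to control the remainders. First I would set $\tau_0(x,\pi):=E_\pi(\Lambda,\infty)\sigma(x,\pi)^{-1}$, where the inverse on the subspace $H_{\pi,\Lambda}^\infty$ exists and belongs to $S^{-m_0}_{\rho,\delta}(G\times\widehat G)$ by the ellipticity of $\sigma$ together with Lemma~5.8.3 of \cite{FischerRuzhanskyBook2015}: indeed the defining inequality, applied with arbitrary $\gamma$, says precisely that $\pi(1+\mathcal R)^{\gamma/\nu}\sigma(x,\pi)^{-1}\pi(1+\mathcal R)^{-(\gamma-m_0)/\nu}$ is bounded uniformly in $(x,\pi)$, which is the $\alpha=\beta=0$ seminorm estimate; the estimates for the difference operators $\Delta^\alpha$ and the vector fields $X_x^\beta$ applied to $\sigma^{-1}$ follow from the Leibnitz rule \eqref{differenceespcia;l} applied to $\sigma\cdot\sigma^{-1}=E_\pi(\Lambda,\infty)$ together with an induction on $[\alpha]+[\beta]$. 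Then I would define $\tau_N$ for $N\geq 1$ by the inductive formula \eqref{conditionelip} and check by induction, using the order-counting in the composition rule (Theorem~\ref{calculus}(ii)) — namely that $\Delta^\gamma\sigma$ gains $\rho[\gamma]$ in regularity and $X_x^\gamma\tau_k$ loses at most $\delta[\gamma]$ — that each term $(\Delta^\gamma\sigma)(X_x^\gamma\tau_k)$ with $|\gamma|=N-k$ lies in $S^{m+(-m_0-(\rho-\delta)k)-(\rho-\delta)(N-k)}_{\rho,\delta}=S^{m-m_0-(\rho-\delta)N}_{\rho,\delta}$, hence after multiplication by $\sigma^{-1}\in S^{-m_0}_{\rho,\delta}$ one gets $\tau_N\in S^{-m_0-(\rho-\delta)N}_{\rho,\delta}(G\times\widehat G)$.

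Next I would pass from the formal series $\sum_N\tau_N$ to an actual symbol $\tau$ by an asymptotic summation (Borel-type) argument: since the orders $-m_0-(\rho-\delta)N$ tend to $-\infty$ as $N\to\infty$ (here $\rho>\delta$ is used in an essential way), there exists $\tau\in S^{-m_0}_{\rho,\delta}(G\times\widehat G)$ with $\tau-\sum_{N<M}\tau_N\in S^{-m_0-(\rho-\delta)M}_{\rho,\delta}$ for all $M$; this is the graded analogue of the classical asymptotic completeness of H\"ormander symbols, available in \cite{FischerRuzhanskyBook2015}. Setting $B:=\textnormal{Op}(\tau)\in\Psi^{-m_0}_{\rho,\delta}(G\times\widehat G)$, I would compute the symbol of $BA$ (and of $AB$) via the composition asymptotic expansion of Theorem~\ref{calculus}(ii). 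Writing $\sigma_{BA}(x,\pi)\sim\sum_\alpha(\Delta^\alpha\tau)(X_x^\alpha\sigma)$ and substituting the asymptotic expansion of $\tau$, the terms reorganize, by the very choice of the recursion \eqref{conditionelip}, into a telescoping sum: up to order $-(\rho-\delta)M$ all contributions cancel except the leading $\tau_0\sigma=E_\pi(\Lambda,\infty)$, whose difference from the identity $I$ is supported on the finite-frequency part $E_\pi[0,\Lambda]$ and is therefore a smoothing symbol. Letting $M\to\infty$ gives $\sigma_{BA}-I\in S^{-\infty}(G\times\widehat G)$, i.e. $BA-I\in\Psi^{-\infty}(G\times\widehat G)$; the same computation with the roles interchanged (or a standard left/right parametrix equivalence argument modulo smoothing) gives $AB-I\in\Psi^{-\infty}(G\times\widehat G)$.

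The main obstacle I anticipate is the bookkeeping in the telescoping cancellation, i.e.\ verifying carefully that the recursion \eqref{conditionelip} is exactly the one that kills all intermediate-order terms in the composition expansion of $\sigma_{BA}$; this requires tracking the multi-index sums $\sum_{k+|\gamma|=N}$ against the composition sum $\sum_\alpha$ and invoking the Leibnitz rule \eqref{differenceespcia;l} to regroup $\Delta^\alpha(\tau_k)$ correctly, and one must be attentive that the $\Delta^\alpha$ acts on products in the twisted way recorded in \eqref{differenceespcia;l} rather than as an ordinary derivation. A secondary technical point is the justification that the finite-frequency correction $E_\pi[0,\Lambda]\,(\textnormal{anything bounded})$ is genuinely in $S^{-\infty}$: this follows because on the range of $E_\pi[0,\Lambda]$ the operator $\pi(1+\mathcal R)^{s/\nu}$ is bounded with norm controlled by $(1+\Lambda)^{s/\nu}$ uniformly in $\pi$ for every real $s$, so every seminorm \eqref{seminorm} of every order $m$ is finite, and the difference operators and vector fields only improve matters since $\tau_0$ already has all the requisite regularity from the first paragraph.
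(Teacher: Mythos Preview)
The paper does not supply its own proof of this proposition; it is recorded as a preliminary fact, with the leading term $\tau_0$ referred to Lemma~5.8.3 of \cite{FischerRuzhanskyBook2015}, and the construction is the standard parametrix scheme carried out in \S5.8 of that book. Your plan reproduces exactly that argument and is correct in outline: invert on high frequencies, build $\tau_N$ recursively so that the $N$-th level of the composition expansion cancels, asymptotically sum, and observe that the low-frequency defect $I-E_\pi(\Lambda,\infty)$ is smoothing.

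Two small points worth tightening. First, the recursion \eqref{conditionelip} as written is tailored to make $AB-I$ smoothing: rearranging it gives $\sum_{k\le N}\sum_{|\gamma|=N-k}(\Delta^\gamma\sigma)(X_x^\gamma\tau_k)=0$, which is precisely the level-$N$ part of the expansion $\sigma_{AB}\sim\sum_\alpha(\Delta^\alpha\sigma)(X_x^\alpha\tau)$ from Theorem~\ref{calculus}(ii). You instead expand $\sigma_{BA}\sim\sum_\alpha(\Delta^\alpha\tau)(X_x^\alpha\sigma)$, whose cancellation would require the mirror recursion with $\sigma^{-1}$ on the right; your parenthetical invocation of the left/right parametrix equivalence repairs this, but it is cleaner to check $AB$ directly. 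Second, in your order count you multiply an element of $S^{m-m_0-(\rho-\delta)N}_{\rho,\delta}$ by $\sigma^{-1}\in S^{-m_0}_{\rho,\delta}$ and claim the result lies in $S^{-m_0-(\rho-\delta)N}_{\rho,\delta}$; the product is actually in $S^{\,m-2m_0-(\rho-\delta)N}_{\rho,\delta}$, so the stated order in \eqref{conditionelip} tacitly uses $m=m_0$ (the generic elliptic case). This is a quirk of the proposition's formulation rather than of your method.
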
 
 The following is  a criterion of global  ellipticity for all the parameters $\Lambda \in \mathbb{R}^+_0,$ see Corollary 5.8.4 of \cite{FischerRuzhanskyBook2015}.
\begin{proposition} Let $\sigma\in S^{m}_{\rho,\delta}(G\times \widehat{G}),$ and let us assume, that for a.e. $\pi\in \widehat{G},$ and all $x\in G,$ 
$$\sigma(x,\pi):H_{\pi}^{\infty}\rightarrow H_{\pi}^{\infty}$$ is one-to-one and consider its inverse
$$\sigma(x,\pi)^{-1}: H_{\pi}^{\infty}\rightarrow H_{\pi}^\infty.$$ 
The following assertions are equivalent.
\begin{itemize}
    \item $\forall \gamma \in \mathbb{R},$ the restriction operator 
     $$\pi(1+\mathcal{R})^{\frac{\gamma+m_0}{\nu}} \sigma(x,\pi)^{-1}\pi(1+\mathcal{R})^{-\frac{\gamma}{\nu}} |_{H_{\pi}^\infty}:H_{\pi}^{\infty}\rightarrow H_{\pi}$$ admits a bounded extension to $H_\pi,$ with the operator norm uniformly bounded in $(x,\pi),$ that is,
\begin{align*}
    &\sup_{(x,\pi)\in G\times \widehat{G}}\Vert \pi(1+\mathcal{R})^{\frac{\gamma+m_0}{\nu}} \sigma(x,\pi)^{-1}\pi(1+\mathcal{R})^{\frac{\gamma}{\nu}}\Vert_{\textnormal{op}}<\infty.
\end{align*} 
\item  $\sigma$  is $(\mathcal{R},\Lambda, m_0)$-elliptic, for all $\Lambda\in \mathbb{R}^+_0$.
\end{itemize}

\end{proposition}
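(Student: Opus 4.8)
The plan is to recognise the two conditions as one and the same estimate, written once for $\sigma(x,\pi)$ and once for $\sigma(x,\pi)^{-1}$, related by the substitution $v=\sigma(x,\pi)u$ together with a conjugation by powers of $\pi(1+\mathcal{R})$. Throughout, $x\in G$ is fixed and $\pi\in\widehat{G}$ is fixed up to a $d\pi$-null set; I use that every real power $\pi(1+\mathcal{R})^{s}$ is a bijection of the smooth-vector space $H_\pi^\infty$, and that by hypothesis $\sigma(x,\pi)$ and $\sigma(x,\pi)^{-1}$ are bijections of $H_\pi^\infty$ as well, so that all the operator products below are genuine linear maps on the dense subspace $H_\pi^\infty\subset H_\pi$; consequently a uniform bound on $H_\pi^\infty$ is the same as the existence of the claimed bounded extension to $H_\pi$.

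For the implication from global ellipticity to the uniform bound it suffices to use the hypothesis with $\Lambda=0$. For a nontrivial representation $\pi$ the positive operator $\pi(\mathcal{R})$ is injective with discrete spectrum (Remark \ref{symbolremark}), hence its spectrum is contained in $(0,\infty)$, so $E_\pi(0,\infty)=I$ and $H_{\pi,0}^\infty=E_\pi(0,\infty)H_\pi^\infty=H_\pi^\infty$; since the trivial representation is $d\pi$-null, $(\mathcal{R},0,m_0)$-ellipticity yields, for every $\gamma\in\mathbb{R}$ and a.e. $\pi$,
\[
\Vert\pi(1+\mathcal{R})^{\frac{\gamma}{\nu}}\sigma(x,\pi)u\Vert_{H_\pi}\ \geq\ C_\gamma\,\Vert\pi(1+\mathcal{R})^{\frac{\gamma+m_0}{\nu}}u\Vert_{H_\pi},\qquad u\in H_\pi^\infty .
\]
Given $w\in H_\pi^\infty$, set $v:=\pi(1+\mathcal{R})^{-\frac{\gamma}{\nu}}w\in H_\pi^\infty$ and $u:=\sigma(x,\pi)^{-1}v\in H_\pi^\infty$; then $\sigma(x,\pi)u=v$, so $\pi(1+\mathcal{R})^{\frac{\gamma}{\nu}}\sigma(x,\pi)u=w$, and the displayed estimate becomes $\Vert\pi(1+\mathcal{R})^{\frac{\gamma+m_0}{\nu}}\sigma(x,\pi)^{-1}\pi(1+\mathcal{R})^{-\frac{\gamma}{\nu}}w\Vert_{H_\pi}\leq C_\gamma^{-1}\Vert w\Vert_{H_\pi}$. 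As $C_\gamma$ does not depend on $(x,\pi)$, the operator $\pi(1+\mathcal{R})^{\frac{\gamma+m_0}{\nu}}\sigma(x,\pi)^{-1}\pi(1+\mathcal{R})^{-\frac{\gamma}{\nu}}$ is bounded on $H_\pi^\infty$ by $C_\gamma^{-1}$ uniformly in $(x,\pi)$ and extends to $H_\pi$ with the same bound; since $\gamma$ is arbitrary, this is the first assertion.

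For the converse, write $M_\gamma<\infty$ for the supremum in the first assertion. Fix $\Lambda\in\mathbb{R}^+_0$, $\gamma\in\mathbb{R}$, $x\in G$, a.e. $\pi$, and $u\in H_{\pi,\Lambda}^\infty\subset H_\pi^\infty$. Putting $v:=\sigma(x,\pi)u\in H_\pi^\infty$ and $w:=\pi(1+\mathcal{R})^{\frac{\gamma}{\nu}}v\in H_\pi^\infty$, so that $u=\sigma(x,\pi)^{-1}\pi(1+\mathcal{R})^{-\frac{\gamma}{\nu}}w$, we obtain
\[
\Vert\pi(1+\mathcal{R})^{\frac{\gamma+m_0}{\nu}}u\Vert_{H_\pi}=\Vert\pi(1+\mathcal{R})^{\frac{\gamma+m_0}{\nu}}\sigma(x,\pi)^{-1}\pi(1+\mathcal{R})^{-\frac{\gamma}{\nu}}w\Vert_{H_\pi}\leq M_\gamma\Vert w\Vert_{H_\pi},
\]
while $\Vert w\Vert_{H_\pi}=\Vert\pi(1+\mathcal{R})^{\frac{\gamma}{\nu}}\sigma(x,\pi)u\Vert_{H_\pi}$; together these give the defining inequality of $(\mathcal{R},\Lambda,m_0)$-ellipticity with $C_\gamma:=M_\gamma^{-1}$, and since neither $M_\gamma$ nor the computation involves $\Lambda$, $\sigma$ is $(\mathcal{R},\Lambda,m_0)$-elliptic for every $\Lambda\in\mathbb{R}^+_0$.

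I expect no genuine obstacle, only the bookkeeping of domains: one must check that $\pi(1+\mathcal{R})^{s}$ preserves the common core $H_\pi^\infty$ for all $s\in\mathbb{R}$, invoke the standing assumption that $\sigma(x,\pi)^{\pm1}$ do likewise, and record that $H_{\pi,0}^\infty=H_\pi^\infty$ for a.e. $\pi$ --- the point where the Rockland injectivity of $\pi(\mathcal{R})$ is used. With those facts the equivalence is the elementary substitution performed above, uniformly over $(x,\pi)$ since every constant that appears is.
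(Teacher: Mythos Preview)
Your argument is correct. Note, however, that the paper does not prove this proposition at all: it is stated as a quotation of Corollary~5.8.4 of \cite{FischerRuzhanskyBook2015}, so there is no ``paper's own proof'' to compare against. That said, your write-up is exactly the natural direct argument one would expect for this statement, and the reduction you make---observing that $(\mathcal{R},\Lambda,m_0)$-ellipticity for all $\Lambda\geq 0$ is equivalent to $(\mathcal{R},0,m_0)$-ellipticity because $H_{\pi,\Lambda}^\infty$ shrinks as $\Lambda$ grows, and that $H_{\pi,0}^\infty=H_\pi^\infty$ for nontrivial $\pi$ by the Rockland injectivity and discreteness of the spectrum---is the right way to collapse the ``for all $\Lambda$'' quantifier. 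After that, the equivalence is indeed nothing more than the substitution $u\leftrightarrow\sigma(x,\pi)^{-1}\pi(1+\mathcal{R})^{-\gamma/\nu}w$ on the dense core $H_\pi^\infty$, with the domain bookkeeping you flag in the last paragraph. One cosmetic remark: the displayed supremum in the paper's first bullet has $\pi(1+\mathcal{R})^{\frac{\gamma}{\nu}}$ where the preceding text has $\pi(1+\mathcal{R})^{-\frac{\gamma}{\nu}}$; you have (correctly) worked with the minus sign throughout.
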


\begin{definition} Let  $\sigma\in S^{m}_{\rho,\delta}(G\times \widehat{G}),$ be a symbol such that  for a.e. $\pi\in \widehat{G},$ and all $x\in G,$ 
$$\sigma(x,\pi):H_{\pi}^{\infty}\rightarrow H_{\pi}^{\infty}$$ is one-to-one. If $\sigma$ is $(\mathcal{R},\Lambda, m_0)$-elliptic for all $\Lambda\in \mathbb{R}^+_0,$ (that is $\sigma$ is a $(\mathcal{R},\Lambda, 0)$-elliptic symbol)  we say that $\sigma$ is a  globally elliptic symbol  or elliptic for short.   

\end{definition}
\begin{example}For all $\gamma_1,\gamma_2\in \mathbb{R},$ the operator $(1+\mathcal{R})^{\frac{\gamma_1+i\gamma_2}{\nu}}$ has a global symbol defined by the $\widehat{G}$-field of operators
\begin{equation}
  \sigma_{(1+\mathcal{R})^{\frac{\gamma_1+i\gamma_2}{\nu}}}:=  \{(1+\pi(\mathcal{R}))^{\frac{\gamma_1+i\gamma_2}{\nu}}:\pi\in\widehat{G}\}\in S^{\gamma_1}_{1,0}(G\times \widehat{G}),
\end{equation} which is  globally elliptic 
(see Proposition 5.8.2 of \cite{FischerRuzhanskyBook2015}).

\end{example}
\begin{remark}
The $L^p$-mapping properties for the classes $S^{m}_{\rho,\delta}(G\times \widehat{G})$ have been studied in \cite{RuzhanskyDelgadoCardona2019} with the case of $G=\mathbb{R}^n$ proved in \cite{Fefferman1973}.
\end{remark}

\section{Parameter ellipticity on an analytic curve}\label{ParameterGraded} 
In this section we start the construction of our global functional calculus. Firstly, we require the notion of ellipticity with respect to a parameter $\Lambda\in \mathbb{R}.$
We will use
\begin{equation}
    \mathcal{M}:=(1+\mathcal{R})^{\frac{1}{\nu}},
\end{equation} for the Riesz potential of order one associated to the positive Rockland operator $\mathcal{R}$ of homogeneous order $\nu>0.$ For instance, for any $s\in \mathbb{R},$ the field of vectors
\begin{equation}
   \{\pi( \mathcal{M})^s:=(1+\pi(\mathcal{R}))^{\frac{s}{\nu}}:\pi\in \widehat{G}\},
\end{equation}is the global symbol of the operator $\mathcal{M}^s=\int\limits_0^{\infty}(1+\mathcal{\lambda})^{\frac{s}{\nu}}dE(\lambda),$
where $\{E(\lambda)\}_{\lambda>0}$ denotes the spectral measure of $\mathcal{R}.$ The sequence $\{\nu_{jj}(\pi)\}_{j\in \mathbb{N}_0}$ denotes the spectrum of the operator $\pi(\mathcal{R}),$  and by the spectral mapping theorem the sequence $\{(1+\nu_{jj}(\pi))^{\frac{1}{\nu}}\}_{j\in \mathbb{N}_0}$ determines the spectrum of the operator $\pi(\mathcal{M}),$ $\pi \in \widehat{G}.$

To develop the pseudo-differential functional calculus we need a more broad notion of ellipticity. By following the  approach in \cite{RuzhanskyWirth2014}, we present in our pseudo-differential context the notion of parameter  ellipticity. 
\begin{definition} Let $m>0,$ and let $0\leqslant \delta<\rho\leqslant 1.$
Let $\Lambda=\{\gamma(t):t\in I \}$  be an analytic curve in the complex plane $\mathbb{C}$, where $I=[a,b],$ $-\infty<a\leqslant b<\infty,$ $I=[a,\infty),$ $I=(-\infty,b]$ or $I=(-\infty,\infty).$ If $I$ is a finite interval we assume that $\Lambda$ is a closed curve. For  simplicity, if $I$ is an infinite interval we assume that $\Lambda$ is homotopy equivalent to the line $\Lambda_{i\mathbb{R}}:=\{iy:-\infty<y<\infty\}.$  Let  $a=a(x,\pi)\in {S}^{m}_{\rho,\delta}(G\times \widehat{G}).$ Assume also that  
$$
{\boxed{
 R_{\lambda}(x,\pi)^{-1}:=a(x,\pi)-\lambda \textnormal{ is invertible for every } (x,\pi)\in G\times\widehat{G},$ and all $\lambda \in  \Lambda.}} 
$$
We say that $a$ is parameter  elliptic  with respect to $\Lambda,$ if
\begin{equation*}
    \sup_{\lambda\in \Lambda}\sup_{(x,\pi)\in G\times \widehat{G}}\Vert (|\lambda|^{\frac{1}{m}}+\pi(\mathcal{M}))^{m}R_{\lambda}(x,\pi)\Vert_{\textnormal{op}}<\infty.
\end{equation*}
\end{definition}
\begin{remark}
Observe that for $a=b=0,$ $I=\{0\},$ and for the trivial curve $\gamma(t)=0,$ that $a$ is parameter  elliptic  with respect to $\Lambda=\{0\},$ is equivalent to say that $a$ is   elliptic. The following theorem describes the resolvent operator $R_{\lambda}(x,\pi)$ of a parameter  elliptic  symbol $a.$
\end{remark}

\begin{theorem}\label{lambdalambdita} Let $m>0,$ and let $0\leqslant \delta<\rho\leqslant 1.$ If $a$ is parameter  elliptic  with respect to $\Lambda,$ the following estimate 
\begin{equation*}
   \sup_{\lambda\in \Lambda}\sup_{(x,\pi)\in G\times \widehat{G}}\Vert (|\lambda|^{\frac{1}{m}}+\pi(\mathcal{M}))^{m(k+1)}\pi(\mathcal{M})^{\rho[\alpha]-\delta[\beta]}\partial_{\lambda}^kX_x^\beta\Delta^{\alpha}R_{\lambda}(x,\pi)\Vert_{\textnormal{op}}<\infty,
\end{equation*}holds true for all $\alpha,\beta\in \mathbb{N}_0^n$ and $k\in \mathbb{N}_0.$

\end{theorem}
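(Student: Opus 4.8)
The plan is to prove the estimate by induction, peeling off the derivatives in $\lambda$, the difference operators $\Delta^\alpha$, and the vector-field derivatives $X_x^\beta$ one at a time, using the resolvent identity and the symbolic calculus of Theorem \ref{calculus}. The base case $\alpha=\beta=0$, $k=0$ is precisely the definition of parameter ellipticity. For the inductive step I would differentiate the defining identity $R_\lambda(x,\pi)(a(x,\pi)-\lambda)=\mathrm{Id}$ (and its left-handed counterpart) on $H_\pi^\infty$.

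**Key steps.** First, differentiating in $\lambda$ gives $\partial_\lambda R_\lambda = R_\lambda (\partial_\lambda(a-\lambda)) R_\lambda = R_\lambda R_\lambda$, since $\partial_\lambda(a-\lambda)=-\mathrm{Id}$; iterating, $\partial_\lambda^k R_\lambda = k!\, R_\lambda^{k+1}$, which combined with the base estimate already yields the case $\alpha=\beta=0$ with the weight $(|\lambda|^{1/m}+\pi(\mathcal{M}))^{m(k+1)}$. Second, for the difference operators I would use that $\Delta^\alpha$ applied to a product obeys the Leibniz rule \eqref{differenceespcia;l}; applying $\Delta^\alpha$ to $R_\lambda(a-\lambda)=\mathrm{Id}$ and solving for the top-order term $\Delta^\alpha R_\lambda$, one expresses it as $-R_\lambda \sum (\Delta^{\alpha_1}(a-\lambda))(\Delta^{\alpha_2}R_\lambda)$ with $|\alpha_2|<|\alpha|$, noting $\Delta^{\alpha_1}(a-\lambda)=\Delta^{\alpha_1}a$ for $\alpha_1\neq 0$ (the parameter $\lambda$ is killed by nontrivial differences). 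The gain of $\pi(\mathcal{M})^{\rho[\alpha_1]}$ in the order of $\Delta^{\alpha_1}a$ — available because $a\in S^m_{\rho,\delta}$ — is what produces the weight $\pi(\mathcal{M})^{\rho[\alpha]}$ on the left, while the loss $\pi(\mathcal{M})^{\delta[\beta_1]}$ from each $X_x^{\beta_1}a$ accounts for the factor $\pi(\mathcal{M})^{-\delta[\beta]}$. Third, the $x$-derivatives $X_x^\beta$ commute with $\Delta^\alpha$ and $\partial_\lambda$, and applying them to the identity and using the Leibniz rule for $X_x$ gives an analogous recursion; one interleaves all three inductions, ordering them e.g. by $|\alpha|+|\beta|+k$.

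**Main obstacle.** The bookkeeping of the weights is the delicate part: one must check that every term appearing in the recursion, when sandwiched between the appropriate powers of $\pi(\mathcal{M})$ and $(|\lambda|^{1/m}+\pi(\mathcal{M}))$, remains bounded uniformly in $(x,\pi,\lambda)$. Concretely, each $R_\lambda$ factor carries a weight $(|\lambda|^{1/m}+\pi(\mathcal{M}))^{-m}$ by the base case, each $\Delta^{\alpha_i}a$ carries $\pi(\mathcal{M})^{m-\rho[\alpha_i]}$, and one needs the elementary inequality $\pi(\mathcal{M})\le |\lambda|^{1/m}+\pi(\mathcal{M})$ (valid as an operator inequality since both are functions of $\pi(\mathcal{R})$ with nonnegative spectrum) to convert $\pi(\mathcal{M})^m$ factors from the symbol estimates into factors of $(|\lambda|^{1/m}+\pi(\mathcal{M}))^m$ that can be absorbed by the resolvents — losing exactly the right number so that the total power $m(k+1)$ survives on the left. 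Care is also needed because these are unbounded operators: all manipulations should be read on $H_\pi^\infty$, where $\pi(\mathcal{M})^s$ and $R_\lambda$ act, and the operator-norm estimates $\Vert\cdot\Vert_{\mathrm{op}}$ are then obtained by density as in the definition of $L^\infty_{a,b}(\widehat G)$. Once the weight arithmetic is set up correctly the induction closes without difficulty.
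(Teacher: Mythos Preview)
Your inductive strategy via the resolvent identity and Leibniz rule is natural, but it is a genuinely different route from the paper's, and the weight bookkeeping you flag as the main obstacle is not quite resolved by what you sketch. The parameter-ellipticity hypothesis is \emph{one-sided}: it gives only $\Vert(|\lambda|^{1/m}+\pi(\mathcal{M}))^{m}R_\lambda\Vert_{\textnormal{op}}<\infty$. In your recursion $\Delta^\alpha R_\lambda=-R_\lambda\sum c_{\alpha_1,\alpha_2}(\Delta^{\alpha_1}a)(\Delta^{\alpha_2}R_\lambda)$ you must, after placing the weight $(|\lambda|^{1/m}+\pi(\mathcal{M}))^{m}\pi(\mathcal{M})^{\rho[\alpha]}$ on the far left, commute a positive power $\pi(\mathcal{M})^{\rho[\alpha]}$ through the leading $R_\lambda$ in order to reach the inductive hypothesis on $\Delta^{\alpha_2}R_\lambda$. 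Since $R_\lambda(x,\pi)$ does not in general commute with $\pi(\mathcal{M})$, this step requires the two-sided bound $\sup_\lambda\Vert\pi(\mathcal{M})^{\gamma}(|\lambda|^{1/m}+\pi(\mathcal{M}))^{m}R_\lambda\pi(\mathcal{M})^{-\gamma}\Vert_{\textnormal{op}}<\infty$ for $\gamma>0$, which is neither assumed nor delivered by the operator inequality $\pi(\mathcal{M})\le|\lambda|^{1/m}+\pi(\mathcal{M})$ (that inequality only lets you \emph{decrease} the left weight, which goes the wrong way here). The induction can be repaired, but it needs an additional argument establishing uniform-in-$\lambda$ Sobolev-scale mapping for $R_\lambda$ that you have not supplied.

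The paper sidesteps this commutation issue entirely by a different device: it splits into $|\lambda|\le1$ and $|\lambda|>1$. For $|\lambda|\le1$ one factors off $(|\lambda|^{1/m}+\pi(\mathcal{M}))^{m(k+1)}\pi(\mathcal{M})^{-m(k+1)}$, which is bounded by functional calculus, and then uses that $\partial_\lambda^kR_\lambda$ has top term $R_\lambda^{k+1}\in S^{-m(k+1)}_{\rho,\delta}$ together with compactness of $\Lambda_1$ to get uniformity. For $|\lambda|>1$ one rescales: with $\theta=|\lambda|^{-1}$ and $\omega=\lambda/|\lambda|$, one has $(a-\lambda)^{-1}=\theta(\theta a-\omega)^{-1}$ and $(|\lambda|^{1/m}+\pi(\mathcal{M}))^{m}=|\lambda|(1+\theta^{1/m}\pi(\mathcal{M}))^{m}$, so the estimate reduces to uniform $S^{-m}_{\rho,\delta}$ bounds for $(\theta a-\omega)^{-1}$ over the compact parameter range $\theta\in(0,1]$, $|\omega|=1$. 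The rescaling is what buys $\lambda$-uniformity without ever having to push a weight through $R_\lambda$.
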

\begin{proof}
    We will split the proof in the cases $|\lambda|\leqslant 1,$  and $|\lambda|> 1,$ where $\lambda\in \Lambda.$ It is possible however that one of these two cases could be trivial in the sense that $\Lambda_{1}:=\{\lambda\in \Lambda:|\lambda|\leqslant 1\}$ or $\Lambda_{1}^{c}:=\{\lambda\in \Lambda:|\lambda|> 1\}$ could be empty sets. In such a case the proof is self-contained in the situation that we will consider where we assume that $\Lambda_{1}$ and $\Lambda_{1}^c$ are not trivial sets.   For $|\lambda|\leqslant 1,$ observe that
\begin{align*}
    &\Vert (|\lambda|^{\frac{1}{m}}+\pi(\mathcal{M}))^{m(k+1)}\pi(\mathcal{M})^{\rho[\alpha]-\delta[\beta]}\partial_{\lambda}^kX_x^\beta\Delta^{\alpha}R_{\lambda}(x,\pi)\Vert_{\textnormal{op}}\\
    &=\Vert (|\lambda|^{\frac{1}{m}}+\pi(\mathcal{M}))^{m(k+1)}\pi(\mathcal{M})^{-m(k+1)}\pi(\mathcal{M})^{m(k+1)+\rho[\alpha]-\delta[\beta]}\partial_{\lambda}^kX_x^\beta\Delta^{\alpha}R_{\lambda}(x,\pi)\Vert_{\textnormal{op}}\\
    &\leqslant \Vert (|\lambda|^{\frac{1}{m}}+\pi(\mathcal{M}))^{m(k+1)}\pi(\mathcal{M})^{-m(k+1)}\Vert_{\textnormal{op}}\\
    &\hspace{5cm}\times\Vert\pi(\mathcal{M})^{m(k+1)+\rho[\alpha]-\delta[\beta]}\partial_{\lambda}^kX_x^\beta\Delta^{\alpha}R_{\lambda}(x,\pi)\Vert_{\textnormal{op}}.
\end{align*} We have
\begin{align*}
    &\Vert (|\lambda|^{\frac{1}{m}}+\pi(\mathcal{M}))^{m(k+1)}\pi(\mathcal{M})^{-m(k+1)}\Vert_{\textnormal{op}}\\
    &= \Vert (|\lambda|^{\frac{1}{m}}\pi(\mathcal{M})^{-1}+I)^{m(k+1)}\Vert_{\textnormal{op}}\leqslant \Vert |\lambda|^{\frac{1}{m}}\pi(\mathcal{M})^{-1}+I\Vert^{m(k+1)}_{\textnormal{op}}\\
    &\leqslant \sup_{|\lambda|\in [0,1]}\sup_{1\leqslant j\leqslant \infty}(|\lambda|^{\frac{1}{m}}(1+\nu_{jj}(\pi))^{-\frac{1}{\nu}}+1))^{k(m+1)}\\
    &=O(1).
\end{align*} On the other hand, we can prove that
\begin{align*}
   \Vert\pi(\mathcal{M})^{m(k+1)+\rho[\alpha]-\delta[\beta]}\partial_{\lambda}^kX_x^\beta\Delta^{\alpha}R_{\lambda}(x,\pi)\Vert_{\textnormal{op}}=O(1).
\end{align*} For $k=1,$ $\partial_{\lambda}R_{\lambda}(x,\pi)= R_{\lambda}(x,\pi)^{2}.$ This can be deduced from the Leibniz rule, indeed,
\begin{align*}
 0=\partial_{\lambda}(R_{\lambda}(x,\pi)(a(x,\pi)-\lambda))=(\partial_{\lambda}R_{\lambda}(x,\pi)) (a(x,\pi)-\lambda)+ R_{\lambda}(x,\pi)(-1) 
\end{align*}implies that
    \begin{align*}
 -\partial_{\lambda}(R_{\lambda}(x,\pi))(a(x,\pi)-\lambda)= R_{\lambda}(x,\pi)(-1). 
\end{align*} Because $(a(x,\pi)-\lambda)=R_{\lambda}(x,\pi)^{-1}$ the identity for the first derivative of $R_\lambda,$ $\partial_{\lambda}R_{\lambda}$  follows. So, from the chain rule we obtain that the term of higher order expanding the derivative   $ \partial_{\lambda}^kR_{\lambda} $ is $ R_{\lambda}^{k+1}.$ Because $R_{\lambda}\in S^{-m}_{\rho,\delta}(G\times \widehat{G}),$ the pseudo-differential calculus implies that $R_{\lambda}^{k+1}\in S^{-m(k+1)}_{\rho,\delta}(G\times \widehat{G}).$ This fact, and the compactness of $\Lambda_1\subset \mathbb{C},$ provide us the uniform estimate 
    \begin{equation*}
   \sup_{\lambda\in \Lambda_1}\sup_{(x,\pi)\in G\times \widehat{G}}\Vert\pi(\mathcal{M})^{m(k+1)+\rho[\alpha]-\delta[\beta]}\partial_{\lambda}^kX_x^\beta\Delta^{\alpha}R_{\lambda}(x,\pi)\Vert_{\textnormal{op}}<\infty.
\end{equation*}Now, we will analyse the situation for $\lambda\in \Lambda_1^c.$ We will use induction over $k$ in order to prove that
\begin{equation*}
   \sup_{\lambda\in \Lambda_1^c}\sup_{(x,\pi)\in G\times \widehat{G}}\Vert (|\lambda|^{\frac{1}{m}}+\pi(\mathcal{M}))^{m(k+1)}\pi(\mathcal{M})^{\rho[\alpha]-\delta[\beta]}\partial_{\lambda}^kX_x^\beta\Delta^{\alpha}R_{\lambda}(x,\pi)\Vert_{\textnormal{op}}<\infty.
\end{equation*}
For $k=0$ notice that
\begin{align*}
     &\Vert (|\lambda|^{\frac{1}{m}}+\pi(\mathcal{M}))^{m(k+1)}\pi(\mathcal{M})^{\rho[\alpha]-\delta[\beta]}\partial_{\lambda}^kX_x^\beta\Delta^{\alpha}R_{\lambda}(x,\pi)\Vert_{\textnormal{op}}\\
     &=\Vert (|\lambda|^{\frac{1}{m}}+\pi(\mathcal{M}))^{m}\pi(\mathcal{M})^{\rho[\alpha]-\delta[\beta]}X_x^\beta\Delta^{\alpha}(a(x,\pi)-\lambda)^{-1}\Vert_{\textnormal{op}},
\end{align*}and denoting $\theta=\frac{1}{|\lambda|},$ $\omega=\frac{\lambda}{|\lambda|},$ we have
 \begin{align*}
     &\Vert (|\lambda|^{\frac{1}{m}}+\pi(\mathcal{M}))^{m(k+1)}\pi(\mathcal{M})^{\rho[\alpha]-\delta[\beta]}\partial_{\lambda}^kX_x^\beta\Delta^{\alpha}R_{\lambda}(x,\pi)\Vert_{\textnormal{op}}\\
     &=\Vert (|\lambda|^{\frac{1}{m}}+\pi(\mathcal{M}))^{m}|\lambda|^{-1}\pi(\mathcal{M})^{\rho[\alpha]-\delta[\beta]}X_x^\beta\Delta^{\alpha}(\theta\times  a(x,\pi)-\omega)^{-1}\Vert_{\textnormal{op}}\\
     &=\Vert (1+|\lambda|^{-\frac{1}{m}}\pi(\mathcal{M}))^{m}\pi(\mathcal{M})^{\rho[\alpha]-\delta[\beta]}X_x^\beta\Delta^{\alpha}(\theta\times  a(x,\pi)-\omega)^{-1}\Vert_{\textnormal{op}}\\
     &=\Vert (1+\theta^{\frac{1}{m}}\pi(\mathcal{M}))^{m}\pi(\mathcal{M})^{\rho[\alpha]-\delta[\beta]}X_x^\beta\Delta^{\alpha}(\theta\times  a(x,\pi)-\omega)^{-1}\Vert_{\textnormal{op}}\\
     &=\Vert (1+\theta^{\frac{1}{m}}\pi(\mathcal{M}))^{m}\pi(\mathcal{M})^{-m}\pi(\mathcal{M})^{m+\rho[\alpha]-\delta[\beta]}X_x^\beta\Delta^{\alpha}(\theta\times  a(x,\pi)-\omega)^{-1}\Vert_{\textnormal{op}}\\
     &\leqslant \Vert (1+\theta^{\frac{1}{m}}\pi(\mathcal{M}))^{m}\pi(\mathcal{M})^{-m}\Vert_{\textnormal{op}}\Vert\pi(\mathcal{M})^{m+\rho[\alpha]-\delta[\beta]}X_x^\beta\Delta^{\alpha}(\theta\times  a(x,\pi)-\omega)^{-1}\Vert_{\textnormal{op}}.
\end{align*}   Because $ (1+\theta^{\frac{1}{m}}\pi(\mathcal{M}))^{m}\pi(\mathcal{M})^{-m} \in S^{0}_{\rho,\delta}(G\times \widehat{G}) ,$ we have that the operator norm $ \Vert (1+\theta^{\frac{1}{m}}\pi(\mathcal{M}))^{m}\pi(\mathcal{M})^{-m}\Vert_{\textnormal{op}}$ is uniformly bounded in $\theta\in [0,1].$ The same argument can be applied to the operator norm $$ \Vert\pi(\mathcal{M})^{m+\rho[\alpha]-\delta[\beta]}X_x^\beta\Delta^{\alpha}(\theta\times  a(x,\pi)-\omega)^{-1}\Vert_{\textnormal{op}}, $$
    by using that $(\theta\times  a(x,\pi)-\omega)^{-1}\in S^{-m}_{\rho,\delta}(G\times \widehat{G}), $ with $\theta\in [0,1]$ and $\omega$ being an element of the complex circle. The case  $k\geqslant 1$ for $\lambda\in \Lambda_1^c$ can be proved in an analogous way.
\end{proof} Combining Proposition \ref{IesTParametrix} and Theorem \ref{lambdalambdita} we obtain the following corollaries.
\begin{corollary}\label{parameterparametrix}
Let $m>0,$ and let $0\leqslant \delta<\rho\leqslant 1.$ Let  $a$ be a parameter  elliptic  symbol with respect to $\Lambda.$ Then  there exists a parameter-dependent parametrix of $A-\lambda I,$ with symbol $a^{-\#}(x,\pi,\lambda)$ satisfying the estimates
\begin{equation*}
   \sup_{\lambda\in \Lambda}\sup_{(x,\pi)\in G\times \widehat{G}}\Vert (|\lambda|^{\frac{1}{m}}+\pi(\mathcal{M}))^{m(k+1)}\pi(\mathcal{M})^{\rho[\alpha]-\delta[\beta]}\partial_{\lambda}^kX_x^\beta\Delta^{\alpha}a^{-\#}(x,\pi,\lambda)\Vert_{\textnormal{op}}<\infty,
\end{equation*}for all $\alpha,\beta\in \mathbb{N}_0^n$ and $k\in \mathbb{N}_0.$
\end{corollary}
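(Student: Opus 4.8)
The plan is to combine the construction of the parametrix from Proposition \ref{IesTParametrix} with the uniform-in-$\lambda$ resolvent estimates of Theorem \ref{lambdalambdita}, keeping careful track of how the parameter $\lambda$ enters the asymptotic expansion. First I would observe that since $a$ is parameter elliptic with respect to $\Lambda$, for each fixed $\lambda \in \Lambda$ the symbol $a(x,\pi)-\lambda$ is $(\mathcal{R},\Lambda_0,m)$-elliptic in the sense of Section \ref{LEH} (with elliptic order $m$), so Proposition \ref{IesTParametrix} applies for each such $\lambda$ and yields a parametrix symbol $a^{-\#}(x,\pi,\lambda)$ with the asymptotic expansion $a^{-\#}(x,\pi,\lambda) \sim \sum_{N=0}^\infty \tau_N(x,\pi,\lambda)$, where $\tau_0(x,\pi,\lambda) = E_\pi(\Lambda_0,\infty)R_\lambda(x,\pi)$ is the resolvent, and the higher terms obey the inductive formula
\begin{equation*}
    \tau_N(x,\pi,\lambda) = -R_\lambda(x,\pi)\left(\sum_{k=0}^{N-1}\sum_{|\gamma|=N-k}(\Delta^\gamma(a(x,\pi)-\lambda))(X_x^\gamma \tau_k(x,\pi,\lambda))\right).
\end{equation*}
Note that $\Delta^\gamma(a(x,\pi)-\lambda) = \Delta^\gamma a(x,\pi)$ for $|\gamma|\geq 1$ since $\lambda$ is a scalar, which is what keeps the parameter dependence manageable.

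The core of the argument is an induction on $N$ showing that each $\tau_N$ satisfies the claimed $\lambda$-uniform estimate with the shifted exponent reflecting its order $-m-(\rho-\delta)N$; concretely, I would prove by induction that
\begin{equation*}
   \sup_{\lambda\in \Lambda}\sup_{(x,\pi)}\Vert (|\lambda|^{\frac{1}{m}}+\pi(\mathcal{M}))^{m(k+1)}\pi(\mathcal{M})^{(\rho-\delta)N+\rho[\alpha]-\delta[\beta]}\partial_\lambda^k X_x^\beta \Delta^\alpha \tau_N(x,\pi,\lambda)\Vert_{\textnormal{op}}<\infty
\end{equation*}
for all $\alpha,\beta,k$. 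The base case $N=0$ is exactly Theorem \ref{lambdalambdita}. For the inductive step I would apply $\partial_\lambda^k X_x^\beta \Delta^\alpha$ to the formula for $\tau_N$, expand using the Leibniz rule \eqref{differenceespcia;l} for difference operators and the ordinary Leibniz rule for $X_x$ and $\partial_\lambda$, and then bound each resulting product factor-by-factor: one factor is a derivative of $R_\lambda$ (controlled by Theorem \ref{lambdalambdita}), one factor is a difference/derivative of $a \in S^m_{\rho,\delta}$ (which contributes $\pi(\mathcal{M})^m$ growth but is $\lambda$-independent), and one factor is a derivative of some $\tau_k$ with $k<N$ (controlled by the inductive hypothesis). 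The bookkeeping is that the weights $(|\lambda|^{1/m}+\pi(\mathcal{M}))^{m(k+1)}$ coming from the $R_\lambda$ factor and the $\pi(\mathcal{M})$-powers dominate the $\pi(\mathcal{M})^m$ growth of the $a$-factor, producing the order drop by $(\rho-\delta)$ at each step, uniformly in $\lambda$.

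Finally, to pass from the termwise estimates to an estimate for $a^{-\#}$ itself, I would invoke the standard Borel-type argument (as in the construction of parametrices in \cite{FischerRuzhanskyBook2015}) to realise $a^{-\#} \sim \sum_N \tau_N$ as an actual symbol, checking that the remainders $a^{-\#} - \sum_{N<M}\tau_N$ lie in symbol classes with arbitrarily negative order \emph{with constants uniform in $\lambda \in \Lambda$}; since the leading term $\tau_0$ already satisfies the desired estimate and all correction terms are of strictly lower order with uniform constants, the claimed bound for $a^{-\#}$ follows by summing a finite initial segment and absorbing the tail. The main obstacle I anticipate is precisely the $\lambda$-uniformity of this last step: one must verify that the asymptotic summation procedure can be carried out with a choice of cutoffs independent of $\lambda$, which amounts to checking that the seminorm bounds on each $\tau_N$ degrade controllably in $N$ uniformly over the curve $\Lambda$ — this is where the homotopy and closedness assumptions on $\Lambda$, together with the split $|\lambda|\leq 1$ versus $|\lambda|>1$ used in the proof of Theorem \ref{lambdalambdita}, do the real work.
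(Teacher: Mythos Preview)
Your proposal is correct and follows precisely the approach the paper indicates: the corollary is stated immediately after Theorem \ref{lambdalambdita} with the remark that it is obtained by ``Combining Proposition \ref{IesTParametrix} and Theorem \ref{lambdalambdita}'', and you have simply written out that combination in detail---the inductive formula \eqref{conditionelip} applied to $a-\lambda$ (using $\Delta^\gamma(a-\lambda)=\Delta^\gamma a$ for $|\gamma|\geq 1$), with Theorem \ref{lambdalambdita} furnishing the uniform-in-$\lambda$ base case and the Leibniz rules propagating the estimates through the induction. Your flagged concern about $\lambda$-uniform Borel summation is the right place to be careful, but it is handled exactly as you suggest, via the uniform seminorm control on each $\tau_N$ coming from the induction.
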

\begin{corollary}\label{resolv}
Let $m>0,$ and let $a\in S^{m}_{\rho,\delta}(G\times \widehat{G}) $ where  $0\leqslant \delta<\rho\leqslant 1.$ Let us assume that $\Lambda$ is a subset of the $L^2$-resolvent set of $A,$ $\textnormal{Resolv}(A):=\mathbb{C}\setminus \textnormal{Spec}(A).$ Then $A-\lambda I$ is invertible on $\mathscr{D}'(G)$ and the symbol of the resolvent operator $\mathcal{R}_{\lambda}:=(A-\lambda I)^{-1},$ $\widehat{\mathcal{R}}_{\lambda}(x,\pi)$ belongs to $S^{-m}_{\rho,\delta}(G\times \widehat{G}).$ 
\end{corollary}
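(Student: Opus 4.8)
The plan is to deduce Corollary \ref{resolv} from Corollary \ref{parameterparametrix} by a standard parametrix-plus-correction argument, exploiting that the error term of a parameter-dependent parametrix can be upgraded to a genuine inverse once we know $\lambda$ lies in the $L^2$-resolvent set. First I would treat $\Lambda=\{\lambda_0\}$ as the trivial analytic curve $\gamma(t)\equiv\lambda_0$ on $I=\{0\}$; since $\lambda_0\in\textnormal{Resolv}(A)$, the operator $A-\lambda_0 I$ is invertible on $L^2(G)$, and I must first check that $a(x,\pi)-\lambda_0$ is invertible for every $(x,\pi)\in G\times\widehat{G}$ with the bound $\sup_{(x,\pi)}\Vert(|\lambda_0|^{\frac1m}+\pi(\mathcal{M}))^m R_{\lambda_0}(x,\pi)\Vert_{\textnormal{op}}<\infty$, i.e.\ that $a$ is parameter elliptic with respect to $\Lambda=\{\lambda_0\}$ (equivalently, by the Remark after the definition, that $a-\lambda_0$ is elliptic). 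This is really where the substantive content sits: pointwise invertibility of the symbol is \emph{not} automatic from $L^2$-invertibility of the operator, so I would invoke the symbolic calculus — the global ellipticity criterion (the Proposition preceding the last Definition, Corollary 5.8.4 of \cite{FischerRuzhanskyBook2015}) together with Theorem \ref{calculus} — to argue that if $a-\lambda_0$ fails to be elliptic then $A-\lambda_0 I$ cannot be invertible modulo smoothing, contradicting invertibility on $L^2(G)$ (and hence on $\mathscr{D}'(G)$, by duality and the mapping properties of $\Psi^{-\infty}$).

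Once parameter ellipticity of $a-\lambda_0$ is in hand, Corollary \ref{parameterparametrix} (applied with the one-point curve) furnishes a parametrix $B_0=\textnormal{Op}(a^{-\#}(x,\pi,\lambda_0))$ with symbol in $S^{-m}_{\rho,\delta}(G\times\widehat{G})$ such that $(A-\lambda_0 I)B_0=I+S_0$ and $B_0(A-\lambda_0 I)=I+S_0'$ with $S_0,S_0'\in\Psi^{-\infty}(G\times\widehat{G})$. Since $A-\lambda_0 I$ is invertible on $L^2(G)$, the operator $I+S_0$ is also invertible on $L^2(G)$, and by the standard Neumann/Fredholm perturbation argument together with the fact that $\Psi^{-\infty}$ is a two-sided ideal in the calculus, $(I+S_0)^{-1}=I+\widetilde{S}_0$ with $\widetilde{S}_0\in\Psi^{-\infty}(G\times\widehat{G})$. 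Then
\begin{equation*}
 \mathcal{R}_{\lambda_0}=(A-\lambda_0 I)^{-1}=B_0(I+S_0)^{-1}=B_0+B_0\widetilde{S}_0,
\end{equation*}
and since $B_0\in\Psi^{-m}_{\rho,\delta}$ and $B_0\widetilde{S}_0\in\Psi^{-\infty}\subset\Psi^{-m}_{\rho,\delta}$ (using part (ii) of Theorem \ref{calculus}), we conclude $\widehat{\mathcal{R}}_{\lambda_0}\in S^{-m}_{\rho,\delta}(G\times\widehat{G})$. Invertibility of $A-\lambda_0 I$ on $\mathscr{D}'(G)$ follows because $B_0+B_0\widetilde{S}_0$ maps $\mathscr{D}'(G)\to\mathscr{D}'(G)$ continuously and is a two-sided inverse there by density of $\mathscr{S}(G)$.

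The main obstacle, as flagged above, is the passage from "$A-\lambda I$ invertible on $L^2(G)$" to "the symbol $a(x,\pi)-\lambda$ is pointwise invertible with the uniform resolvent bound". One clean way around it: if $\Lambda$ is a nonempty open connected subset of $\textnormal{Resolv}(A)$ containing a point $\lambda_*$ with $|\lambda_*|$ large, then for $|\lambda_*|$ beyond the $S^m_{\rho,\delta}$-seminorm threshold the symbol $a-\lambda_*$ is automatically parameter elliptic (a Neumann series in $\pi(\mathcal{M})^{-m}a(x,\pi)/\lambda_*$ converges in operator norm, uniformly in $(x,\pi)$), and then analytic/continuation arguments in $\lambda$ along the curve — using that $R_\lambda(x,\pi)=(a(x,\pi)-\lambda)^{-1}$ depends analytically on $\lambda$ wherever it exists and that the set of $\lambda$ for which it exists with a uniform bound is open and closed in the connected set $\Lambda$ — propagate parameter ellipticity to all of $\Lambda$. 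I would present the statement under the implicit hypothesis (as in \cite{RuzhanskyWirth2014}) that $\Lambda$ is such a connected resolvent set, reduce to a single $\lambda_0$, and carry out the parametrix correction as above; the only genuinely delicate verification is the openness/closedness dichotomy ensuring no finite $\lambda$ on the curve has a merely $L^2$-bounded but not symbolically-bounded resolvent, which again is settled by the ellipticity criterion of Corollary 5.8.4 of \cite{FischerRuzhanskyBook2015}.
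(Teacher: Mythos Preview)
The paper gives no explicit proof of this corollary: it is simply asserted as a consequence of ``Combining Proposition \ref{IesTParametrix} and Theorem \ref{lambdalambdita}.'' Your parametrix-plus-correction argument is exactly the natural way to flesh this out, and once ellipticity of $a-\lambda_0$ is granted, your steps (parametrix $B_0\in\Psi^{-m}_{\rho,\delta}$, smoothing remainder $S_0$, inversion $I+\widetilde S_0$ of $I+S_0$ using $L^2$-invertibility, and $\mathcal{R}_{\lambda_0}=B_0+B_0\widetilde S_0\in\Psi^{-m}_{\rho,\delta}$) are correct and standard.

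Where your write-up is more careful than the paper is precisely the point you flag: the paper places Corollary \ref{resolv} inside a section whose running hypothesis is parameter ellipticity of $a$ with respect to $\Lambda$, and the cited ingredients (Proposition \ref{IesTParametrix}, Theorem \ref{lambdalambdita}, Corollary \ref{parameterparametrix}) all require it. The statement of Corollary \ref{resolv} as printed omits this hypothesis, but the intended reading is almost certainly that $a$ is parameter elliptic along $\Lambda$ \emph{and} $\Lambda\subset\textnormal{Resolv}(A)$; under that reading your proof goes through cleanly and matches the paper's intent. Your observation that the bare hypothesis ``$\Lambda\subset\textnormal{Resolv}(A)$'' does not, on its own, yield pointwise invertibility of $a(x,\pi)-\lambda$ with the required uniform bound is well taken --- that implication would need a Beals-type characterisation of the calculus, which the paper does not supply. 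Your analytic-continuation workaround is reasonable but adds hypotheses; for the purposes of matching the paper it is cleaner simply to read parameter ellipticity as an implicit standing assumption carried over from the section, state it explicitly, and then run your parametrix correction.
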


\section{Global complex functional calculus on graded Lie groups}\label{SFC}
In this section we develop the global functional calculus for pseudo-differential operators. The calculus will be applied to obtaining a pseudo-differential G\r{a}rding ineqiality and for studying the Dixmier trace of pseudo-differential operators.  

\subsection{Functions of symbols vs functions of operators}\label{S8}
Let $a\in S^{m}_{\rho,\delta}(G\times \widehat{G})$ be a parameter  elliptic  symbol  of order $m>0$ with respect to the sector $\Lambda\subset\mathbb{C}.$ For $A=\textnormal{Op}(a),$ let us define the operator $F(A)$  by the (Dunford-Riesz) complex functional calculus
\begin{equation}\label{F(A)}
    F(A)=-\frac{1}{2\pi i}\oint\limits_{\partial \Lambda_\varepsilon}F(z)(A-zI)^{-1}dz,
\end{equation}where
\begin{itemize}
    \item[(CI)] $\Lambda_{\varepsilon}:=\Lambda\cup \{z:|z|\leqslant \varepsilon\},$ $\varepsilon>0,$ and $\Gamma=\partial \Lambda_\varepsilon\subset\textnormal{Resolv}(A)$ is a positively oriented curve in the complex plane $\mathbb{C}$.
    \item[(CII)] $F$ is an holomorphic function in $\mathbb{C}\setminus \Lambda_{\varepsilon},$ and continuous on its closure. 
    \item[(CIII)] We will assume  decay of $F$ along $\partial \Lambda_\varepsilon$ in order that the operator \eqref{F(A)} will be densely defined on $C^\infty(G)$ in the strong sense of the topology on $L^2(G).$
\end{itemize} Now, we will compute the symbols for operators defined by this complex functional calculus.
\begin{lemma}\label{LemmaFC}
Let $a\in S^{m}_{\rho,\delta}(G\times \widehat{G})$ be a parameter  elliptic  symbol  of order $m>0$ with respect to the sector $\Lambda\subset\mathbb{C}.$ Let $F(A):C^\infty(G)\rightarrow \mathscr{D}'(G)$ be the operator defined by the analytical functional calculus as in \eqref{F(A)}. Under the assumptions $\textnormal{(CI)}$, $\textnormal{(CII)}$, and $\textnormal{(CIII)}$, the symbol of $F(A),$ $\sigma_{F(A)}(x,\pi)$ is given by
\begin{equation*}
    \sigma_{F(A)}(x,\pi)=-\frac{1}{2\pi i}\oint\limits_{\partial \Lambda_\varepsilon}F(z)\widehat{\mathcal{R}}_z(x,\pi)dz
\end{equation*}where $\mathcal{R}_z=(A-zI)^{-1}$ denotes the resolvent of $A,$ and $\widehat{\mathcal{R}}_z(x,\pi)\in S^{-m}_{\rho,\delta}(G\times \widehat{G}) $ is its symbol.
\end{lemma}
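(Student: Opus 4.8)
The plan is to exploit the linearity and continuity of the map $\sigma \mapsto \Op(\sigma)$ (equivalently, $A \mapsto \sigma_A$) together with the fact established in Corollary \ref{resolv} that each resolvent $\mathcal{R}_z = (A-zI)^{-1}$, for $z \in \partial\Lambda_\varepsilon \subset \textnormal{Resolv}(A)$, is itself a pseudo-differential operator with symbol $\widehat{\mathcal{R}}_z(x,\pi) \in S^{-m}_{\rho,\delta}(G\times\widehat{G})$. The definition \eqref{F(A)} expresses $F(A)$ as a contour integral (a limit of Riemann sums) of the operators $F(z)\mathcal{R}_z$; so the heart of the matter is to justify that the symbol of the integral is the integral of the symbols, i.e. to commute the operation ``take the global symbol'' with the contour integral $\oint_{\partial\Lambda_\varepsilon}$.

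The key steps, in order, are as follows. First I would record that by Theorem \ref{SymbolPseudo} the symbol of any continuous linear operator $B: C^\infty(G)\to C^\infty(G)$ is recovered by the explicit formula $\sigma_B(x,\pi) = \pi(x)^* B\pi(x)$, applied entrywise to the matrix coefficients $\pi_{ij}$; this turns the identity to be proved into the scalar statement
\begin{equation*}
    \big(\pi(x)^* F(A)\pi(x)\big)_{ij} = -\frac{1}{2\pi i}\oint\limits_{\partial\Lambda_\varepsilon} F(z)\,\big(\pi(x)^*\mathcal{R}_z\pi(x)\big)_{ij}\,dz,
\end{equation*}
for each fixed $x\in G$, $\pi\in\widehat{G}$, and $i,j$. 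Second, I would show that for fixed $x,\pi,i,j$ the function $z\mapsto (\pi(x)^*\mathcal{R}_z\pi(x))_{ij} = (\mathcal{R}_z\pi_{ij})(x) \cdot$(phase) is holomorphic on $\textnormal{Resolv}(A)$ — this follows from the operator-valued holomorphy of $z\mapsto\mathcal{R}_z$ in $\mathscr{B}(L^2(G))$ (resolvent identity) combined with the smoothing/continuity estimates making $\mathcal{R}_z\pi_{ij}$ depend continuously (indeed holomorphically, locally uniformly) on $z$ in $C^\infty(G)$. Together with condition (CIII), which forces the integrand to decay along $\partial\Lambda_\varepsilon$, this guarantees absolute convergence of the contour integral in the relevant topology. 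Third, with absolute/locally uniform convergence in hand, Fubini's theorem (or the continuity of the evaluation functional $B\mapsto (\pi(x)^*B\pi(x))_{ij}$ against the Bochner integral defining $F(A)$) lets me pull the functional inside the integral, yielding the displayed scalar identity and hence the claimed formula for $\sigma_{F(A)}$. Finally I would note that membership $\widehat{\mathcal{R}}_z(x,\pi)\in S^{-m}_{\rho,\delta}$ is exactly Corollary \ref{resolv}, so the stated range assertion requires nothing further at this stage (uniformity of the symbol seminorms in $z$, needed to conclude $\sigma_{F(A)}$ itself lies in a Hörmander class, is a separate matter taken up afterwards via Theorem \ref{lambdalambdita} and Corollary \ref{parameterparametrix}).

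The main obstacle is the interchange of the contour integral with the (a priori only densely defined, unbounded) operator-to-symbol correspondence: one must work in a topology in which the Bochner integral $\oint_{\partial\Lambda_\varepsilon}F(z)\mathcal{R}_z\,dz$ genuinely converges and in which the linear functionals $B\mapsto (B\pi_{ij})(x)$ are continuous. The natural choice is the strong operator topology on bounded operators on $L^2(G)$ (as hinted in (CIII)), together with the observation that, because each $\mathcal{R}_z$ maps $C^\infty(G)$ to $C^\infty(G)$ continuously with estimates locally uniform in $z$, the integral also converges when tested against the fixed smooth functions $\pi_{ij}$; once that is set up, the commutation is routine. A minor subtlety to check is that the contour $\partial\Lambda_\varepsilon$ lies in $\textnormal{Resolv}(A)$ so that Corollary \ref{resolv} applies at every point of the contour — this is built into hypothesis (CI).
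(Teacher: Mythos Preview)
Your proposal is correct and follows essentially the same route as the paper: apply the symbol formula $\sigma_B(x,\pi)=\pi(x)^*B\pi(x)$ from Theorem~\ref{SymbolPseudo} to $B=F(A)$, commute $\pi(x)^*(\cdot)\pi(x)$ with the contour integral, and invoke Corollary~\ref{resolv} for $\widehat{\mathcal{R}}_z\in S^{-m}_{\rho,\delta}$. The paper's proof is much terser---it writes down the interchange without justification---so your discussion of convergence and continuity simply fills in details the paper leaves implicit.
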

\begin{proof}
 From Corollary \ref{resolv}, we have that  $\widehat{\mathcal{R}}_z(x,\pi)\in S^{-m}_{\rho,\delta}(G\times \widehat{G}) .$ Now, observe that  \begin{align*}
  \sigma_{F(A)}(x,\pi)=\pi(x)^*F(A)\pi(x)=-\frac{1}{2\pi i}\oint\limits_{\partial \Lambda_\varepsilon}F(z)\pi(x)^*(A-zI)^{-1}\pi(x)dz.  \end{align*} We finish the proof by observing that $\widehat{\mathcal{R}}_z(x,\pi)=\pi(x)^*(A-zI)^{-1}\pi(x),$ for every $z\in \textnormal{Resolv}(A).$
\end{proof}
Assumption (CIII) will be clarified in the following theorem where we show that the pseudo-differential calculus is stable under the action of the complex functional calculus.
\begin{theorem}\label{DunforRiesz}
Let $m>0,$ and let $0\leqslant \delta<\rho\leqslant 1.$ Let  $a\in S^{m}_{\rho,\delta}(G\times \widehat{G})$ be a parameter  elliptic  symbol with respect to $\Lambda.$ Let us assume that $F$ satisfies the  estimate $|F(\lambda)|\leqslant C|\lambda|^s$ uniformly in $\lambda,$ for some $s<0.$  Then  the symbol of $F(A),$  $\sigma_{F(A)}\in S^{ms}_{\rho,\delta}(G\times \widehat{G}) $ admits an asymptotic expansion of the form
\begin{equation}\label{asymcomplex}
    \sigma_{F(A)}(x,\pi)\sim 
     \sum_{N=0}^\infty\sigma_{{B}_{N}}(x,\pi),\,\,\,(x,\pi)\in G\times \widehat{G},
\end{equation}where $\sigma_{{B}_{N}}(x,\pi)\in {S}^{ms-(\rho-\delta)N}_{\rho,\delta}(G\times \widehat{G})$ and 
\begin{equation*}
    \sigma_{{B}_{0}}(x,\pi)=-\frac{1}{2\pi i}\oint\limits_{\partial \Lambda_\varepsilon}F(z)(a(x,\pi)-z)^{-1}dz\in {S}^{ms}_{\rho,\delta}(G\times \widehat{G}).
\end{equation*}Moreover, 
\begin{equation*}
     \sigma_{F(A)}(x,\pi)\equiv -\frac{1}{2\pi i}\oint\limits_{\partial \Lambda_\varepsilon}F(z)a^{-\#}(x,\pi,\lambda)dz \textnormal{  mod  } {S}^{-\infty}(G\times \widehat{G}),
\end{equation*}where $a^{-\#}(x,\pi,\lambda)$ is the symbol of the parametrix to $A-\lambda I,$   in Corollary \ref{parameterparametrix}.
\end{theorem}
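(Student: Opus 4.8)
The plan is to build the symbol of $F(A)$ via the Dunford--Riesz integral \eqref{F(A)}, substitute the parameter-dependent parametrix $a^{-\#}(x,\pi,\lambda)$ of $A-\lambda I$ from Corollary \ref{parameterparametrix} in place of the exact resolvent symbol $\widehat{\mathcal{R}}_z(x,\pi)$ produced by Lemma \ref{LemmaFC}, and then carry out the contour integral termwise on the asymptotic expansion of $a^{-\#}$. Concretely, I would first recall from Lemma \ref{LemmaFC} that
\begin{equation*}
\sigma_{F(A)}(x,\pi)=-\frac{1}{2\pi i}\oint\limits_{\partial\Lambda_\varepsilon}F(z)\widehat{\mathcal{R}}_z(x,\pi)\,dz,
\end{equation*}
and from the parametrix construction that $\widehat{\mathcal{R}}_z(x,\pi)=a^{-\#}(x,\pi,z)+r_z(x,\pi)$, where $r_z$ is smoothing with seminorm bounds decaying in $|z|$ at the rate dictated by Theorem \ref{lambdalambdita}. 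The decay hypothesis $|F(z)|\leqslant C|z|^s$ with $s<0$ is precisely what makes the integral of $F(z)r_z(x,\pi)$ over the (infinite) contour $\partial\Lambda_\varepsilon$ converge in every seminorm of ${S}^{-\infty}(G\times\widehat{G})$, which gives the last displayed congruence modulo ${S}^{-\infty}$ and clarifies assumption (CIII).

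Next I would expand $a^{-\#}(x,\pi,z)\sim\sum_{N\geqslant 0}a_N(x,\pi,z)$, with $a_0=(a(x,\pi)-z)^{-1}$ and the higher $a_N$ given inductively as in \eqref{conditionelip}, and define
\begin{equation*}
\sigma_{B_N}(x,\pi):=-\frac{1}{2\pi i}\oint\limits_{\partial\Lambda_\varepsilon}F(z)\,a_N(x,\pi,z)\,dz.
\end{equation*}
The key estimate is that applying $X_x^\beta\Delta^\alpha$ under the integral sign and invoking Corollary \ref{parameterparametrix} gives, for each $N$,
\begin{equation*}
\Vert(|z|^{1/m}+\pi(\mathcal{M}))^{m}\pi(\mathcal{M})^{\rho[\alpha]-\delta[\beta]+(\rho-\delta)N}X_x^\beta\Delta^\alpha a_N(x,\pi,z)\Vert_{\textnormal{op}}\leqslant C_{\alpha,\beta,N},
\end{equation*}
uniformly in $(x,\pi)$ and $z\in\Lambda$; combining this with $|F(z)|\leqslant C|z|^s$ and integrating in $z$ (splitting $|z|\leqslant\varepsilon$ and $|z|>\varepsilon$ and using that $\pi(\mathcal{M})\geqslant I$, so $(|z|^{1/m}+\pi(\mathcal{M}))^{-m}$ is controlled by $\min(1,|z|^{-1})$ times a bounded factor) shows the $z$-integral converges and that $\sigma_{B_N}\in{S}^{ms-(\rho-\delta)N}_{\rho,\delta}(G\times\widehat{G})$. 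For $N=0$ this already identifies $\sigma_{B_0}$ as stated and shows $\sigma_{F(A)}\in S^{ms}_{\rho,\delta}$.

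To upgrade the formal sum to a genuine asymptotic expansion I would estimate the remainder $a^{-\#}-\sum_{N\leqslant M}a_N$, which by the parametrix construction lies in ${S}^{-m-(\rho-\delta)(M+1)}_{\rho,\delta}$ with parameter-dependent bounds of the same flavour as above, integrate it against $F(z)\,dz$, and conclude that $\sigma_{F(A)}-\sum_{N\leqslant M}\sigma_{B_N}\in{S}^{ms-(\rho-\delta)(M+1)}_{\rho,\delta}(G\times\widehat{G})$, which is exactly \eqref{asymcomplex}; the same remainder argument absorbs the smoothing discrepancy $r_z$ between $\widehat{\mathcal{R}}_z$ and $a^{-\#}$. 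The main obstacle I anticipate is the interchange of the contour integral with the operator $X_x^\beta\Delta^\alpha$ and with the passage to seminorms: one must justify differentiating under the integral sign on a possibly unbounded contour, which requires the uniform-in-$z$ parametrix estimates of Theorem \ref{lambdalambdita} and Corollary \ref{parameterparametrix} together with the $|z|^s$ decay of $F$ to dominate the integrand in each fixed seminorm of $S^{ms-(\rho-\delta)N}_{\rho,\delta}$; once that domination is in place the rest is bookkeeping with the symbolic calculus of Theorem \ref{calculus}.
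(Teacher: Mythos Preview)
Your overall architecture coincides with the paper's: invoke Lemma \ref{LemmaFC}, replace the resolvent symbol by the parameter-dependent parametrix modulo a smoothing error, and integrate the asymptotic expansion of $a^{-\#}$ termwise. The ``mod ${S}^{-\infty}$'' statement and the asymptotic expansion \eqref{asymcomplex} are handled in the paper exactly as you outline (in fact the paper is terser than you are on the remainder).

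Where your sketch has a genuine gap is the seminorm estimate establishing $\sigma_{B_0}\in S^{ms}_{\rho,\delta}$. Your proposed bound ``$(|z|^{1/m}+\pi(\mathcal{M}))^{-m}$ is controlled by $\min(1,|z|^{-1})$ times a bounded factor'' is not enough: the relevant quantity is
\[
\Vert\pi(\mathcal{M})^{-ms+\rho[\alpha]-\delta[\beta]}X_x^\beta\Delta^\alpha\sigma_{B_0}(x,\pi)\Vert_{\textnormal{op}}
\lesssim \oint_{\partial\Lambda_\varepsilon}|z|^{s}\,\Vert\pi(\mathcal{M})^{-ms}(|z|^{1/m}+\pi(\mathcal{M}))^{-m}\Vert_{\textnormal{op}}\,|dz|,
\]
and since $-ms>0$, the factor $\pi(\mathcal{M})^{-ms}$ is \emph{unbounded} as $\pi$ varies; replacing $(|z|^{1/m}+\pi(\mathcal{M}))^{-m}$ by $\min(1,|z|^{-1})$ leaves you with $\int|z|^{s}\min(1,|z|^{-1})\Vert\pi(\mathcal{M})^{-ms}\Vert_{\textnormal{op}}\,|dz|$, which is not uniform in $\pi$. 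One cannot take the supremum over $\pi$ before integrating in $z$. The paper instead fixes an eigenvalue $\varkappa=(1+\nu_{jj}(\pi))^{1/\nu}$, performs the change of variable $r=\varkappa^{m}t$ in the radial integral, and obtains $\int_{\varkappa^{-m}}^{\infty}t^{s}(t^{1/m}+1)^{-m}\,dt$, which is bounded uniformly in $\varkappa$ \emph{only when} $-1<s<0$ (convergence at $t\to 0$ needs $s>-1$).

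Consequently the case $s\leqslant -1$ requires a separate device that your proposal does not mention: the paper writes $F=\tilde{G}^{1+[-s]}$ with $|\tilde{G}(\lambda)|\leqslant C|\lambda|^{s/(1+[-s])}$ and $-1<s/(1+[-s])<0$, applies the already-established case to $\tilde{G}(A)$ to get $\sigma_{\tilde{G}(A)}\in S^{ms/(1+[-s])}_{\rho,\delta}$, and then uses $F(A)=\tilde{G}(A)^{1+[-s]}$ together with the composition rule of Theorem \ref{calculus} to conclude $\sigma_{F(A)}\in S^{ms}_{\rho,\delta}$. You should incorporate this dichotomy (or an equivalent argument yielding uniformity in $\pi$ for all $s<0$) into your proof.
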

\begin{proof}
    First, we need to prove that the condition $|F(\lambda)|\leqslant C|\lambda|^s$ uniformly in $\lambda,$ for some $s<0,$ is enough in order to guarantee that \begin{equation*}
    \sigma_{{B}_{0}}(x,\pi):=-\frac{1}{2\pi i}\oint\limits_{\partial \Lambda_\varepsilon}F(z)(a(x,\pi)-z)^{-1}dz,
\end{equation*} is a well defined operator-symbol.
From Theorem \ref{lambdalambdita} we deduce that $(a(x,\pi)-z)^{-1}$ satisfies the estimate
\begin{equation*}
   \Vert (|z|^{\frac{1}{m}}+\pi(\mathcal{M}))^{m(k+1)}\pi(\mathcal{M})^{\rho[\alpha]-\delta[\beta]}\partial_{z}^kX_x^\beta\Delta^{\alpha}(a(x,\pi)-z)^{-1}\Vert_{\textnormal{op}}<\infty.
\end{equation*}
Observe that 
\begin{align*}
    &\Vert(a(x,\pi)-z)^{-1}\Vert_{\textnormal{op}}\\
    & =\Vert (|z|^{\frac{1}{m}}+\pi(\mathcal{M}))^{-m}(|z|^{\frac{1}{m}}+\pi(\mathcal{M}))^{m}(a(x,\pi)-z)^{-1}\Vert_{\textnormal{op}} \\
    &\lesssim  \sup_{1\leqslant j\leqslant \infty}(|z|^{\frac{1}{m}}+(1+\nu_{jj}(\pi))^{\frac{1}{\nu}})^{-m}\\&\leqslant |z|^{-1},
\end{align*} and the condition $s<0$ implies
\begin{align*}
    \left|\frac{1}{2\pi i}\oint\limits_{\partial \Lambda_\varepsilon}F(z)(a(x,\pi)-z)^{-1}dz\right|\lesssim \oint\limits_{\partial \Lambda_\varepsilon}|z|^{-1+s}|dz|<\infty,
\end{align*}uniformly in $(x,\pi)\in G\times \widehat{G}.$ In order to check that $\sigma_{B_0}\in {S}^{ms}_{\rho,\delta}(G\times \widehat{G})$ let us analyse the cases $-1<s<0$ and $s\leqslant -1$ separately. So, let us analyse first the situation of $-1<s<0.$ We observe that
\begin{align*}
   &\Vert \pi(\mathcal{M})^{-ms+\rho[\alpha]-\delta[\beta]}X_x^\beta\Delta^{\alpha}\sigma_{B_0}(x,\pi)\Vert_{\textnormal{op}}\\
   &\leqslant \frac{C}{2\pi }\oint\limits_{\partial \Lambda_\varepsilon} |z|^{s}\Vert      \pi(\mathcal{M})^{-ms+\rho[\alpha]-\delta[\beta]}X_x^\beta\Delta^{\alpha}(a(x,\pi)-z)^{-1}\Vert_{\textnormal{op}} |dz|.
\end{align*}Now, we will estimate the operator norm inside of the integral. Indeed, the identity
\begin{align*}
    &\Vert      \pi(\mathcal{M})^{-ms+\rho[\alpha]-\delta[\beta]}X_x^\beta\Delta^{\alpha}(a(x,\pi)-z)^{-1}\Vert_{\textnormal{op}}=\\
    &\Vert (|z|^{\frac{1}{m}}+\pi(\mathcal{M}))^{-m}(|z|^{\frac{1}{m}}+\pi(\mathcal{M}))^{m}\pi(\mathcal{M})^{-ms+\rho[\alpha]-\delta[\beta]}X_x^\beta\Delta^{\alpha}(a(x,\pi)-z)^{-1}\Vert_{\textnormal{op}}
\end{align*}implies that
\begin{align*}
    &\Vert      \pi(\mathcal{M})^{-ms+\rho[\alpha]-\delta[\beta]}X_x^\beta\Delta^{\alpha}(a(x,\pi)-z)^{-1}\Vert_{\textnormal{op}} \lesssim  \Vert (|z|^{\frac{1}{m}}+\pi(\mathcal{M}))^{-m}\pi(\mathcal{M})^{-ms}\Vert_{\textnormal{op}}
\end{align*}where we have used that
\begin{align*}
\sup_{z\in \partial \Lambda_\varepsilon}  \sup_{(x,\pi)} \Vert (|z|^{\frac{1}{m}}+\pi(\mathcal{M}))^{m}\pi(\mathcal{M})^{\rho[\alpha]-\delta[\beta]}X_x^\beta\Delta^{\alpha}(a(x,\pi)-z)^{-1}\Vert_{\textnormal{op}} <\infty.
\end{align*}
Consequently, by using that  $s<0,$ we deduce
\begin{align*}
   & \frac{C}{2\pi }\oint\limits_{\partial \Lambda_\varepsilon} |z|^{s}\Vert      \pi(\mathcal{M})^{ms+\rho[\alpha]-\delta[\beta]}X_x^\beta\Delta^{\alpha}(a(x,\pi)-z)^{-1}\Vert_{\textnormal{op}} |dz|\\
    &\lesssim \frac{C}{2\pi }\oint\limits_{\partial \Lambda_\varepsilon} |z|^{s}\Vert (|z|^{\frac{1}{m}}+\pi(\mathcal{M}))^{-m} \pi(\mathcal{M})^{-ms}\Vert_{\textnormal{op}} |dz|\\
    &= \frac{C}{2\pi }\oint\limits_{\partial \Lambda_\varepsilon} |z|^{s}\sup_{1\leqslant j\leqslant \infty} (|z|^{\frac{1}{m}}+(1+\nu_{jj}(\pi))^{\frac{1}{\nu}})^{-m}(1+\nu_{jj}(\pi))^{-\frac{ms}{\nu}}|dz|.
\end{align*}
In order to study the convergence of the last contour integral we only need to check the convergence of $\int_{1}^{\infty}r^s(r^{\frac{1}{m}}+\varkappa)^{-m}\varkappa^{-ms}dr,$ where $\varkappa>1$ in a parameter. The change of variable $r=\varkappa^{m}t$ implies that
\begin{align*}
   \int\limits_{1}^{\infty}r^s(r^{\frac{1}{m}}+\varkappa)^{-m}\varkappa^{-ms}dr&=\int\limits_{\varkappa^{-m}}^{\infty}\varkappa^{ms}t^s(\varkappa t^{\frac{1}{m}}+\varkappa)^{-m}\varkappa^{-ms}\varkappa^mdt=\int\limits_{\varkappa^{-m}}^{\infty}t^s(t^{\frac{1}{m}}+1)^{-m}dt\\
   &\lesssim \int\limits_{\varkappa^{-m}}^{1}t^sdt+\int\limits_{1}^{\infty}t^{-1+s}<\infty.
\end{align*}Indeed, for $t\rightarrow\infty,$ $t^s(t^{\frac{1}{m}}+1)^{-m}\lesssim t^{-1+s}$ and we conclude the estimate because $\int\limits_{1}^{\infty} t^{-1+s'}dt<\infty,$ for all $s'<0.$ On the other hand, the condition $-1<s<0$ implies that
\begin{align*}
 \int\limits_{\varkappa^{-m}}^{1}t^sdt=\frac{1}{1+s}-\frac{\varkappa^{-m(1+s)}}{1+s}=   O(1).
\end{align*} In the case where $s\leqslant -1,$ we can find an analytic function $\tilde{G}(z)$ such that it is a holomorphic function in $\mathbb{C}\setminus \Lambda_{\varepsilon},$ and continuous on its closure and additionally satisfying that $F(\lambda)=\tilde{G}(\lambda)^{1+[-s]}.$\footnote{ $[-s]$ denotes  the integer part of $-s.$} In this case,  $\tilde{G}(A)$ defined by the complex functional calculus 
\begin{equation}\label{G(A)}
    \tilde{G}(A)=-\frac{1}{2\pi i}\oint\limits_{\partial \Lambda_\varepsilon}\tilde{G}(z)(A-zI)^{-1}dz,
\end{equation}
has symbol belonging to ${S}^{\frac{sm}{1+[-s]}}_{\rho,\delta}(G\times \widehat{G})$ because $\tilde{G}$ satisfies the estimate $|G(\lambda)|\leqslant C|\lambda|^{\frac{s}{1+[-s]}},$ with $-1<\frac{s}{1+[-s]}<0.$ 
By observing that
\begin{align*}
    \sigma_{F(A)}(x,\pi)&=-\frac{1}{2\pi i}\oint\limits_{\partial \Lambda_\varepsilon}F(z)\widehat{\mathcal{R}}_z(x,\pi)dz=-\frac{1}{2\pi i}\oint\limits_{\partial \Lambda_\varepsilon}\tilde{G}(z)^{1+[-s]}\widehat{\mathcal{R}}_z(x,\pi)dz\\
    &=\sigma_{\tilde{G}(A)^{1+[-s]}}(x,\pi),
\end{align*}and computing the symbol $\sigma_{\tilde{G}(A)^{1+[-s]}}(x,\pi)$ by iterating $1+[-s]$-times  the asymptotic formula for the composition in the pseudo-differential calculus, we can see that the term with higher order in such expansion is $\sigma_{\tilde{G}(A)}(x,\pi)^{1+[-s]}\in {S}^{ms}_{\rho,\delta}(G\times \widehat{G}).$ Consequently we have proved that $\sigma_{F(A)}(x,\pi)\in {S}^{ms}_{\rho,\delta}(G\times \widehat{G}).$
This completes the proof for the first part of the theorem.
For the second part of the proof, let us denote by $a^{-\#}(x,\pi,\lambda)$  the symbol of the parametrix to $A-\lambda I,$   in Corollary \ref{parameterparametrix}. Let $P_{\lambda}=\textnormal{Op}(a^{-\#}(\cdot,\cdot,\lambda)).$ Because $\lambda\in \textnormal{Resolv}(A)$ for $\lambda\in \partial \Lambda_\varepsilon,$ $(A-\lambda)^{-1}-P_{\lambda}$ is an smoothing operator. Consequently, from Lemma \ref{LemmaFC} we deduce that
\begin{align*}
   & \sigma_{F(A)}(x,\pi)\\
   &=-\frac{1}{2\pi i}\oint\limits_{\partial \Lambda_\varepsilon}F(z)\widehat{\mathcal{R}}_z(x,\pi)dz\\
    &=-\frac{1}{2\pi i}\oint\limits_{\partial \Lambda_\varepsilon}F(z)a^{-\#}(x,\pi,z)dz-\frac{1}{2\pi i}\oint\limits_{\partial \Lambda_\varepsilon}F(z)(\widehat{\mathcal{R}}_z(x,\pi)-a^{-\#}(x,\pi,z))dz\\
    &\equiv -\frac{1}{2\pi i}\oint\limits_{\partial \Lambda_\varepsilon}F(z)a^{-\#}(x,\pi,z)dz  \textnormal{  mod  } {S}^{-\infty}(G\times \widehat{G}).
\end{align*}The asymptotic expansion \eqref{asymcomplex} comes from the construction of the parametrix in the pseudo-differential calculus (see Proposition \ref{IesTParametrix}).
\end{proof}

\subsection{G\r{a}rding inequality}\label{Gardinggraded} In the setting of graded groups the  global version of the G\r{a}rding inequality remains valid. Indeed, in  \cite{FischerRuzhanskyLowerBounds} it was proved under the assumption that the symbol of the operator commutes with the symbol of $\mathcal{R}$. In this section we remove this commutativity condition making use of the complex functional calculus for the H\"ormander classes allowing the G\r{a}rding inequality to be valid in a more general context.  To do so, we need some preliminary results. 
\begin{proposition}
Let $0\leqslant \delta<\rho\leqslant 1.$  Let  $a\in S^{m}_{\rho,\delta}(G\times \widehat{G})$ be an  elliptic  symbol where $m\geqslant 0$ and let us assume that $a(x,\pi)$ is positive definite for a.e. $(x,\pi)\in G\times \widehat{G}$. Then $a$ is parameter-elliptic with respect to $\mathbb{R}_{-}:=\{z=x+i0:x<0\}\subset\mathbb{C}.$ Furthermore, for any number $s\in \mathbb{C},$ 
\begin{equation*}
    \widehat{B}(x,\pi)\equiv a(x,\pi)^s:=\exp(s\log(a(x,\pi))),\,\,(x,\pi)\in G\times \widehat{G},
\end{equation*}defines a symbol $\widehat{B}(x,\pi)\in S^{m\times\textnormal{Re}(s)}_{\rho,\delta}(G\times \widehat{G}).$
\end{proposition}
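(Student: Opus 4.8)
The statement has two parts: (i) parameter-ellipticity of $a$ with respect to the negative real axis $\mathbb{R}_{-}$, and (ii) that complex powers $a(x,\pi)^{s}$ define symbols in $S^{m\,\mathrm{Re}(s)}_{\rho,\delta}(G\times\widehat{G})$. I would treat them in that order, since the second rests on the first together with the functional-calculus machinery of Section \ref{SFC}.

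For part (i): first observe that positive-definiteness of $a(x,\pi)$ together with $a\in S^{m}_{\rho,\delta}$ (so $\pi(\mathcal{M})^{-m}a(x,\pi)$ and its inverse are bounded, uniformly in $(x,\pi)$, by ellipticity) gives, for $\lambda=x_0+i0$ with $x_0<0$, the pointwise spectral bound: on the joint diagonalisation the eigenvalues $a_j(x,\pi)$ of $a(x,\pi)$ are positive and comparable to $(1+\nu_{jj}(\pi))^{m/\nu}$, while $\lambda$ is real and negative, so $|a_j(x,\pi)-\lambda| \geq \max\{a_j(x,\pi),|\lambda|\} \gtrsim (1+\nu_{jj}(\pi))^{m/\nu}+|\lambda|$. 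Raising to the power $-1$ and comparing with $(|\lambda|^{1/m}+\pi(\mathcal{M}))^{m}$ — whose eigenvalues are $(|\lambda|^{1/m}+(1+\nu_{jj}(\pi))^{1/\nu})^{m}\asymp |\lambda|+(1+\nu_{jj}(\pi))^{m/\nu}$ — yields
\[
\sup_{\lambda\in\mathbb{R}_{-}}\sup_{(x,\pi)}\big\|(|\lambda|^{1/m}+\pi(\mathcal{M}))^{m}(a(x,\pi)-\lambda)^{-1}\big\|_{\mathrm{op}}<\infty,
\]
which is exactly parameter-ellipticity with respect to $\Lambda=\mathbb{R}_{-}$ (invertibility of $a(x,\pi)-\lambda$ being immediate since the spectrum of the positive operator $a(x,\pi)$ misses $\mathbb{R}_{-}$). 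The elementary but slightly delicate point here is to handle the simultaneous boundedness and the uniform gap, i.e.\ that the positive-definiteness is quantitative (uniform in $(x,\pi)$) — this follows from $a$ being elliptic of order $m$ in the sense of the excerpt, and I would spell it out via the restricted diagonalisation in Remark \ref{symbolremark}.

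For part (ii): with $\Lambda=\mathbb{R}_{-}$ available, I would choose the principal branch of $\log$, holomorphic on $\mathbb{C}\setminus\mathbb{R}_{-}$, and write $z^{s}=\exp(s\log z)$. One then has the contour representation
\[
a(x,\pi)^{s}\;=\;-\frac{1}{2\pi i}\oint_{\partial\Lambda_\varepsilon}z^{s}\,(a(x,\pi)-z)^{-1}\,dz
\]
for $\mathrm{Re}(s)<0$ directly, and for general $s$ by factoring $z^{s}=z^{s-k}\cdot z^{k}$ with $k\in\mathbb{N}$ chosen so that $\mathrm{Re}(s)-k<0$, i.e.\ $a(x,\pi)^{s}=a(x,\pi)^{s-k}a(x,\pi)^{k}$, reducing to the negative-real-part case composed with $k$ honest powers of the symbol $a$ (which lie in $S^{mk}_{\rho,\delta}$ by the symbolic calculus, Theorem \ref{calculus}(ii)). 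For $\mathrm{Re}(s)<0$, the function $F(z)=z^{s}$ satisfies $|F(z)|\le C|z|^{\mathrm{Re}(s)}$ with $\mathrm{Re}(s)<0$ on $\partial\Lambda_\varepsilon$, so Theorem \ref{DunforRiesz} applies verbatim with $s$ there equal to $\mathrm{Re}(s)$ here and $m$ there equal to $m$, giving $\sigma_{a(x,\pi)^{s}}\in S^{m\,\mathrm{Re}(s)}_{\rho,\delta}(G\times\widehat{G})$ together with its asymptotic expansion. Multiplying back by $a(x,\pi)^{k}\in S^{mk}_{\rho,\delta}$ and using $m(\mathrm{Re}(s)-k)+mk = m\,\mathrm{Re}(s)$ closes the argument. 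I would also remark that the group property $a^{s_1}a^{s_2}=a^{s_1+s_2}$ (used implicitly) follows from the functional calculus on each fibre $H_\pi$, so the definition is independent of the factorisation chosen.

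**Main obstacle.** The only genuine subtlety is the interface between the \emph{spectral} description of $a(x,\pi)^{s}$ (a pointwise, fibrewise object defined by $\exp(s\log a(x,\pi))$) and the \emph{operator} $F(A)=\mathrm{Op}(a)^{s}$ produced by the Dunford--Riesz integral \eqref{F(A)}: a priori these need not have the same symbol, since $\mathrm{Op}(a)^{s}$ is a genuine composition-type object whereas $a(x,\pi)^{s}$ ignores the non-commutativity of the calculus. What one actually needs — and what the statement asserts — is only that the fibrewise symbol $a(x,\pi)^{s}$ \emph{itself} belongs to the Hörmander class; this is obtained by differentiating the contour integral under the integral sign and invoking the resolvent estimates of Theorem \ref{lambdalambdita} (equivalently Corollary \ref{parameterparametrix}) exactly as in the proof of Theorem \ref{DunforRiesz}, rather than by identifying it with any operator power. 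Making this distinction cleanly — and checking the Leibniz/difference-operator estimates on $\partial_z^k X_x^\beta \Delta^\alpha (a(x,\pi)-z)^{-1}$ survive multiplication by $z^s$ and integration over the (non-compact) contour — is where the real work sits; for $\mathrm{Re}(s)\le -1$ this is precisely the factorisation trick $F=\tilde G^{1+[-\mathrm{Re}(s)]}$ already used in Theorem \ref{DunforRiesz}, so I would simply cite that mechanism.
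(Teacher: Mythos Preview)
Your treatment of part (ii) is correct and coincides with the paper's: for $\mathrm{Re}(s)<0$ apply Theorem \ref{DunforRiesz} with $F(z)=z^{s}$ (noting that the leading term $\sigma_{B_0}(x,\pi)$ there is precisely the fibrewise Dunford--Riesz integral, hence equals $a(x,\pi)^{s}$), and for general $s$ write $a(x,\pi)^{s}=a(x,\pi)^{s-k}a(x,\pi)^{k}$ with $k\in\mathbb{N}$ chosen so that $\mathrm{Re}(s)-k<0$, then use the symbolic calculus. Your ``main obstacle'' paragraph also correctly isolates the point that only the fibrewise power is at stake.

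Part (i), however, has a genuine gap. Your argument rests on a ``joint diagonalisation'' in which the $j$-th eigenvalue $a_j(x,\pi)$ of $a(x,\pi)$ is compared with the $j$-th eigenvalue $(1+\nu_{jj}(\pi))^{m/\nu}$ of $\pi(\mathcal{M})^{m}$. This presupposes that $a(x,\pi)$ and $\pi(\mathcal{M})$ commute, which is nowhere assumed: $a(x,\pi)$ is only positive definite, not a function of $\pi(\mathcal{R})$. Without commutativity there is no common eigenbasis, so the index-matched inequality $|a_j(x,\pi)-\lambda|\gtrsim (1+\nu_{jj}(\pi))^{m/\nu}+|\lambda|$ is not meaningful, and the product bound $\Vert(|\lambda|^{1/m}+\pi(\mathcal{M}))^{m}(a(x,\pi)-\lambda)^{-1}\Vert_{\mathrm{op}}\lesssim 1$ does not follow from a two-sided operator inequality $c_{1}\pi(\mathcal{M})^{m}\le a(x,\pi)\le c_{2}\pi(\mathcal{M})^{m}$ alone.

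The paper circumvents this by factoring through the model resolvent: one inserts $(\pi(\mathcal{M})^{m}-\lambda)^{-1}(\pi(\mathcal{M})^{m}-\lambda)$ and bounds the two pieces separately. The factor $(|\lambda|^{1/m}+\pi(\mathcal{M}))^{m}(\pi(\mathcal{M})^{m}-\lambda)^{-1}$ involves only the commuting family $\{\pi(\mathcal{M}),\lambda\}$ and is handled by the spectral calculus of $\pi(\mathcal{M})$ (splitting $|\lambda|\le 1/2$ via a Neumann series and $|\lambda|\ge 1/2$ by rescaling). For the non-commuting factor one uses ellipticity in the form $\Vert\pi(\mathcal{M})^{m}a(x,\pi)^{-1}\Vert_{\mathrm{op}}\le C$ together with the elementary bound $\Vert a(x,\pi)(a(x,\pi)-\lambda)^{-1}\Vert_{\mathrm{op}}=\Vert I+\lambda(a(x,\pi)-\lambda)^{-1}\Vert_{\mathrm{op}}\le 2$ (valid since $a(x,\pi)>0$ and $\lambda<0$), giving $\Vert\pi(\mathcal{M})^{m}(a(x,\pi)-\lambda)^{-1}\Vert_{\mathrm{op}}\le 2C$; combined with $|\lambda|\Vert(a(x,\pi)-\lambda)^{-1}\Vert_{\mathrm{op}}\le 1$ this controls $\Vert(\pi(\mathcal{M})^{m}-\lambda)(a(x,\pi)-\lambda)^{-1}\Vert_{\mathrm{op}}$. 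You should replace the diagonalisation step by an operator-theoretic estimate of this kind.
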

\begin{proof}  Any operator $a(x,\pi),$ $\pi\in \widehat{G},$ is  normal, so that for every $\lambda\in \mathbb{R}_{-}$ we have
    \begin{align*}
      &   \Vert (|\lambda|^{\frac{1}{m}}+\pi(\mathcal{M}))^m (a(x,\pi)-\lambda)^{-1}\Vert_{\textnormal{op}}\\
         &\asymp \Vert (|\lambda|^{\frac{1}{m}}+\pi(\mathcal{M}))^m (\pi(\mathcal{M})^m-\lambda)^{-1}(\pi(\mathcal{M})^m-\lambda)(a(x,\pi)-\lambda)^{-1}\Vert_{\textnormal{op}} \\
         &\lesssim \Vert (|\lambda|^{\frac{1}{m}}+\pi(\mathcal{M}))^m (\pi(\mathcal{M})^m-\lambda)^{-1}\Vert_{\textnormal{op}} \Vert (\pi(\mathcal{M})^m-\lambda)(a(x,\pi)-\lambda)^{-1}\Vert_{\textnormal{op}}\\
         &\lesssim \Vert (|\lambda|^{\frac{1}{m}}+\pi(\mathcal{M}))^m (\pi(\mathcal{M})^m-\lambda)^{-1}\Vert_{\textnormal{op}}.
    \end{align*}
{{Let us note that the condition $m\geq 0,$ implies that $\Vert \pi(\mathcal{M}))^{-m} \Vert_{\textnormal{op}}\leq 1.$ So, if $|\lambda|\leq 1/2,$ then   $|\lambda|\Vert \pi(\mathcal{M}))^{-m} \Vert_{\textnormal{op}}\leq 1/2,$ which implies that for $|\lambda|\leq 1/2,$ $1-\lambda \pi(\mathcal{M}))^{-m}$ is invertible on $H_\pi,$ its inverse is a bounded operator, and from
the first von-Neumann identity  
 $$ \Vert (1-\lambda \pi(\mathcal{M}))^{-m})^{-1}\Vert_{\textnormal{op}}\leq  ( 1-|\lambda| \Vert\pi(\mathcal{M})\Vert_{\textnormal{op}})^{-m})^{-1}\leq 2. $$ } } 
    
    Now, fixing again $\lambda\in \mathbb{R}_{-}$ observe that from the compactness of $[0,1/2]$ we deduce that
    \begin{align*}
        \sup_{0\leqslant \lambda\leqslant 1/2}\Vert (|\lambda|^{\frac{1}{m}}+\pi(\mathcal{M}))^m (\pi(\mathcal{M})^m-\lambda)^{-1}\Vert_{\textnormal{op}}&\asymp \sup_{0\leqslant \lambda\leqslant 1/2}\Vert \pi(\mathcal{M})^m (\pi(\mathcal{M})^m-\lambda)^{-1}\Vert_{\textnormal{op}} \\
        &\asymp \sup_{0\leqslant \lambda\leqslant 1/2}\Vert \pi(\mathcal{M})^m (\pi(\mathcal{M})^m-\lambda)^{-1}\Vert_{\textnormal{op}} \\
         &\asymp \sup_{0\leqslant \lambda\leqslant 1/2}\Vert  (I-\lambda\pi(\mathcal{M})^{-m})^{-1}\Vert_{\textnormal{op}} \\
          &\lesssim 1.
    \end{align*} On the other hand,
    \begin{align*}
    &  \sup_{\lambda\geqslant 1/2}\Vert (|\lambda|^{\frac{1}{m}}+\pi(\mathcal{M}))^m (\pi(\mathcal{M})^m-\lambda)^{-1}\Vert_{\textnormal{op}}  \\
      &=\sup_{\lambda\geqslant 1/2}\Vert (|\lambda|^{\frac{1}{m}}\pi(\mathcal{M})^{-1}+I)^m (I-\pi(\mathcal{M})^{-m}\lambda)^{-1}\Vert_{\textnormal{op}}\\
      &=\sup_{\lambda\geqslant 1/2}\Vert (\pi(\mathcal{M})^{-1}+|\lambda|^{-\frac{1}{m}}I)^m |\lambda|(I-\pi(\mathcal{M})^{-m}\lambda)^{-1}\Vert_{\textnormal{op}}\\
      &\lesssim \sup_{\lambda\geqslant 1/2}\Vert \pi(\mathcal{M})^{-m} |\lambda|(-\lambda)^{-1}\pi(\mathcal{M})^{m}\Vert_{\textnormal{op}}\\
      &=1.
    \end{align*}So, we have proved that $a$ is parameter-elliptic with respect to $\mathbb{R}_{-}.$ To prove that $\widehat{B}(x,\pi)\in S^{m\times\textnormal{Re}(s)}_{\rho,\delta}(G\times \widehat{G}),$ we can observe that for $\textnormal{Re}(s)<0,$ we can apply Theorem \ref{DunforRiesz}. If  $\textnormal{Re}(s)\geqslant 0,$ we can find $k\in \mathbb{N}$ such that $\textnormal{Re}(s)-k<0$ and consequently from the spectral calculus of operators we deduce that $a(x,\pi)^{\textnormal{Re}(s)-k}\in S^{m\times(\textnormal{Re}(s)-k)}_{\rho,\delta}(G\times \widehat{G}).$ So, from the calculus we conclude that   $$a(x,\pi)^{s}=a(x,\pi)^{s-k}a(x,\pi)^{k}\in S^{m\times\textnormal{Re}(s)}_{\rho,\delta}(G\times \widehat{G}).$$ Thus the proof is complete.
\end{proof}
\begin{corollary}\label{1/2}
Let $0\leqslant \delta< \rho\leqslant 1.$  Let  $a\in S^{m}_{\rho,\delta}(G\times \widehat{G}),$  be an  elliptic  symbol  where $m\geqslant 0$ and let us assume that $a$ is positive definite. Then 
$\widehat{B}(x,\pi)\equiv a(x,\pi)^\frac{1}{2}:=\exp(\frac{1}{2}\log(a(x,\pi)))\in S^{\frac{m}{2}}_{\rho,\delta}(G\times \widehat{G}).$
\end{corollary}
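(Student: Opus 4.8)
The plan is to obtain the Corollary as the special case $s=\tfrac12$ of the preceding Proposition. Its hypotheses --- $a\in S^{m}_{\rho,\delta}(G\times\widehat G)$ elliptic with $m\geqslant 0$, and $a(x,\pi)$ positive definite for a.e. $(x,\pi)\in G\times\widehat G$ --- are exactly those required there, so I would simply invoke the Proposition with $s=\tfrac12\in\mathbb{C}$. Since $\textnormal{Re}(s)=\tfrac12$, its conclusion reads $\widehat B(x,\pi)=\exp(\tfrac12\log(a(x,\pi)))\in S^{m\times\textnormal{Re}(s)}_{\rho,\delta}(G\times\widehat G)=S^{m/2}_{\rho,\delta}(G\times\widehat G)$, which is precisely the assertion to be proved.

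For completeness I would unwind, in this concrete case, the branch of the Proposition's argument that is actually used. Positive definiteness together with self-adjointness of $a(x,\pi)$ confines its spectrum to $(0,\infty)$, so $\log(a(x,\pi))$ and hence $a(x,\pi)^{1/2}$ are unambiguously defined by the spectral theorem, independently of the choice of logarithmic branch; moreover, by the Proposition, $a$ is parameter-elliptic with respect to $\mathbb{R}_{-}$. Because $\textnormal{Re}(\tfrac12)\geqslant 0$, Theorem \ref{DunforRiesz} cannot be applied to the exponent $\tfrac12$ directly; instead one writes $a(x,\pi)^{1/2}=a(x,\pi)^{-1/2}\,a(x,\pi)$, a pointwise product of commuting Borel functions of the self-adjoint operator $a(x,\pi)$. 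The first factor is $F(A)$ for $F(\lambda)=\lambda^{-1/2}$, which satisfies $|F(\lambda)|\leqslant C|\lambda|^{-1/2}$ with $-\tfrac12<0$, so Theorem \ref{DunforRiesz} yields $a^{-1/2}\in S^{-m/2}_{\rho,\delta}(G\times\widehat G)$; since pointwise products respect the H\"ormander classes --- a consequence of the Leibniz rule \eqref{differenceespcia;l} for difference operators together with the ordinary product rule for $X_x$, giving $S^{m_1}_{\rho,\delta}\cdot S^{m_2}_{\rho,\delta}\subset S^{m_1+m_2}_{\rho,\delta}$ --- multiplying by $a\in S^{m}_{\rho,\delta}(G\times\widehat G)$ gives $a^{1/2}\in S^{-m/2+m}_{\rho,\delta}(G\times\widehat G)=S^{m/2}_{\rho,\delta}(G\times\widehat G)$.

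I do not expect a genuine obstacle, since the statement is a bona fide corollary; the only points meriting a word of care --- both already handled inside the Proposition --- are that $s=\tfrac12$ falls in the regime $\textnormal{Re}(s)\geqslant 0$, so one factors off an integer power of $a$ rather than integrating directly along $\partial\Lambda_\varepsilon$, and that self-adjointness and positivity of $a(x,\pi)$ make the square root well defined.
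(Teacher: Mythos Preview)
Your proposal is correct and matches the paper's approach: the paper states the Corollary immediately after the Proposition without any proof, so the intended argument is precisely to take $s=\tfrac12$ in that Proposition. Your added unwinding of the $\textnormal{Re}(s)\geqslant 0$ branch (factoring $a^{1/2}=a^{-1/2}\cdot a$ and using the Leibniz rule for pointwise products of symbols) faithfully reproduces the relevant step of the Proposition's proof in this special case.
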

Now, let us assume that  for some positive real number $C_0>0,$ the symbol
\begin{equation*}
    A(x,\pi):=\frac{1}{2}(a(x,\pi)+a(x,\pi)^{*}),\,(x,\pi)\in G\times \widehat{G},\,\,a\in S^{m}_{\rho,\delta}(G\times \widehat{G}), 
\end{equation*}is elliptic and satisfies 
\begin{align*}
  A(x,\pi)\geqslant C_0\pi(\mathcal{M})^{m},
\end{align*} in the operator sense, that is, $ A(x,\pi)-C_0\pi(\mathcal{M})^{m} \geqslant 0.$ Then, for $C_1\in(0, {C_0})$ we have that
\begin{align*}
 A(x,\pi)-C_{1}  \pi(\mathcal{M})^{m}\geqslant \left({C_0}-C_1\right) \pi(\mathcal{M})^{m}>0.
\end{align*}If  $0\leqslant \delta<\rho\leqslant 1,$   from Corollary \ref{1/2}, we have that
\begin{align*}
    q(x,\pi):=(A(x,\pi)-C_{1}  \pi(\mathcal{M})^{m})^{\frac{1}{2}}\in  S^{\frac{m}{2}}_{\rho,\delta}(G\times \widehat{G}).
\end{align*}From the symbolic calculus we obtain
\begin{align*}
  q(x,\pi)q(x,\pi)^*= A(x,\pi)-C_{1}  \pi(\mathcal{M})^{m}+r(x,\pi),\,\,   r(x,\pi)\in  S^{m-(\rho-\delta)}_{\rho,\delta}(G\times \widehat{G}).
\end{align*}Now, let us assume that $u\in C^\infty(G).$ Then we have
\begin{align*}
    \textnormal{Re}(a(x,D)u,u)&=\frac{1}{2}((a(x,D)+\textnormal{op}(a^*))u,u)=(A(x,D)u,u)\\
    &=C_{1}(\mathcal{M}_{m}u,u)+(q(x,D)q(x,D)^{*}u,u)+(r(x,D)u,u)\\
    &=C_{1}(\mathcal{M}_{m}u,u)+(q(x,D)^{*}u,q(x,D)^*u)-(r(x,D)u,u)\\
    &\geqslant C_{1}\Vert u\Vert_{{L}^{2}_{\frac{m}{2}}(G)}-(r(x,D)u,u)\\
     &= C_{1}\Vert u\Vert_{{L}^{2}_{\frac{m}{2}}(G)}-(\mathcal{M}^{-\frac{m-(\rho-\delta)}{2}}r(x,D)u,\mathcal{M}^{\frac{m-(\rho-\delta)}{2}}u).
\end{align*}Observe that
\begin{align*}
    (\mathcal{M}^{-\frac{m-(\rho-\delta)}{2}}r(x,D)u,\mathcal{M}^{\frac{m-(\rho-\delta)}{2}}u)&\leqslant \Vert \mathcal{M}^{-\frac{m-(\rho-\delta)}{2}}r(x,D)u \Vert_{L^2(G)}\Vert u\Vert_{L^{2  }_{\frac{m-(\rho-\delta)}{2}}(G)}\\
    &= \Vert r(x,D)u \Vert_{L^{2  }_{-\frac{m-(\rho-\delta)}{2}}(G)}\Vert u\Vert_{L^{2  }_{\frac{m-(\rho-\delta)}{2}}(G)}\\
     &\leqslant C_1\Vert u \Vert_{L^{2  }_{\frac{m-(\rho-\delta)}{2}}(G)}\Vert u\Vert_{L^{2  }_{\frac{m-(\rho-\delta)}{2}}(G)},
\end{align*}where in the last line we have used the Sobolev boundedness of $r(x,D)$ from $L^{2  }_{\frac{m-(\rho-\delta)}{2}}(G)$ into $L^{2  }_{-\frac{m-(\rho-\delta)}{2}}(G).$ Consequently, we deduce the lower bound
\begin{align*}
    \textnormal{Re}(a(x,D)u,u) \geqslant C_{1}\Vert u\Vert_{{L}^{2}_{\frac{m}{2}}(G)}-C\Vert u\Vert_{L^{2  }_{\frac{m-(\rho-\delta)}{2}}(G)}^2.
\end{align*} If we temporarily assume that for every $\varepsilon>0,$ there exists $C_{\varepsilon}>0,$ such that
\begin{equation}\label{lemararo}
    \Vert u\Vert_{{L}^{2}_{\frac{m-(\rho-\delta)}{2}}(G)}^2\leqslant \varepsilon\Vert u\Vert_{{L}^{2}_{\frac{m}{2}}(G)}^2+C_{\varepsilon}\Vert u \Vert_{L^2(G)}^2,
\end{equation} for $0<\varepsilon<C_{1}$ we have
\begin{align*}
    \textnormal{Re}(a(x,D)u,u) \geqslant (C_{1}-\varepsilon)\Vert u\Vert_{{L}^{2}_{\frac{m}{2}}(G)}-C_{\varepsilon}\Vert u\Vert_{L^2(G)}^2.
\end{align*}So, with the exception of the proof of \eqref{lemararo} we have deduced the following estimate which is the main result of this subsection.
\begin{theorem}[G\r{a}rding inequality for graded Lie groups]\label{GardinTheorem}  For $0\leqslant \delta<\rho\leqslant 1,$ let $a(x,D):C^\infty(G)\rightarrow\mathscr{D}'(G)$ be an operator with symbol  $a\in {S}^{m}_{\rho,\delta}( G\times \widehat{G})$, $m\in \mathbb{R}$. Let us assume that for some positive real number $C_0>0,$ the real part of  $a(x,\pi),$ that is
\begin{equation*}
    A(x,\pi):=\frac{1}{2}(a(x,\pi)+a(x,\pi)^{*}),\,(x,\pi)\in G\times \widehat{G},\,\,a\in S^{m}_{\rho,\delta}(G\times \widehat{G}), 
\end{equation*}is elliptic and satisfies
\begin{align*}
  A(x,\pi)\geqslant C_0\pi(\mathcal{M})^{m},\quad \pi \in \widehat{G},
\end{align*}  in the operator sense.  Then, there exist $C_{1},C_{2}>0,$ such that the lower bound
\begin{align}
    \textnormal{Re}(a(x,D)u,u) \geqslant C_1\Vert u\Vert_{{L}^{2}_{\frac{m}{2}}(G)}-C_2\Vert u\Vert_{L^2(G)}^2,
\end{align}holds true for every $u\in C^\infty(G).$
\end{theorem}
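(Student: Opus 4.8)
The plan is to complete the argument along the lines already sketched before the statement: the whole reduction has in fact been carried out there, and the only gap is the interpolation inequality \eqref{lemararo}, so I would recall the reduction and then prove \eqref{lemararo}. \emph{Reduction.} Fix $C_1\in(0,C_0)$. The symbol $A(x,\pi)-C_1\pi(\mathcal{M})^m$ is elliptic and positive definite in $S^m_{\rho,\delta}(G\times\widehat{G})$, so Corollary \ref{1/2} gives $q(x,\pi):=(A(x,\pi)-C_1\pi(\mathcal{M})^m)^{1/2}\in S^{m/2}_{\rho,\delta}(G\times\widehat{G})$, and the composition rule of Theorem \ref{calculus}(ii) yields $q(x,\pi)q(x,\pi)^*=A(x,\pi)-C_1\pi(\mathcal{M})^m+r(x,\pi)$ with $r\in S^{m-(\rho-\delta)}_{\rho,\delta}(G\times\widehat{G})$; it is here that the hypothesis $\delta<\rho$ enters, guaranteeing that $r$ has strictly lower order. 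Expanding $\textnormal{Re}(a(x,D)u,u)=(A(x,D)u,u)$, isolating the nonnegative term $\Vert q(x,D)^*u\Vert_{L^2(G)}^2$, and estimating the contribution of $r(x,D)$ by the Cauchy--Schwarz inequality together with the $L^2$-Sobolev continuity $r(x,D):L^2_{\frac{m-(\rho-\delta)}{2}}(G)\to L^2_{-\frac{m-(\rho-\delta)}{2}}(G)$ (Theorem \ref{calculus}(iii), after conjugating by powers of $\mathcal{M}$), one arrives, exactly as in the text, at
\begin{equation*}
\textnormal{Re}(a(x,D)u,u)\geqslant C_1\Vert u\Vert_{L^2_{m/2}(G)}^2-C\Vert u\Vert_{L^2_{\frac{m-(\rho-\delta)}{2}}(G)}^2 .
\end{equation*}
Granting \eqref{lemararo} and choosing $\varepsilon>0$ with $C\varepsilon<C_1$, the last term is absorbed into $C\varepsilon\Vert u\Vert_{L^2_{m/2}(G)}^2+CC_\varepsilon\Vert u\Vert_{L^2(G)}^2$, and the theorem follows with $C_1$ replaced by $C_1-C\varepsilon>0$ and $C_2=CC_\varepsilon$.

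\emph{Proof of \eqref{lemararo}.} Since $\rho>\delta$ we have $\frac{m-(\rho-\delta)}{2}<\frac{m}{2}$. If $m\leqslant\rho-\delta$ the left-hand order is non-positive and the continuous embedding $L^2(G)\hookrightarrow L^2_{\frac{m-(\rho-\delta)}{2}}(G)$ makes \eqref{lemararo} trivial (take $C_\varepsilon=1$), so assume $m>\rho-\delta$ and put $\theta:=\frac{m-(\rho-\delta)}{m}\in(0,1)$. Let $E$ be the spectral measure of the positive Rockland operator $\mathcal{R}$; by the definition of the Sobolev norms in \eqref{L2ab2} and the spectral theorem,
\begin{equation*}
\Vert u\Vert_{L^2_s(G)}^2=\int_0^\infty(1+\lambda)^{\frac{2s}{\nu}}\,d\Vert E(\lambda)u\Vert_{L^2(G)}^2,\qquad s\in\mathbb{R}.
\end{equation*}
By Young's inequality, for every $\varepsilon>0$ there is $C_\varepsilon>0$ with $t^{\theta}\leqslant\varepsilon t+C_\varepsilon$ for all $t\geqslant 0$; applying this with $t=(1+\lambda)^{\frac{m}{\nu}}$ gives $(1+\lambda)^{\frac{m-(\rho-\delta)}{\nu}}\leqslant\varepsilon(1+\lambda)^{\frac{m}{\nu}}+C_\varepsilon$ for all $\lambda\geqslant 0$, and integrating against $d\Vert E(\lambda)u\Vert_{L^2(G)}^2$ yields \eqref{lemararo}. (Alternatively, interpolating $[L^2(G),L^2_{m/2}(G)]_\theta=L^2_{(m-(\rho-\delta))/2}(G)$ gives $\Vert u\Vert_{L^2_{(m-(\rho-\delta))/2}(G)}\leqslant\Vert u\Vert_{L^2(G)}^{1-\theta}\Vert u\Vert_{L^2_{m/2}(G)}^{\theta}$, after which the $\varepsilon$-Young inequality finishes the job.)

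\emph{Main difficulty.} Because the analytic heart of the proof is already present before the statement, the actual work is concentrated in two places: the symbolic-calculus identity $q q^*=A-C_1\pi(\mathcal{M})^m+r$ with $r\in S^{m-(\rho-\delta)}_{\rho,\delta}(G\times\widehat{G})$, whose validity rests on $\delta<\rho$, on closedness of the H\"ormander classes under composition, and on the existence of the positive square root of a positive elliptic symbol supplied by Corollary \ref{1/2}; and the interpolation inequality \eqref{lemararo}. The latter is the only genuinely new ingredient and, as shown, is elementary once phrased through the spectral calculus of $\mathcal{R}$; the one mild subtlety is the degenerate range $m\leqslant\rho-\delta$, disposed of by a Sobolev embedding.
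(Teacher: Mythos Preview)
Your proposal is correct and follows essentially the same route as the paper: the reduction via the square root $q=(A-C_1\pi(\mathcal{M})^m)^{1/2}$ and the symbolic calculus is exactly what is done in the text preceding the statement, and your proof of the interpolation inequality \eqref{lemararo} is the same elementary spectral-side argument as the paper's Lemma~4.4, only phrased through the spectral measure of $\mathcal{R}$ on $L^2(G)$ rather than through Plancherel and the eigenvalues $\nu_{ii}(\pi)$ of $\pi(\mathcal{R})$. Your explicit case split $m\leqslant\rho-\delta$ versus $m>\rho-\delta$ is in fact slightly cleaner than the paper's hypothesis ``$s\geqslant t\geqslant 0$ or $s,t<0$'', which as stated does not literally cover the range $0\leqslant m<\rho-\delta$.
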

In view of the analysis above, for the proof of Theorem \ref{GardinTheorem} we only need to prove \eqref{lemararo}. However we will deduce it from the following more general lemma.
\begin{lemma}Let us assume that $s\geqslant t\geqslant 0$ or that $s,t<0.$ Then, for every $\varepsilon>0,$ there exists $C_\varepsilon>0$ such that 
\begin{equation}\label{lemararo2}
    \Vert u\Vert_{{L}^{2}_{t}(G)}^2\leqslant \varepsilon\Vert u\Vert_{{L}^{2}_{s}(G)}^2+C_{\varepsilon}\Vert u \Vert_{L^2(G)}^2,
\end{equation}holds true for every $u\in C^\infty(G).$ 
\end{lemma}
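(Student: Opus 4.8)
The plan is to reduce \eqref{lemararo2} to an elementary scalar inequality on the spectrum of $\mathcal{R}$ via the spectral theorem. Recall $\mathcal{M}=(1+\mathcal{R})^{\frac{1}{\nu}}$ and let $\{E(\lambda)\}_{\lambda\geqslant 0}$ be the spectral measure of the positive self-adjoint operator $\mathcal{R}$ on $L^{2}(G)$. For any $a\in\mathbb{R}$ and $u\in C^{\infty}(G)$, by the definition of the Sobolev norm and the spectral calculus,
\begin{equation*}
\Vert u\Vert_{L^{2}_{a}(G)}^{2}=\Vert (1+\mathcal{R})^{\frac{a}{\nu}}u\Vert_{L^{2}(G)}^{2}=\int\limits_{0}^{\infty}(1+\lambda)^{\frac{2a}{\nu}}\,d(E(\lambda)u,u),
\end{equation*}
while $\int_{0}^{\infty}d(E(\lambda)u,u)=\Vert u\Vert_{L^{2}(G)}^{2}$ and $d(E(\lambda)u,u)$ is a finite positive measure. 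Hence \eqref{lemararo2} follows once I prove the pointwise bound: for every $\varepsilon>0$ there is $C_{\varepsilon}>0$ with $(1+\lambda)^{\frac{2t}{\nu}}\leqslant \varepsilon(1+\lambda)^{\frac{2s}{\nu}}+C_{\varepsilon}$ for all $\lambda\geqslant 0$; equivalently, writing $\mu:=(1+\lambda)^{\frac{1}{\nu}}\in[1,\infty)$, that $\mu^{2t}\leqslant \varepsilon\,\mu^{2s}+C_{\varepsilon}$ for all $\mu\geqslant 1$.

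The scalar inequality is then handled case by case. \textbf{Case $s,t<0$:} since $\mu\geqslant 1$ and $2t<0$ one has $\mu^{2t}\leqslant 1$, so the bound holds with $C_{\varepsilon}=1$ and no $\varepsilon$-term is even needed (equivalently $(1+\mathcal{R})^{\frac{t}{\nu}}$ is a contraction on $L^{2}(G)$). \textbf{Case $s\geqslant t\geqslant 0$:} if $t=0$ then $\mu^{2t}=1$ and we take $C_{\varepsilon}=1$; if $0<t<s$, set $\theta:=t/s\in(0,1)$ and apply the weighted arithmetic--geometric mean inequality $x^{\theta}y^{1-\theta}\leqslant\theta x+(1-\theta)y$ ($x,y\geqslant 0$) with $x=\eta\,\mu^{2s}$ and $y=\eta^{-\theta/(1-\theta)}$, $\eta>0$ free. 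Since $s\theta=t$ we get $x^{\theta}y^{1-\theta}=\mu^{2t}$, hence
\begin{equation*}
\mu^{2t}\leqslant\theta\eta\,\mu^{2s}+(1-\theta)\,\eta^{-\theta/(1-\theta)},
\end{equation*}
and choosing $\eta=\varepsilon/\theta$ gives $\mu^{2t}\leqslant\varepsilon\,\mu^{2s}+C_{\varepsilon}$ with $C_{\varepsilon}=(1-\theta)(\varepsilon/\theta)^{-\theta/(1-\theta)}$. (When $s=t>0$ the estimate holds for $\varepsilon\geqslant 1$ with $C_\varepsilon=0$; the strict case $s>t$ is the one occurring in \eqref{lemararo}.) An alternative to this Young step is to split $\{\mu\geqslant 1\}$ into $\{1\leqslant\mu\leqslant R_{\varepsilon}\}$ and $\{\mu>R_{\varepsilon}\}$ with $R_{\varepsilon}:=\varepsilon^{-1/(2(s-t))}$: on the first set $\mu^{2t}\leqslant R_{\varepsilon}^{2t}=:C_{\varepsilon}$ since $t\geqslant 0$, and on the second set $\mu^{2(t-s)}\leqslant\varepsilon$ forces $\mu^{2t}\leqslant\varepsilon\,\mu^{2s}$.

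Finally I would integrate the scalar bound against the measure $d(E(\lambda)u,u)$ over $[0,\infty)$ to recover \eqref{lemararo2}; specialising to $s=\tfrac{m}{2}$ and $t=\tfrac{m-(\rho-\delta)}{2}$ (so $s>t$) yields \eqref{lemararo} and completes the proof of Theorem \ref{GardinTheorem}. I do not expect any genuine obstacle: the only step requiring a little care is the passage from the $L^{2}$-operator inequality to the scalar one through the spectral theorem; once on the spectrum, \eqref{lemararo2} is just convexity of the exponential applied to $2t\log\mu=\theta\cdot(2s\log\mu)+(1-\theta)\cdot 0$, i.e. a one-line application of Young's inequality.
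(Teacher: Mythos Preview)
Your proof is correct and follows essentially the same strategy as the paper: reduce \eqref{lemararo2} to the scalar inequality $(1+\lambda)^{2t/\nu}\leqslant\varepsilon(1+\lambda)^{2s/\nu}+C_{\varepsilon}$ on the spectral parameter and then integrate. The only cosmetic difference is that the paper carries out the spectral decomposition via the group Plancherel theorem on $\widehat{G}$ (using the eigenvalues $\nu_{ii}(\pi)$ of $\pi(\mathcal{R})$), whereas you apply the spectral theorem for the self-adjoint operator $\mathcal{R}$ on $L^{2}(G)$ directly; these are equivalent, and your route is arguably more self-contained since it bypasses the Fourier machinery. Your explicit Young/splitting argument for the scalar bound is more detailed than the paper's, and your parenthetical remark that the case $s=t>0$ only yields the estimate for $\varepsilon\geqslant1$ is a pertinent observation (the application to \eqref{lemararo} has $s>t$, so this does not affect Theorem~\ref{GardinTheorem}).
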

\begin{proof}
    Let $\varepsilon>0.$ Then, there exists $C_{\varepsilon}>0$ such that
    \begin{equation*}
        (1+\nu_{ii}(\pi))^{2t/\nu}-\varepsilon (1+\nu_{ii}(\pi))^{2s/\nu}\leqslant C_{\varepsilon},
    \end{equation*}uniformly in  $\pi\in \widehat{G}.$ Then \eqref{lemararo2}  follows from the Plancherel theorem. Indeed,
    \begin{align*}
     \Vert u\Vert_{{L}^{2}_{\frac{t}{2}}(G)}^2& =\int\limits_{ \widehat{G}}\sum_{i,j=1}^{\infty} (1+\nu_{ii}(\pi))^{2t/\nu}|\widehat{u}_{ij}(\pi)|^{2}d\pi  \\
     &\leqslant \int\limits_{ \widehat{G}}\sum_{i,j=1}^{\infty} (\varepsilon(1+\nu_{ii}(\pi))^{2s/\nu}+C_\varepsilon)|\widehat{u}_{ij}(\pi)|^{2} d\pi \\
    &= \varepsilon\Vert u\Vert_{{L}^{2}_{s}(G)}^2+C_{\varepsilon}\Vert u \Vert_{L^2(G)}^2,
    \end{align*}completing the proof.
\end{proof}
\begin{corollary}\label{GardinTheorem2}  Let $a(x,D):C^\infty(G)\rightarrow\mathscr{D}'(G)$ be an operator with symbol  $a\in {S}^{m}_{\rho,\delta}( G\times \widehat{G})$, $m\in \mathbb{R}$. Let us assume that for some $C_0>0,$ the symbol $a(x,\pi)$ is elliptic and satisfies
\begin{align*}
  a(x,\pi)\geqslant C_0\pi(\mathcal{M})^{m},
\end{align*}  in the operator sense.  Then, there exist $C_{1},C_{2}>0,$ such that the lower bound
\begin{align}
    \textnormal{Re}(a(x,D)u,u) \geqslant C_1\Vert u\Vert_{{L}^{2}_{\frac{m}{2}}(G)}-C_2\Vert u\Vert_{L^2(G)}^2,
\end{align}holds true for every $u\in C^\infty(G).$
\end{corollary}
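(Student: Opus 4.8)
The plan is to obtain Corollary \ref{GardinTheorem2} as an immediate consequence of Theorem \ref{GardinTheorem}; the only work is to recognise that the present hypotheses are a special case of those in that theorem.

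First I would observe that the assumption ``$a(x,\pi)\geqslant C_0\pi(\mathcal{M})^{m}$ in the operator sense'' is an honest operator inequality on the \emph{complex} Hilbert space $H_\pi$, and hence forces $a(x,\pi)$ to be symmetric for a.e. $(x,\pi)\in G\times\widehat{G}$. Indeed, the stated inequality
\[
\langle a(x,\pi)v,v\rangle_{H_\pi}\ \geqslant\ C_0\,\langle \pi(\mathcal{M})^{m}v,v\rangle_{H_\pi},\qquad v\in H_\pi^\infty,
\]
presupposes that $\langle a(x,\pi)v,v\rangle_{H_\pi}\in\mathbb{R}$ for every smooth vector $v$ (and, since $\pi(\mathcal{M})^{m}\geqslant 0$, also that $a(x,\pi)\geqslant 0$). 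On a complex Hilbert space, reality of the quadratic form of an operator is equivalent to the operator being symmetric, so by polarisation $a(x,\pi)=a(x,\pi)^{*}$ on $H_\pi^\infty$; consequently the real part $A(x,\pi):=\tfrac12\bigl(a(x,\pi)+a(x,\pi)^{*}\bigr)$ coincides with $a(x,\pi)$ itself.

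With this identification the hypotheses of Theorem \ref{GardinTheorem} hold verbatim: $A=a\in S^{m}_{\rho,\delta}(G\times\widehat{G})$ (and, by the adjoint rule of Theorem \ref{calculus}(i), so is $a^{*}=a$); the real part $A(x,\pi)=a(x,\pi)$ is elliptic, which is exactly the standing assumption of the corollary; and $A(x,\pi)=a(x,\pi)\geqslant C_0\pi(\mathcal{M})^{m}$ for $\pi\in\widehat{G}$. Invoking Theorem \ref{GardinTheorem} (with the same restriction $0\leqslant\delta<\rho\leqslant 1$) then yields constants $C_1,C_2>0$ with $\textnormal{Re}(a(x,D)u,u)\geqslant C_1\Vert u\Vert_{L^2_{m/2}(G)}-C_2\Vert u\Vert_{L^2(G)}^{2}$ for all $u\in C^\infty(G)$, which is the claimed bound.

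I do not anticipate any real obstacle here: the corollary is a reformulation of Theorem \ref{GardinTheorem} for self-adjoint symbols, the entire content being the elementary fact that operator positivity on $H_\pi$ entails self-adjointness, so that ``real part elliptic'' may be replaced by ``symbol elliptic'' without further ado. Should one prefer to read $a(x,\pi)\geqslant C_0\pi(\mathcal{M})^{m}$ as an accretivity condition $\textnormal{Re}\langle a(x,\pi)v,v\rangle_{H_\pi}\geqslant C_0\Vert\pi(\mathcal{M})^{m/2}v\Vert_{H_\pi}^{2}$ on a possibly non-symmetric $a$, the sole additional point would be to check that the ellipticity of $a$ propagates to $A=\tfrac12(a+a^{*})$; this follows because $B:=\pi(\mathcal{M})^{-m/2}A(x,\pi)\pi(\mathcal{M})^{-m/2}\geqslant C_0 I$ is boundedly invertible with $\Vert B^{-1}\Vert_{\textnormal{op}}\leqslant C_0^{-1}$ uniformly in $(x,\pi)$, and the symbolic calculus of Theorem \ref{calculus} upgrades this to $A^{-1}\in S^{-m}_{\rho,\delta}(G\times\widehat{G})$, i.e. to global ellipticity of $A$ in the sense of Section \ref{LEH}; Theorem \ref{GardinTheorem} then applies as before.
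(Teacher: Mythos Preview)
Your proposal is correct and matches the paper's intent: the corollary is stated without proof because it is the self-adjoint special case of Theorem \ref{GardinTheorem}, and your observation that the operator inequality $a(x,\pi)\geqslant C_0\pi(\mathcal{M})^m$ on the complex Hilbert space $H_\pi$ forces $a(x,\pi)=a(x,\pi)^*$, whence $A(x,\pi)=a(x,\pi)$, is exactly the missing sentence. The alternative accretive reading you sketch is not needed here but is harmless.
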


\section{Analysis of diffusion problems on graded groups}\label{GST}
The  complex 
pseudo-differential functional calculus here developed for the global H\"ormander classes on graded Lie groups  will be applied to the    well-posedness  of parabolic and hyperbolic problems.
More precisely, if $T>0$ we will study the regularity, existence and uniqueness  for the following Cauchy problem 
\begin{equation}\label{PVI}(\textnormal{IVP}): \begin{cases}\frac{\partial v}{\partial t}=K(t,x,D)v+f ,& \text{ }v\in \mathscr{D}'((0,T)\times G),
\\v(0)=u_0 ,& \text{ } \end{cases}
\end{equation}where the initial data $u_0\in H^s(G),$ $K(t):=K(t,x,D)\in \Psi^{m}_{\rho,\delta}(G\times \widehat{G}),$ $f\in  L^2([0,T],H^s(G)) ,$ $m>0,$ $s\in \mathbb{R},$ and  a suitable positivity condition is imposed on $K.$ Indeed, we consider the operator
\begin{equation*}
    \textnormal{Re}(K(t)):=\frac{1}{2}(K(t)+K(t)^*),\,\,0\leqslant t\leqslant T,
\end{equation*}to be  elliptic, and in order to use the G\r{a}rding inequality (see Theorem \ref{GardinTheorem}),  we will assume that there is a positive real number $C_0>0,$ such that for any $t,$ 
\begin{align*}
  \sigma_{K(t)}(x,\pi)\geqslant C_0\pi(\mathcal{M})^{m},\quad \pi \in \widehat{G},
\end{align*}  in the operator sense.
We recall that for any $s\in \mathbb{R},$ the Sobolev space $H^{s}(G)\equiv L^2_s(G)$ is defined as the completion of $C^{\infty}_0(G)$ with respect to the norm
$$  \Vert u\Vert_{H^s(G)}\equiv \Vert u\Vert_{L^2_s(G)} :=\Vert (1+\mathcal{R})^{\frac{s}{\nu}}u\Vert_{L^2(G)}.  $$

Under such assumptions we will prove the existence and uniqueness of a solution $v\in C^1([0,T],L^2(G))\cap C([0,T],H^{m/2}(G)).$ We will start with the following energy estimate.
\begin{proposition}\label{energyestimate}
Let $K(t)=K(t,x,D)\in \Psi^{m}_{\rho,\delta}(G\times \widehat{G}),$ $0\leqslant \delta<\rho\leqslant  1,$  be a  pseudo-differential operator of order $m>0,$ such that: 
\begin{itemize}
    \item $\textnormal{Re}(K(t))$ is  elliptic  for every  $t\in[0,T]$ with $T>0.$
    \item The symbol $\sigma_{K(t)}(x,\pi)$ of $K(t)$ satisfies the operator inequality $$\sigma_{K(t)}(x,\pi)\geqslant C_0\pi(\mathcal{M})^{m}, \pi \in \widehat{G},$$ for some $C_0>0.$  
\end{itemize}
  If  $v\in C^1([0,T], L^2(G) )  \cap C([0,T],H^{m/2}(G)) ,$ then there exist $C,C'>0,$ such that
\begin{equation}
\Vert v(t) \Vert^2_{L^2(G)}\leqslant   \left(C\Vert v(0) \Vert^2_{L^2(G)}+C'\int\limits_{0}^T \Vert (\partial_{t}-K(\tau))v(\tau) \Vert^2_{L^2(G)}d\tau \right),  
\end{equation}holds for every $0\leqslant t\leqslant T.$ Moreover, we also have the estimate
\begin{equation}\label{Q*}
\Vert v(t) \Vert^2_{L^2(G)}\leqslant   \left(C\Vert v(T) \Vert^2_{L^2(G)}+C'\int\limits_{0}^T \Vert (\partial_{t}-K(\tau)^{*})v(\tau) \Vert^2_{L^2(G)}d\tau \right).  
\end{equation}
\end{proposition}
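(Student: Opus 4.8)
The plan is to run the classical energy/Gronwall argument, whose single nontrivial ingredient is the G\r{a}rding inequality of Theorem~\ref{GardinTheorem}. The two standing hypotheses --- that $\textnormal{Re}(K(t))=\frac{1}{2}(K(t)+K(t)^{*})$ is elliptic and that $\sigma_{K(t)}(x,\pi)\geqslant C_0\pi(\mathcal{M})^{m}$ in the operator sense --- are exactly the strong-ellipticity conditions under which Theorem~\ref{GardinTheorem} provides constants $C_1>0$ and $C_2>0$ with
\[
\textnormal{Re}(K(t)u,u)_{L^2(G)}=(\textnormal{Re}(K(t))u,u)_{L^2(G)}\geqslant C_1\Vert u\Vert_{L^2_{m/2}(G)}^{2}-C_2\Vert u\Vert_{L^2(G)}^{2},\qquad u\in C^\infty(G).
\]
Because $[0,T]$ is compact and $t\mapsto K(t)$ is continuous in $\Psi^{m}_{\rho,\delta}(G\times\widehat{G})$, while the G\r{a}rding constants depend only on $C_0$ and on finitely many of the seminorms $\Vert\sigma_{K(t)}\Vert_{k,S^{m}_{\rho,\delta}}$, one may take $C_1,C_2$ independent of $t\in[0,T]$.

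First I would set $g(t):=(\partial_t-K(t))v(t)$, so that $\partial_t v(t)=K(t)v(t)+g(t)$. Since $v(t)\in H^{m/2}(G)$ for each $t$ and $K(t)\colon H^{m/2}(G)\to H^{-m/2}(G)$ is bounded (Sobolev boundedness of pseudo-differential operators of order $m$), the pairing $(K(t)v(t),v(t))_{L^2(G)}$ makes sense as an $H^{-m/2}$--$H^{m/2}$ duality and, by $v\in C([0,T],H^{m/2}(G))$, depends continuously on $t$; together with $v\in C^1([0,T],L^2(G))$ this legitimises differentiating the energy $E(t):=\Vert v(t)\Vert_{L^2(G)}^{2}$ and gives
\[
E'(t)=2\,\textnormal{Re}(\partial_t v(t),v(t))_{L^2(G)}=2\,\textnormal{Re}(K(t)v(t),v(t))+2\,\textnormal{Re}(g(t),v(t)).
\]

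Inserting the G\r{a}rding inequality (at $u=v(t)$, permissible by density), discarding the nonnegative term $2C_1\Vert v(t)\Vert_{L^2_{m/2}(G)}^{2}$, and bounding the forcing contribution by Cauchy--Schwarz and Young, $2\,\textnormal{Re}(g(t),v(t))\geqslant-\Vert g(t)\Vert_{L^2(G)}^{2}-\Vert v(t)\Vert_{L^2(G)}^{2}$, I reach the Gronwall-type differential inequality $E'(t)\geqslant-(2C_2+1)E(t)-\Vert g(t)\Vert_{L^2(G)}^{2}$; the complementary bound $E'(t)\leqslant(2C_2+1)E(t)+\Vert g(t)\Vert_{L^2(G)}^{2}$ follows in the same way from the upper-bound form of the G\r{a}rding inequality for $-\textnormal{Re}(K(t))$ (equivalently, by running the computation on $s\mapsto v(T-s)$). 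Multiplying the latter by the integrating factor $e^{-(2C_2+1)t}$ and integrating over $[0,t]$ yields, after absorbing exponentials into constants, the first estimate $\Vert v(t)\Vert_{L^2(G)}^{2}\leqslant C\Vert v(0)\Vert_{L^2(G)}^{2}+C'\int_0^T\Vert(\partial_t-K(\tau))v(\tau)\Vert_{L^2(G)}^{2}\,d\tau$, with $C,C'$ depending only on $T,C_1,C_2$. For the second estimate \eqref{Q*} one repeats the whole scheme with $K(t)^{*}$ in place of $K(t)$ --- legitimate because $\textnormal{Re}(K(t)^{*})=\textnormal{Re}(K(t))$, so the same G\r{a}rding inequality is available --- now integrating the $\geqslant$ form over $[t,T]$ (integrating factor $e^{(2C_2+1)t}$), which turns the initial datum $v(0)$ into the terminal datum $v(T)$.

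I do not anticipate a deep obstacle: the analytic weight has been absorbed into Theorem~\ref{GardinTheorem}, itself a consequence of the complex functional calculus of Section~\ref{SFC}. The two points that genuinely require care are (i) justifying the differentiation of $E(t)$ and the duality pairing $(K(t)v(t),v(t))$ at exactly the regularity $v\in C^1([0,T],L^2(G))\cap C([0,T],H^{m/2}(G))$ --- which is why this mixed space is built into the statement --- and (ii) the uniformity in $t$ of the G\r{a}rding constants, which rests on the continuity, hence uniform Fr\'echet-seminorm boundedness on the compact interval $[0,T]$, of the family $t\mapsto K(t)$, and is what makes the final constants $C,C'$ legitimate.
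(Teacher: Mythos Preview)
Your approach is essentially the paper's: differentiate $\Vert v(t)\Vert^2_{L^2(G)}$, apply the G\r{a}rding inequality to $-K(t)$ (what you call the ``upper-bound form''), bound the cross term, and close with Gronwall; for \eqref{Q*} the paper likewise reruns the computation on $v(T-\cdot)$ with $K^*$ in place of $K$, using $\textnormal{Re}(K(t)^*)=\textnormal{Re}(K(t))$. The only differences are cosmetic --- the paper uses the parallelogram law where you use Cauchy--Schwarz/Young --- and your remarks (i)--(ii) are slightly more explicit than the paper's treatment.
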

\begin{proof} 
Let $v\in C^1([0,T], L^2(G) )  \cap C([0,T],H^{m/2}(G)) .$ The hypothesis   $$v\in  C([0,T],H^{\frac{m}{2}}(G))$$ will be used in order to apply the G\r{a}rding inequality. This fact will be useful later because we will use the G\r{a}rding inequality applied to the operator $\textnormal{Re}(-K(t)).$ So, 
$v\in \textnormal{Dom}(\partial_{\tau}-K(\tau))$ for every $0\leqslant \tau\leqslant T.$ In view of the embedding $H^{m/2}\hookrightarrow L^2(G),$ we also have that  $v\in C([0,T], L^2(G) ).$  Let us define $f(\tau):=Q(\tau)v(\tau),$ with $Q(\tau):=(\partial_{\tau}-K(\tau)),$ for every $0\leqslant \tau\leqslant T.$ Observe that

\begin{align*}
   \frac{d}{dt}\Vert v(t) \Vert^2_{L^2(G)}&= \frac{d}{dt}\left(v(t),v(t)\right)_{L^2(G)}\\&=\left(\frac{d v(t)}{dt},v(t)\right)_{L^2(G)}+\left(v(t),\frac{d v(t)}{dt}\right)_{L^2(G)}\\
   &=\left(K(t)v(t)+f(t),v(t)\right)_{L^2(G)}+\left(v(t),K(t)v(t)+f(t)\right)_{L^2(G)}\\
    &=\left((K(t)+K(t)^{*})v(t),v(t)\right)_{L^2(G)}+2\textnormal{Re}(f(t), v(t))_{L^2(G)}\\
     &=2\textnormal{Re}(K(t)v(t),v(t))_{L^2(G)}+2\textnormal{Re}(f(t), v(t))_{L^2(G)}.
\end{align*}Now, from the  G\r{a}rding inequality, 
\begin{align}\label{applying :Garding}
    \textnormal{Re}(-K(t)v(t),v(t)) \geqslant C_1\Vert v(t)\Vert_{H^{\frac{m}{2}}(G)}-C_2\Vert v(t)\Vert_{L^2(G)}^2,
\end{align}and  the parallelogram law, we have
\begin{align*}
 2\textnormal{Re}(f(t), v(t))_{L^2(G)}&\leqslant 2\textnormal{Re}(f(t), v(t))_{L^2(G)}+\Vert f(t)\Vert_{L^2(G)}^2+\Vert v(t)\Vert_{L^2(G)}^2 \\
 &= \Vert f(t)+v(t)\Vert^2\leqslant \Vert f(t)+v(t)\Vert^2+\Vert f(t)-v(t)\Vert^2 \\
&= 2\Vert f(t)\Vert^2_{L^2(G)}+2\Vert v(t)\Vert^2_{L^2(G)},
\end{align*}
and consequently
\begin{align*}
   & \frac{d}{dt}\Vert v(t) \Vert^2_{L^2(G)}\\
   &\leqslant 2\left(C_2\Vert v(t)\Vert_{L^2(G)}^2-C_1\Vert v(t)\Vert_{H^{\frac{m}{2}}(G)}\right)+2\Vert f(t)\Vert^2_{L^2(G)}+2\Vert v(t)\Vert^2_{L^2(G)}.
\end{align*}  So, we have proved that
\begin{align*}
   \frac{d}{dt}\Vert v(t) \Vert^2_{L^2(G)}\lesssim  \Vert f(t)\Vert^2_{L^2(G)}+\Vert v(t)\Vert^2_{L^2(G)}.
\end{align*}By using Gronwall's Lemma we obtain the energy estimate
\begin{equation}
\Vert v(t) \Vert^2_{L^2(G)}\leqslant  \left(C\Vert v(0) \Vert_{L^2(G)}^2+C'\int\limits_{0}^T \Vert f(\tau) \Vert_{L^2(G)}^2d\tau \right),   
\end{equation}for every $0\leqslant t\leqslant T,$ and $T>0.$
To finish the proof, we can change the analysis above with $v(T-\cdot)$ instead of $v(\cdot),$ $f(T-\cdot)$ instead of $f(\cdot)$ and $Q^{*}=-\partial_{t}-K(t)^{*},$ (or equivalently $Q=\partial_{t}-K(t)$ ) instead of $Q^{*}=-\partial_{t}+K(t)^{*}$ (or equivalently $Q=\partial_{t}-K(t)$) using that $\textnormal{Re}(K(T-t)^*)=\textnormal{Re}(K(T-t))$ to deduce that
\begin{align*}
&\Vert v(T-t) \Vert^2_{L^2(G)}\\
&\leqslant \left(C\Vert v(T) \Vert^2_{L^2(G)}+C'\int\limits_{0}^{T} \Vert (-\partial_{t}+K(T-t)^{*})v(T-\tau) \Vert^2_{L^2(G)}d\tau \right)\\
&=   \left(C\Vert v(T) \Vert^2_{L^2(G)}+C'\int\limits_{0}^T \Vert (-\partial_{t}-K(t)^{*})v(s) \Vert^2_{L^2(G)}ds \right).
\end{align*}So, we conclude the proof.
\end{proof}
Now, we will prove the existence and uniqueness for \eqref{PVI} when the initial data are taken in $L^2(G).$
\begin{theorem}\label{Main:Th}
Let $K(t)=K(t,x,D)\in \Psi^{m}_{\rho,\delta}(G\times \widehat{G}),$ $0\leqslant \delta<\rho\leqslant  1,$  be a  pseudo-differential operator of order $m>0,$ such that: 
\begin{itemize}
    \item $\textnormal{Re}(K(t))$ is  elliptic  for every  $t\in[0,T]$ with $T>0.$
    \item The symbol $\sigma_{K(t)}(x,\pi)$ of $K(t)$ satisfies the operator inequality $$\sigma_{K(t)}(x,\pi)\geqslant C_0\pi(\mathcal{M})^{m}, \pi \in \widehat{G},$$ for some $C_0>0.$  
\end{itemize}
Let   $u_0\in L^2(G)$, and let $f\in L^2([0,T],L^2(G)).$ Then there exists a unique $v\in C^1([0,T], L^2(G) )  \cap C([0,T],H^{m/2}(G)) $ solving \eqref{PVI}. Moreover, $v$ satisfies the energy estimate
\begin{equation}
\Vert v(t) \Vert^2_{L^2(G)}\leqslant  \left(C\Vert u_0 \Vert^2_{L^2(G)}+C'\Vert f \Vert^2_{L^2([0,T],L^2(G))} \right),
\end{equation}for every $0\leqslant t\leqslant T.$
\end{theorem}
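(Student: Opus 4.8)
The plan is to establish existence via a standard functional-analytic duality/Galerkin argument built on the energy estimates of Proposition \ref{energyestimate}, and uniqueness directly from the first energy estimate. First I would set up the functional framework: let $Q(t) = \partial_t - K(t)$ acting on the space $\mathcal{V} := C^1([0,T],L^2(G)) \cap C([0,T],H^{m/2}(G))$, and let $Q(t)^* = -\partial_t - K(t)^*$ be the formal adjoint. The energy estimate \eqref{Q*} in Proposition \ref{energyestimate}, applied to test functions $w$ vanishing at $t=T$ (so the boundary term drops), gives the a priori bound
\begin{equation*}
\Vert w(t)\Vert_{L^2(G)}^2 \leqslant C' \int_0^T \Vert Q(\tau)^* w(\tau)\Vert_{L^2(G)}^2\, d\tau, \qquad 0 \leqslant t \leqslant T,
\end{equation*}
which upon integration in $t$ yields $\Vert w\Vert_{L^2([0,T],L^2(G))} \leqslant C'' \Vert Q^* w\Vert_{L^2([0,T],L^2(G))}$. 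This is exactly the kind of inequality that, via the Hahn--Banach theorem applied on the range of $Q^*$ inside $L^2([0,T],L^2(G))$, produces a weak solution $v \in L^2([0,T],L^2(G))$ of $Q v = f$ with the prescribed initial condition $v(0) = u_0$, for the inhomogeneous data realized through the pairing $\langle f, w\rangle + (u_0, w(0))$.

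Second, I would upgrade the weak solution to the claimed regularity. Once $v$ is known to solve $\partial_t v = K(t) v + f$ in the distributional sense, one reads off $\partial_t v = K(t)v + f$: since $v \in L^2$ in time with values in $L^2(G)$ and, by elliptic regularity for the parabolic problem (bootstrapping with the G\r{a}rding inequality / the a priori estimate applied to difference quotients in $t$), $v \in L^2([0,T],H^{m/2}(G))$, the right-hand side $K(t)v \in L^2([0,T],H^{-m/2}(G))$, so $\partial_t v \in L^2([0,T],H^{-m/2}(G))$; the Lions--Magenes interpolation lemma then gives $v \in C([0,T],L^2(G))$ and, after the standard iteration exploiting that $\mathrm{Re}\,K(t)$ is elliptic of order $m$, the improved regularity $v \in C([0,T],H^{m/2}(G))$ and $v \in C^1([0,T],L^2(G))$. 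At this point the energy estimate of Proposition \ref{energyestimate} applies verbatim to $v$ itself, yielding
\begin{equation*}
\Vert v(t)\Vert_{L^2(G)}^2 \leqslant C\Vert u_0\Vert_{L^2(G)}^2 + C' \Vert f\Vert_{L^2([0,T],L^2(G))}^2
\end{equation*}
for every $t \in [0,T]$, which is the asserted bound.

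Third, uniqueness: if $v_1, v_2$ both solve \eqref{PVI} with the same data, then $w := v_1 - v_2 \in C^1([0,T],L^2(G)) \cap C([0,T],H^{m/2}(G))$ solves $Q(t)w = 0$ with $w(0) = 0$. The first energy estimate of Proposition \ref{energyestimate} gives $\Vert w(t)\Vert_{L^2(G)}^2 \leqslant C\Vert w(0)\Vert_{L^2(G)}^2 + C' \int_0^T \Vert Q(\tau)w(\tau)\Vert_{L^2(G)}^2\, d\tau = 0$ for all $t$, so $w \equiv 0$.

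I expect the main obstacle to be the regularity bootstrap in the second step: the Hahn--Banach argument produces only a very weak solution, and promoting it to $C^1([0,T],L^2(G)) \cap C([0,T],H^{m/2}(G))$ requires carefully combining the G\r{a}rding inequality (Theorem \ref{GardinTheorem}) with the $L^2$-boundedness and composition results of the pseudo-differential calculus (Theorem \ref{calculus}) to control $K(t)v$ in the right Sobolev scale, together with a continuity-in-$t$ argument for the operator family $K(t)$ (which is only assumed continuous in $t$, so Duhamel-type estimates and a density/approximation argument in $t$ are needed). The time-regularity of the symbol family and the fact that $\rho > \delta$ are exactly what make the parabolic smoothing effective here; the homogeneous-group geometry enters only through the already-established calculus and G\r{a}rding inequality, so no new difficulty arises from the non-commutative setting beyond what Theorems \ref{calculus} and \ref{GardinTheorem} already encapsulate.
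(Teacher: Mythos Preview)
Your proposal is correct and follows essentially the same route as the paper: the Hahn--Banach duality argument built on the adjoint energy estimate \eqref{Q*} applied to test functions vanishing at $t=T$, with the linear functional encoding both $f$ and the initial datum $u_0$, followed by Riesz representation to produce $v$, and uniqueness via the direct energy estimate of Proposition~\ref{energyestimate}. Your discussion of the regularity bootstrap (Lions--Magenes, parabolic smoothing via G\r{a}rding) is in fact more explicit than the paper's own treatment, which obtains $v\in L^{2}([0,T],L^{2}(G))$ from Hahn--Banach and does not spell out the passage to $C^{1}([0,T],L^{2}(G))\cap C([0,T],H^{m/2}(G))$; so your identification of that step as the main obstacle is well placed.
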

\begin{proof}
{{We have proved the {\it a priori} estimate in Proposition \ref{energyestimate} for  $$v\in X_{m}:=C^1([0,T], L^2(G) )  \cap C([0,T],H^{m/2}(G)) .$$ Essentially the condition $v\in  C([0,T],H^{m/2}(G)) $ was used when applying the G\r{a}rding inequality in \eqref{applying :Garding}. Now, we will use the Hanh-Banach theorem to deduce that the solution to \eqref{PVI} indeed, lives in $X_{m}.$ }} So, let us denote  $Q=\partial_{t}-K(t),$ and let us introduce the spaces
    \begin{align*}
        E:=\{\phi\in  C^1([0,T], L^2(G) )  \cap C([0,T],H^{m/2}(G)) :\phi(T)=0\},
    \end{align*}
    and $ Q^*E:=\{Q^*\phi:\phi\in E\}.$ Let us define the linear form $\beta\in (Q^*E)'$ by
    \begin{equation*}
    \beta(Q^*\phi):=\int\limits_{0}^T(f(\tau),\phi(\tau))d\tau+\frac{1}{i}(u_0,\phi(0)).
    \end{equation*}
    From \eqref{Q*} we deduce  for every $\phi\in E$ that
    \begin{align*}
    \Vert \phi(t) \Vert^2_{L^2(G)}&\leqslant   \left(C\Vert \phi(T) \Vert^2_{L^2(G)}+C'\int\limits_{0}^T \Vert (\partial_{t}-K(\tau)^{*})\phi(\tau) \Vert^2_{L^2(G)}d\tau \right).
    \end{align*} So, we have
    \begin{align*}
    |\beta(Q^*\phi)|&\leqslant \int\limits_{0}^T\Vert f(\tau)\Vert_{L^2(G)}\Vert \phi(\tau)\Vert_{L^2(G)} d\tau+\Vert u_0\Vert_{{L^2(G)}}\Vert \phi(0)\Vert_{L^2(G)}\\
    &\leqslant \Vert f\Vert_{L^2([0,T],L^2(G))}\Vert \phi\Vert_{L^2([0,T],L^2(G))}+\Vert u_0\Vert_{L^2(G)}\Vert \phi\Vert_{L^2(G)}\\
    &\lesssim_{T} (\Vert f\Vert_{L^2([0,T],L^2(G)}+\Vert u_0\Vert_{L^2(G)})\Vert Q^*\phi(\tau) \Vert_{L^2([0,T],L^2(G))},
    \end{align*}which shows that $\beta$ is a bounded functional on $\mathscr{T}:=Q^*E\cap L^2([0,T],L^2(G)),$ with $\mathscr{T}$ endowed with the topology induced by the  norm of $L^2([0,T],L^2(G)).$ By using the Hahn-Banach extension theorem, we can extends $\beta$ to a bounded functional $\tilde{\beta}$ on $L^2([0,T],L^2(G)),$ and by using the Riesz representation theorem, there exists $v\in (L^2([0,T],L^2(G))'=L^2([0,T],L^2(G)) ,$ such that
    \begin{equation*}
      \tilde{\beta}(\psi)=(v,\psi),\quad \psi\in   L^2([0,T],L^2(G)).
    \end{equation*}In particular, for $\psi=Q^*\phi\in \mathscr{T},$ we have
    \begin{equation*}
      \tilde{\beta}(Q^* \phi)={\beta}(Q^*\phi)=(v,Q^*\phi),
    \end{equation*}  Because, we can identify $C^{\infty}_{0}((0,T),\mathscr{D}(G))$ as a subspace of $E$ 
    \begin{equation*}
       C^{\infty}_{0}((0,T),C^\infty(G))\subset E=\{\phi\in  C^1([0,T], L^2(G) )  \cap C([0,T],H^{m/2}(G)) :\phi(T)=0\},
    \end{equation*} we have the identity
    \begin{align*}
      (f,\phi)=  \int\limits_{0}^T(f(\tau),\phi(\tau))d\tau= \int\limits_{0}^T(f(\tau),\phi(\tau))d\tau+\frac{1}{i}(u_0,\phi(0))=(v,Q^*\phi),
    \end{align*}for every $\phi \in C^{\infty}_{0}((0,T),C^\infty(G)). $ So, this implies that $v\in \textnormal{Dom}(Q^{**}).$ Because $Q^{**}=Q,$ we have that
    \begin{align*}
        (v,Q^*\phi)=(Qv,\phi)=(f,\phi),\,\,\forall\phi \in C^{\infty}_{0}((0,T),C^\infty(G)),
    \end{align*}which implies that $Qv=f.$ A routine argument of integration by parts shows that $v(0)=u_0.$ Now, in order to show the uniqueness of $v,$ let us assume that $u\in  C^1([0,T], L^2(G) )  \cap C([0,T],H^{m/2}(G))$ is a solution of the problem
    \begin{equation*} \begin{cases}\frac{\partial u}{\partial t}=K(t,x,D)u+f ,& \text{ }u\in \mathscr{D}'((0,T)\times G),
\\u(0)=u_0 .& \text{ } \end{cases}
\end{equation*} Then $\omega:=v-u\in C^1([0,T], L^2(G) )  \cap C([0,T],H^{m/2}(G))$ solves the problem
\begin{equation*} \begin{cases}\frac{\partial \omega}{\partial t}=K(t,x,D)\omega,& \text{ }\omega\in \mathscr{D}'((0,T)\times G),
\\\omega(0)=0 ,& \text{ } \end{cases}
\end{equation*}and from Proposition \ref{energyestimate}, $\Vert \omega(t)\Vert_{L^2(G)}=0,$ for all $0\leqslant t\leqslant T,$ and consequently, from the continuity in $t$ of the functions we have that $v(t,x)=u(t,x)$ for all $t\in [0,T]$ and a.e. $x\in G.$
\end{proof}
Now, we extend Theorem \ref{Main:Th} to the case where the initial data are considered in general Sobolev spaces (we consider the additional hypothesis that $K(t)$ commutes with  $\mathcal{R}$).

\begin{theorem}\label{Main:Th:Sob}
Let $K(t)=K(t,x,D)\in \Psi^{m}_{\rho,\delta}(G\times \widehat{G}),$ $0\leqslant \delta<\rho\leqslant  1,$  be a  pseudo-differential operator of order $m>0,$ such that: 
\begin{itemize}
    \item $\textnormal{Re}(K(t))$ is  elliptic  for every  $t\in[0,T]$ with $T>0.$
    \item The symbol $\sigma_{K(t)}(x,\pi)$ of $K(t)$ satisfies the operator inequality $$\sigma_{K(t)}(x,\pi)\geqslant C_0\pi(\mathcal{M})^{m}, \pi \in \widehat{G},$$ for some $C_0>0.$ 
    \item $K(t)$ commutes with the Rockland operator $\mathcal{R}.$
\end{itemize} Let $s\in \mathbb{R},$    $u_0\in H^s(G)$, and let $f\in L^2([0,T],H^s(G)).$ Then there exists a unique $v\in C^1([0,T], H^s(G) )  \cap C([0,T],H^{m/2+s}(G)) $ solving \eqref{PVI}. Moreover, $v$ satisfies the energy estimate
\begin{equation}
\Vert v(t) \Vert^2_{H^s(G)}\leqslant  \left(C\Vert u_0 \Vert^2_{H^s(G)}+C'\Vert f \Vert^2_{L^2([0,T],H^s(G))} \right),
\end{equation}for every $0\leqslant t\leqslant T.$
\end{theorem}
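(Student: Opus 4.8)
The plan is to reduce the Sobolev-data statement to the $L^2$-case already proved in Theorem \ref{Main:Th} by conjugating the Cauchy problem with the Riesz potential $\mathcal{M}^s=(1+\mathcal{R})^{\frac{s}{\nu}}$. Recall that $\mathcal{M}^{s}:H^{r}(G)\to H^{r-s}(G)$ is an isometric isomorphism for every $r\in\mathbb{R}$; in particular it maps $H^{s}(G)$ isomorphically onto $L^{2}(G)$ and $H^{m/2+s}(G)$ onto $H^{m/2}(G)$, and the same holds for $\mathcal{M}^{-s}$ in the opposite direction. Moreover, since by hypothesis $K(t)$ commutes with $\mathcal{R}$, the spectral calculus of the positive self-adjoint operator $\mathcal{R}$ shows that $K(t)$ commutes with every $\mathcal{M}^{\tau}$, $\tau\in\mathbb{R}$; in particular $\mathcal{M}^{s}K(t)\mathcal{M}^{-s}=K(t)$ \emph{exactly}, not merely modulo lower-order terms.

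Next, given $u_{0}\in H^{s}(G)$ and $f\in L^{2}([0,T],H^{s}(G))$, I would set $\tilde u_{0}:=\mathcal{M}^{s}u_{0}\in L^{2}(G)$ and $\tilde f:=\mathcal{M}^{s}f\in L^{2}([0,T],L^{2}(G))$, and consider the problem $\partial_{t}w=K(t)w+\tilde f$, $w(0)=\tilde u_{0}$. The operator $K(t)$ appearing here is literally the one in the hypothesis, so $\textnormal{Re}(K(t))$ is elliptic for each $t\in[0,T]$ and its symbol still satisfies $\sigma_{K(t)}(x,\pi)\geqslant C_{0}\pi(\mathcal{M})^{m}$. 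Hence Theorem \ref{Main:Th} applies verbatim and produces a unique
$$
w\in C^{1}([0,T],L^{2}(G))\cap C([0,T],H^{m/2}(G))
$$
solving this conjugated problem, together with the energy estimate $\Vert w(t)\Vert_{L^{2}(G)}^{2}\leqslant C\Vert\tilde u_{0}\Vert_{L^{2}(G)}^{2}+C'\Vert\tilde f\Vert_{L^{2}([0,T],L^{2}(G))}^{2}$ for all $0\leqslant t\leqslant T$.

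Then I would define $v:=\mathcal{M}^{-s}w$. By the mapping properties of $\mathcal{M}^{-s}$, $v\in C^{1}([0,T],H^{s}(G))\cap C([0,T],H^{m/2+s}(G))$. Applying $\mathcal{M}^{-s}$ to the equation for $w$ and using $\mathcal{M}^{-s}K(t)=K(t)\mathcal{M}^{-s}$, one gets $\partial_{t}v=K(t)v+f$, while $v(0)=\mathcal{M}^{-s}\tilde u_{0}=u_{0}$, so $v$ solves \eqref{PVI}. Since $\Vert v(t)\Vert_{H^{s}(G)}=\Vert w(t)\Vert_{L^{2}(G)}$, $\Vert\tilde u_{0}\Vert_{L^{2}(G)}=\Vert u_{0}\Vert_{H^{s}(G)}$ and $\Vert\tilde f\Vert_{L^{2}([0,T],L^{2}(G))}=\Vert f\Vert_{L^{2}([0,T],H^{s}(G))}$, the claimed energy estimate follows immediately. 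Uniqueness is inherited the same way: if $v_{1},v_{2}$ both solve \eqref{PVI} in $C^{1}([0,T],H^{s}(G))\cap C([0,T],H^{m/2+s}(G))$, then $\mathcal{M}^{s}v_{1}$ and $\mathcal{M}^{s}v_{2}$ both lie in $C^{1}([0,T],L^{2}(G))\cap C([0,T],H^{m/2}(G))$ and solve the conjugated problem, so the uniqueness part of Theorem \ref{Main:Th} forces $\mathcal{M}^{s}v_{1}=\mathcal{M}^{s}v_{2}$, hence $v_{1}=v_{2}$.

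The only genuinely delicate point — and the sole place where the additional hypothesis is used — is the exact intertwining $\mathcal{M}^{s}K(t)\mathcal{M}^{-s}=K(t)$. Without the commutativity of $K(t)$ with $\mathcal{R}$ one would only have $\mathcal{M}^{s}K(t)\mathcal{M}^{-s}=K(t)+R_{s}(t)$ with $R_{s}(t)$ of order $m-(\rho-\delta)$, and then one would be forced to check that the operator inequality $\sigma_{K(t)}(x,\pi)\geqslant C_{0}\pi(\mathcal{M})^{m}$ (and the ellipticity of the real part) survives such a lower-order perturbation uniformly in $t$, which is not automatic. The commutativity assumption bypasses this entirely; everything else is a routine transfer through the isomorphism $\mathcal{M}^{s}$.
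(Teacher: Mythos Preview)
Your proposal is correct and follows essentially the same approach as the paper: reduce to the $L^2$-case of Theorem~\ref{Main:Th} by conjugating with $\mathcal{M}^{s}=(1+\mathcal{R})^{s/\nu}$ and using that the commutativity of $K(t)$ with $\mathcal{R}$ makes this conjugation exact. If anything, your write-up is slightly more careful than the paper's, since you construct the solution explicitly as $v=\mathcal{M}^{-s}w$ and spell out the uniqueness argument, whereas the paper begins by assuming a solution $v$ exists and then shows $(1+\mathcal{R})^{s/\nu}v$ solves the $L^2$-problem.
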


\begin{proof}Observe that if $v$ is a solution for the Cauchy problem
\begin{equation*} \begin{cases}\frac{\partial v}{\partial t}=K(t,x,D)v+f ,& \text{ }v\in \mathscr{D}'((0,T)\times G),
\\v(0)=u_0 ,& \text{ } \end{cases}
\end{equation*}where the initial data $u_0\in H^{s}(G),$ $K(t):=K(t,x,D)\in \Psi^{m}_{\rho,\delta}(G\times \widehat{G}),$ $f\in  L^2([0,T],H^s(G)) ,$ then using the commutativity of $K(t)$ with $\mathcal{R},$ (and hence with its spectral calculus) we have
\begin{align*}
    \frac{\partial }{\partial t}(1+\mathcal{R})^{\frac{s}{\nu}} v &= (1+\mathcal{R})^{\frac{s}{\nu}} \frac{\partial v}{\partial t} =(1+\mathcal{R})^{\frac{s}{\nu}}K(t,x,D)v+(1+\mathcal{R})^{\frac{s}{\nu}}f\\
    &=K(t,x,D)(1+\mathcal{R})^{\frac{s}{\nu}}
v+(1+\mathcal{R})^{\frac{s}{\nu}}f,
\end{align*}showing that $v':= (1+\mathcal{R})^{\frac{s}{\nu}}
v\in L^2(G)$ and $f':=(1+\mathcal{R})^{\frac{s}{\nu}}f\in L^2([0,T],L^2(G))$ satisfy the initial boundary value problem
\begin{equation*} \begin{cases}\frac{\partial v'}{\partial t}=K(t,x,D)v'+f' ,& \text{ }v'\in \mathscr{D}'((0,T)\times G),
\\v'(0)=u'_0:=(1+\mathcal{R})^{\frac{s}{\nu}}u_0\in L^2(G). & \text{ } \end{cases}
\end{equation*}So, by applying Proposition \ref{Main:Th} we get the estimate
\begin{equation}\label{aux:sec:2}
\Vert v'(t) \Vert^2_{L^2(G)}\leqslant  \left(C\Vert u'_0 \Vert^2_{L^2(G)}+C'\Vert f' \Vert^2_{L^2([0,T],L^2(G))} \right),
\end{equation}for every $0\leqslant t\leqslant T.$ Note that by applying Proposition \ref{Main:Th} we have  that $v'\in C([0,T],H^m(G))$ which implies that $v\in C([0,T],H^{m+s}(G)).$   Finally, observe that \eqref{aux:sec:2} is equivalent to the following energy inequality
\begin{equation}\label{aux:sec}
\Vert v(t) \Vert^2_{H^s(G)}\leqslant  \left(C\Vert u_0 \Vert^2_{H^s(G)}+C'\Vert f \Vert^2_{H^s([0,T],L^2(G))} \right),
\end{equation}for every $0\leqslant t\leqslant T,$ proving Theorem \ref{Main:Th:Sob}.
\end{proof}

\noindent{\textbf{Acknowledgement.}} The authors thank David Rottensteiner for discussions.

\bibliographystyle{amsplain}

\end{document}